\newtheoremstyle{BBstyle0}  {}{}{\itshape}{}{\bfseries}{}{6pt}{}
\newtheoremstyle{BBstyle1}  {3pt}{3pt}{\rmfamily}{}{\itshape}{: }{3pt}{}
\newtheoremstyle{BBstyle2}  {3pt}{3pt}{\itshape}{}{\bfseries\large}{}{0pt}{}
\newtheoremstyle{BBstyle3}  {}{}{\itshape}{}{\bfseries}{: }{3pt}{}
\newtheoremstyle{BBstyle4}  {}{}{\rmfamily}{}{\bfseries}{}{6pt}{}
\newtheorem{thm}{Theorem}
\newtheorem{lem}{Lemma}
\newtheorem{prop}{Proposition}
\newtheorem{cor}{Corollary}
\newtheorem{ass}{Assumption}
\theoremstyle{definition}
\newtheorem{exa}{Example}
\newcommand{\pa}[1]{\left({#1}\right)}
\newcommand{\norm}[1]{\left\|{#1}\right\|}
\newcommand{\cro}[1]{\left[{#1}\right]}
\newcommand{\ab}[1]{\left|{#1}\right|}
\newcommand{\ac}[1]{\left\{{#1}\right\}}
\newcommand{\argmax}{\mathop{\rm argmax}}
\newcommand{\Card}{\mathop{\rm Card}\nolimits}
\newcommand{\dfleche}[1]{\,\displaystyle{\mathop{\longrightarrow}_{#1}}\,}
\newcommand{\CV}[1]{\dfleche{#1}}
\newcommand{\E}{{\mathbb{E}}}
\renewcommand{\L}{{\mathbb{L}}}
\newcommand{\N}{{\mathbb{N}}}
\renewcommand{\P}{{\mathbb{P}}}
\newcommand{\R}{{\mathbb{R}}}
\newcommand{\sB}{{\mathscr{B}}}
\newcommand{\sC}{{\mathscr{C}}}
\newcommand{\sE}{{\mathscr{E}}}
\newcommand{\sF}{{\mathscr{F}}}
\newcommand{\sG}{{\mathscr{G}}}
\newcommand{\sL}{{\mathscr{L}}} 
\newcommand{\sM}{{\mathscr{M}}}
\newcommand{\sP}{{\mathscr{P}}}
\newcommand{\sQ}{{\mathscr{Q}}} 
\newcommand{\sT}{{\mathscr{T}}}
\newcommand{\sW}{{\mathscr{W}}}
\newcommand{\sX}{{\mathscr{X}}}
\newcommand{\sY}{{\mathscr{Y}}} 
\newcommand{\sZ}{{\mathscr{Z}}}
\DeclareMathAlphabet{\mathscrbf}{OMS}{mdugm}{b}{n}
\newcommand{\sbP}{{\mathscrbf{P}}}
\newcommand{\sbQ}{{\mathscrbf{Q}}}
\newcommand{\cB}{{\mathcal{B}}}
\newcommand{\cC}{{\mathcal{C}}}
\newcommand{\cE}{{\mathcal{E}}}
\newcommand{\cF}{{\mathcal{F}}}
\newcommand{\cH}{{\mathcal{H}}}
\newcommand{\cN}{{\mathcal{N}}}
\newcommand{\cP}{{\mathcal{P}}}
\newcommand{\cR}{{\mathcal{R}}}
\newcommand{\cT}{{\mathcal{T}}}
\newcommand{\cU}{{\mathcal{U}}}
\newcommand{\cV}{{\mathcal{V}}}
\newcommand{\cW}{{\mathcal{W}}}
\newcommand{\cX}{{\mathcal{X}}}
\newcommand{\cY}{{\mathcal{Y}}}
\newcommand{\cbP}{\boldsymbol{\mathcal{P}}}
\newcommand{\gh}{{\mathbf{h}}}
\newcommand{\gp}{{\mathbf{p}}}
\newcommand{\gx}{{\mathbf{x}}}
\newcommand{\gA}{{\mathbf{A}}}
\newcommand{\gP}{{\mathbf{P}}}
\newcommand{\gQ}{{\mathbf{Q}}} 
\newcommand{\gR}{{\mathbf{R}}}
\newcommand{\gT}{{\mathbf{T}}}
\newcommand{\bs}[1]{\boldsymbol{#1}}
\newcommand{\bsG}{{\bs{G}}}
\newcommand{\bsS}{{\bs{S}}} 
\newcommand{\bsT}{{\bs{T}}}
\newcommand{\bsX}{{\bs{X}}}
\newcommand{\geta}{\bs{\eta}}
\newcommand{\ggamma}{\bs{\gamma}}
\newcommand{\gGamma}{\bs{\Gamma}}
\newcommand{\gmu}{{\bs{\mu}}}
\newcommand{\gtheta}{{\bs{\theta}}}
\newcommand{\gup}{\bs{\upsilon}}
\newlist{lista}{enumerate}{1}
\setlist[lista,1]{label=\alph*),ref=\alph*)}
\newlist{listi}{enumerate}{1}
\setlist[listi,1]{label=(\roman*),ref=(\roman*),align=left}
\newcommand{\eref}[1]{(\ref{#1})}
\newcommand{\eqd}{\stackrel{\mathrm{def}}{=}}
\renewcommand{\ge}{\geqslant}
\renewcommand{\le}{\leqslant}
\newcommand{\1}{1\hskip-2.6pt{\rm l}}
\newcommand{\<}{{\langle}}
\renewcommand{\>}{{\rangle}}
\newcommand{\etc}[1]{#1_1,\ldots,#1_n}
\newcommand{\Etc}[2]{#1_1,\ldots,#1_{#2}}
\newcommand{\st}{\strut}
\newcommand{\on}{^{\otimes n}}
\newcommand{\et}{^{\star}}
\newcommand{\eps}{{\varepsilon}}
\def\bst{\bs{\theta}}
\def\bsT{{\overline{\bs{\Theta}}}}
\def\bsTT{{\bs{\Theta}}}
\def\bsb{{\bs{\beta}}}
\def\bsg{{\ggamma}}
\def\bsG{{\overline{\bs{\Gamma}}}}
\begin{document}
\title[Robust estimation in exponential families]{Robust estimation of a regression function in Exponential Families}
\author{Yannick BARAUD}
\author{Juntong CHEN}
\address{\parbox{\linewidth}{Department of Mathematics (DMATH),\\
University of Luxembourg\\
Maison du nombre\\
6 avenue de la Fonte\\
L-4364 Esch-sur-Alzette\\
Grand Duchy of Luxembourg}}
\email{yannick.baraud@uni.lu}
\email{juntong.chen@uni.lu}

\keywords{Generalized linear model, logit (logistic) regression, Poisson regression, robust estimation, supremum of an empirical process}
\subjclass[2010]{Primary 62J12, 62F35, 62G35, 62G05; Secondary 60G99}
\thanks{This project has received funding from the European Union's Horizon 2020 research and innovation programme under grant agreement N\textsuperscript{o} 811017}
\date{\today}
\begin{abstract}
We observe $n$ pairs of independent {(but not necessarily i.i.d.)} random variables $X_{1}=(W_{1},Y_{1}),\ldots,X_{n}=(W_{n},Y_{n})$ and tackle the problem of estimating the conditional distributions $Q_{i}\et(w_{i})$ of $Y_{i}$ given $W_{i}=w_{i}$ for all $i\in\{1,\ldots,n\}$. Even though these might not be true, we base our estimator on the assumptions that the data are i.i.d.\ and the conditional distributions of $Y_{i}$ given $W_{i}=w_{i}$ belong to a one parameter exponential family $\overline \sQ$ with parameter space given by an interval $I$. More precisely, we pretend that these conditional distributions take the form  $Q_{\bst(w_{i})}\in \overline \sQ$ for some $\bst$ that belongs to a VC-class $\bsT$ of functions  with values in $I$. For each $i\in\{1,\ldots,n\}$, we estimate $Q_{i}\et(w_{i})$ by a distribution of the same form, i.e.\ $Q_{\widehat\bst(w_{i})}\in \overline \sQ$, where  $\widehat \bst=\widehat \bst(X_{1},\ldots,X_{n})$ is a well-chosen estimator with values in $\bsT$. We establish non-asymptotic exponential inequalities for the upper deviations of a Hellinger-type distance between the true conditional distributions of the data and the estimated one based on the exponential family $\overline \sQ$ and the class of functions $\bsT$. We show that our estimation strategy is robust to model misspecification, contamination and the presence of outliers. Besides, when the data are truly i.i.d., the exponential family $\overline \sQ$ suitably parametrized and the conditional distributions $Q_{i}\et(w_{i})$ of the form $Q_{\bst\et(w_{i})}\in\overline \sQ$ for some unknown H\"olderian function $\bst\et$ with values in $I$, we prove that the estimator $\widehat \bst$ of $\bst\et$ is minimax  (up to a logarithmic factor). Finally, we provide an algorithm for calculating $\widehat \gtheta$ when $\bsT$ is a VC-class of functions of low or {moderate} dimension and we carry out a simulation study to compare the performance of $\widehat \gtheta$ to that of the MLE and median-based estimators. The proof of our main result relies on an upper bound, with explicit numerical constants, on the expectation of the supremum of an empirical process over a VC-subgraph class. This bound can be of independent interest. 
\end{abstract}

\maketitle

\section{Introduction}
In order to motivate the statistical problem we wish to solve here, let us start with a preliminary example.
\begin{exa}[Logit regression]\label{exa-C}
We study a cohort of $n$ patients with respective clinical characteristics $W_{1},\ldots,W_{n}$ with values in $\R^{d}$. For the sake of simplicity we shall assume that $d$ is small compared to $n$ even though this situation might not be the practical one.  We associate the label $Y_{i}=1$ to the patient $i$ if {she/he} develops the disease $D$ and $Y_{i}=-1$ otherwise. {The effect of the clinical characteristic $W$ on the probability of developing the disease $D$ is given by the conditional distribution of $Y$ given $W$: $\P\cro{Y=y|W=w}$ which is the quantity we want to estimate. A classical model for it  is the logit one given by
\begin{equation}
\P\cro{Y=y|W=w}=\frac{1}{1+\exp\cro{-y\left<w\et, w\right>}}\in (0,1)\quad \text{for $y\in\{-1,+1\}$},
\label{mod-LRb}
\end{equation}
}where $w\et$ is an unknown vector  and $\<\cdot,\cdot\>$ the  inner product of $\R^{d}$. If we assume that this model is true, the problem {amounts} to estimate $w\et$ on the basis of the observations $(W_{i},Y_{i})$ for $i\in\{1,\ldots,n\}$. 
\end{exa}
A common way of solving this problem is to use the Maximum Likelihood Estimator (MLE for short). In exponential families, the MLE is known to enjoy many nice properties but it also suffers from several defects. First of all, it is not difficult to see that it might not exist. This is in particular the case when a hyperplane separates the two subsets of $\R^{d}$ given by $\cW_{+}=\{W_{i},\, Y_{i}=+1\}$ and $\cW_{-}=\{W_{i},\, Y_{i}=-1\}$, i.e.\  when there exists a unit vector $w_{0}\in\R^{d}$ such that $\<w,w_{0}\>>0$ for all $w\in\cW_{+}$ and  $\<w,w_{0}\><0$ for $w\in\cW_{-}$. In this case, the conditional likelihood function at $\lambda w_{0}$ with $\lambda>0$ can be written as
\[
\prod_{i=1}^{n}\frac{1}{1+\exp\cro{-\lambda Y_{i}\left<w_{0}, W_{i}\right>}}=\prod_{i=1}^{n}\frac{1}{1+\exp\cro{-\lambda \ab{\left<w_{0}, W_{i}\right>}}}\CV{\lambda\to +\infty} 1,
\]
hence the maximal value 1 is not reached. For a thorough study of the existence of the MLE in the logit model we refer to Cand\`es and Sur~\citeyearpar{Cand_s_2020} as well as the references therein. 

Another issue with the use of the MLE lies in the fact that it is not robust and we {shall} illustrate its instability in our simulation study. Robustness is nevertheless an important property in practice since, {going back to Example~\ref{exa-C}}, it may happen that our database contains a few corrupted data that correspond to mislabelled patients  (some patients might have developed a disease which is not $D$ but has similar symptoms) {or that the relation (\ref{mod-LRb}) is only approximately true.} A natural question arises: how can we provide a suitable estimation of {$\P\cro{Y=y|W=w}$} despite the presence of {possibly} corrupted data {or a slight misspecification of the model?}

This is the kind of issue we want to solve here. Our approach is not, however, restricted to the logit model but applies more generally whenever the conditional distribution of $Y$ given $W$ belongs to a one-parameter exponential family. {More precisely, we shall work within the following statistical framework. We observe $n$ pairs of independent, typically non  i.i.d., random variables $X_{1}=(W_{1},Y_{1}),\ldots,X_{n}=(W_{n},Y_{n})$ (with $W_{i}\in\sW$ and $Y_{i}\in\sY$ for $i\in\{1,\ldots,n\}$) and we want to estimate the $n$ conditional distributions $Q_{i}\et(w_{i})$ of $Y_{i}$ when $W_{i}=w_{i}$, $i\in\{1,\ldots,n\}$, without any information about the distributions $P_{W_{i}}$ of the variables $W_{i}$ which are unknown and can be completely arbitrary. In order to do so, we introduce a statistical model for the $Q_{i}\et(w_{i})$. We start from an exponential family $\{Q_{\theta},\, \theta\in I\}$ where $I$ is an interval of $\R$ and consider the family of conditional distributions $\{Q_{\bst(w)}, \bst\in\bsT\}$ where $\bsT$ is a given set of functions from $\sW$ to $I$. This provides a model for the $n$ conditional distributions $Q_{i}\et(w_{i})$ if we pretend that $Q_{i}\et(w_{i})$ takes the form $Q_{\bst\et(w_{i})}$ for some $\bst\et\in\bsT$, i.e.
\begin{equation}
Q_{i}\et(w_{i})=Q_{\theta_{i}\et}\quad\text{with}\quad\theta_{i}\et=\bst\et(w_{i})\quad\text{for }
i\in\{1,\ldots,n\}.
\label{eq-model}
\end{equation}
We shall do as if (\ref{eq-model}) were true although we do not assume it. We merely hope that the set  of conditional distributions $Q_{\theta_{i}\et}$ induced by a suitable element $\bst\et$ of $\bsT$ provides a reasonably good approximation for the true conditional distributions $Q_{i}\et(w_{i})$. Any estimator $\widetilde{\bst}$ of $\bst\et$ leads, by an application of (\ref{eq-model}), to an estimator $Q_{\widetilde{\bst}(w_{i})}$ of the conditional distribution $Q_{i}\et(w_{i})$. 
We measure the risk of such an estimator by a Hellinger-type distance between the conditional distributions $Q_{i}\et(w_{i})$ and their estimators, integrated with respect to the probabilities 
$P_{W_{i}}$ (to be defined in the next section).}

{Given the model indexed by the elements of $\bsT$, instead of estimating $\bst\et$ by the maximum likelihood method as is commonly done, we use for this a $\rho$-estimator $\widehat \gtheta$, whose definition and perfiormance are described in great details in Baraud {\em et al.}~\citeyearpar{MR3595933} and Baraud and Birg\'e ~\citeyearpar{BarBir2018}. The purpose of this replacement is to avoid various drawbacks connected to the use of the MLE:\\
--- It may not exist;\\
--- It is typically difficult to evaluate its performance in a non-asymptotic framework and its analysis generally requires some knowledge or restrictions about the distributions of the $W_{i}$;\\
--- Its performance may be very bad when the model is not exact (misspecification, presence of outliers, contamination, etc.) as demonstrated by our simulations in Section~\ref{sect-ST}.}

{On the contrary a $\rho$-estimator always exists and it enjoys the following properties. \\
\begin{itemize}
\item When the parameter set $\bsT$ is VC-subgraph with VC-dimension $V$, the non-aymptotic risk of $\widehat \bst$ is bounded by the sum of two terms : an approximation term reflecting the distance between the model and the truth and an estimation term corresponding to the risk bound one would get if the model were true. Moreover, this second term only depends on $V$. This risk bound involves explicit constants and  holds under the only assumption that the data $(W_{1},Y_{1}),\ldots,(W_{n},Y_{n})$ are independent;
\item  the estimator $\widehat \bst$ still performs well when the function $\bst\et$ does not belong to $\bsT$ but lies close enough to it; 
\item  the estimator is robust: its performance remains stable when the data set $(W_{1},Y_{1}),\ldots,(W_{n},Y_{n})$ is contaminated or contains outliers or when the statistical model based the exponential family is only approximately correct.
\item  when the model is exact, the exponential family $\{Q_{\theta},\, \theta\in I\}$ is suitably parametrized and $\bsT$ is a H\"olderian class of smoothness, the estimator $\widehat \bst$ is rate optimal (up to a logarithmic factor).
\end{itemize}
}

The work presented here is different from the study of $\rho$-estimators conducted in Baraud and Birg\'e~\citeyearpar{BarBir2018}[Section~9] for estimating a regression function (seen as the parameter of interest in the conditional distribution of $Y$ given $W$). In Baraud and Birg\'e~\citeyearpar{BarBir2018}, the authors studied a regression model in which the errors are assumed to be i.i.d.,\ homoscedastic with a density with respect to the Lebesgue measure. In the present paper, the errors are typically heteroscedastic, independent but not i.i.d.\ and they may not admit a density with respect {to} the Lebesgue measure. This is the case in the logistic and Poisson regression settings for example. Actually, new results had to be established in order to analyze further the behaviour of $\rho$-estimators in the statistical setting we consider here. The proof of our main result combines the theory of $\rho$-estimation --- see Baraud {\em et al.}~\citeyearpar{MR3595933} and Baraud and Birg\'e ~\citeyearpar{BarBir2018} --- and an original result that establishes the fact that the family of functions on $\sW\times \sY$ of the form
\[
(w,y)\mapsto S(y)\geta(w)-A(\geta(w))\quad \text{with}\quad \geta\in \bs{\Gamma}
\]
is VC-subgraph when $S$ is an arbitrary function on $\sY$, $A$ a convex function defined on an interval $I$ of positive length and $\bs{\Gamma}$ a VC-subgraph class of functions defined on $\sW$ with values in $I$. The proof of our main result also relies on an upper bound with explicit constants (see Theorem~\ref{thm-VCB}) on the expectation of the supremum of an empirical process over a VC-class of functions. Since we are not aware of such a result (with explicit constants) in the literature, this bound can be of independent interest. 

Besides our theoretical guarantees on the performance of the estimator $\widehat \bst$, we carry out a simulation study in order to compare it with the MLE and median-based estimators. The simulation study addresses both the situations where the data are generated from the model and when it is contaminated or contains an outlier. To our knowledge, it is the first time that $\rho$-estimators are implemented numerically and their performance is studied on simulated data. 

There exist only few papers in the literature that tackle the estimation problem {corresponding to the framework described by (\ref{eq-model})} and establish risk bounds for the proposed estimators of $\bst\et$. When $\sW=[0,1]$, Kolaczyk and Nowak~\citeyearpar{MR2158614} proposed an estimation of $\bst\et$ by piecewise polynomials. When the exponential family is given in its canonical form and the natural parameter is a smooth function of the mean, they {build} estimators that achieve, up to extra logarithmic factors, the classical rate $n^{-\alpha/(2\alpha+1)}$ over Besov balls with regularity $\alpha>0$ for a Hellinger-type loss {between the corresponding conditional distributions}. Brown {\em et al.}~\citeyearpar{brown2010} considered one-parameter exponential families which possess the property that the variances of the distributions are quadratic functions of their means. These families include as special cases the binomial, gamma and Poisson distributions, among others, and have been studied earlier by Antoniadis and Sapatinas~\citeyearpar{MR1859411}. When the exponential family is parametrized by its mean, Brown {\em et al.}~\citeyearpar{brown2010} used a variance stabilizing transformation in order to turn the original problem of estimating the function $\bst\et$ into that of estimating a regression function in the homoscedastic Gaussian regression framework. They established uniform rates of convergence with respect to some $\L_{2}$-loss over classes of functions $\bst\et$ that belong to Besov balls and are bounded from above and below by positive numbers. Finally, in the case of the Poisson family parametrized by its mean,  Kroll~\citeyearpar{Kroll-1} proposed an estimator of $\bst\et$ which is based on a  model selection procedure. For some $\L_{2}$-loss, he proved that his estimator achieved the minimax rate of convergence over Sobolev-type ellipsoids. 

A common feature of these papers lies in the fact that they make strong assumptions on the function {$\bst\et$} and the distribution of the covariates $W_{i}$ while our approach does not require any assumption about these quantities. They assume that the distribution of the $W_{i}$ is known or partly known and the  function $\bst\et$ is smooth enough. Besides, none of these papers addresses the problem of model misspecification nor proposes an estimator to solve it.

Our paper is organized as follows. We describe our statistical framework in Section~\ref{sect-1} and present there several examples to which our approach applies. The construction of the estimator and our main result about its risk are presented in Section~\ref{sect-MR}. We  also explain why the deviation inequality we derive guarantees the desired robustness property of the estimator. Uniform risk bounds over H\"olderian classes are established in Section~\ref{sect-URB} {provided that the exponential family involved in the model is suitably parametrized}. We also show that{, without such a suitable parametrization,} the minimax rates may differ from the usual ones established {for an homoscedastic Gaussian regression as described by our Example~\ref{exa-G} below.} Section~\ref{sect-ST} is devoted to the description of our algorithm and the simulation study. Our bound  on the expectation of the supremum of an empirical process over a VC-subgraph class can be found in Section~\ref{sect-EmpProc} as well as its proof. Section~\ref{sect-proof} is devoted to the other proofs. 

\section{The statistical setting}\label{sect-1}
{Let us recall that we} observe $n$ pairs of independent, but not necessarily i.i.d., random variables $X_{1}=(W_{1},Y_{1}),\ldots,X_{n}=(W_{n},Y_{n})$ with values in a measurable product space $(\sX,\cX)=(\sW\times\sY,\cW\otimes\cY)$ {and we assume that,} for each $i\in\{1,\ldots,n\}$, the conditional {distribution} of $Y_{i}$ given {$W_{i}=w_{i}$} exists and is given by the value at {$w_{i}$} of a measurable function $Q_{i}\et$ from $(\sW,\cW)$ to the set $\sT$ of all probabilities on $(\sY,\cY)$. We equip $\sT$ with the Borel $\sigma$-algebra $\cT$ associated to the total variation distance (which induces the same topology as the Hellinger one). {We recall that the Hellinger distance between two probabilities $P=p\cdot \mu$ and $R=r\cdot \mu$  dominated by a measure $\mu$ on a measurable space $(E,\cE)$  is given by  
\[
h(P,R)=\cro{\frac{1}{2}\int_{E}\pa{\sqrt{p}-\sqrt{r}}^{2}d\mu}^{1/2},
\]
the result being independent of the choice of the dominating measure $\mu$. 
With this choice of $\cT$,} the mapping $w\mapsto h^{2}(Q_{i}\et(w),Q)$ on $(\sW,\cW)$ is measurable whatever the probability $Q\in\sT$ and $i\in\{1,\ldots,n\}$.  

Apart from independence of the $W_{i}$, $1\le i\le n$, we assume nothing about their respective distributions $P_{W_{i}}$ which can therefore be arbitrary.

Let $\overline \sQ\subset \sT$ be an exponential family on the measured space $(\sY,\cY,\nu)$ where $\nu$ is an arbitrary $\sigma$-finite (positive) measure. We assume that $\overline \sQ=\{Q_{\theta},\; \theta\in I\}$ is indexed by a natural parameter $\theta$ that belongs to {some interval $I\subset \R$ such that $\mathring{I}\ne \varnothing$}. This means that, for all $\theta\in I$, the distribution $Q_{\theta}$ admits a density (with respect to $\nu$) of the form 
\begin{equation}\label{MEb}
q_{\theta}:y\mapsto e^{S(y)\theta-A(\theta)}\quad \text{with}\quad A(\theta)=\log\cro{\int_{\sY}e^{\theta S(y)}d\nu(y)},
\end{equation}
where $S$ is a real-valued measurable function on $(\sY,\cY)$ which does not coincide with a constant $\nu$-a.e. We also recall  that the function $A$ is infinitely differentiable on $\mathring{I}$ and strictly convex on $I$. It is of course possible to parametrize $\overline \sQ$ in a different way (i.e.\ with a non-natural parameter) by performing a variable change $\gamma=v(\theta)$ where $v$ is a continuous and strictly monotone function on $I$. We shall see in Section~\ref{sect-ChgVar} that our main result remains unchanged under such a transformation and we therefore choose, for the sake of simplicity, to introduce it under a natural parametrization first. 

Given a class of functions $\bsT$ from $\sW$ into $I$, we presume that there exists $\gtheta\et$ in $\bsT$ such that the conditional distribution {$Q_{i}\et(w_{i})$ is of the form $Q_{\gtheta\et(w_{i})}$ for all $i\in\{1,\ldots,n\}$ and $w_{i}\in\sW$}. We refer to $\bst\et$ as the {\em regression function}. 
Even though our estimator is based on these assumptions, {we should keep in mind that our statistical model might be misspecified: the conditional distributions $Q_{i}\et(w_{i})$ might not be exactly of the form $Q_{\gtheta\et(w_{i})}$, the set $\bsT$ might not contain $\bst\et$ or some observations might be outliers. It will follow from our risk bounds as described by Theorem~\ref{thm-1}
that such misspecifications result in an additional term in the risk corresponding to the approximation error between the truth and the model. This term is small when our model provides a good enough approximation of the truth.}

For $i\in\{1,\ldots,n\}$, let $\sQ_{\sW}$ be the set of all measurable mappings (conditional probabilities) from $(\sW,\cW)$ into $(\sT,\cT)$. We set $\sbQ_{\sW}=\sQ_{\sW}^{n}$ so that the $n$-tuple $\gQ\et=(\etc{Q\et})$ belongs to $\sbQ_{\sW}$ {as well as the $n$-tuple $\gQ_{\bst}=(Q_{\bst},\ldots,Q_{\bst})$ where $Q_{\bst}\in\sQ_{\sW}$ denotes the mapping $w\mapsto Q_{\bst(w)}$ when  $\bst$ is a measurable function from $\sW$ into $I$}. We endow the space $\sbQ_{\sW}$ with the Hellinger-type (pseudo) distance $\gh$ defined as follows. For $\gQ=(Q_{1},\ldots,Q_{n})$ and $\gQ'=(Q_{1}',\ldots,Q_{n}')$ in $\sbQ_{\sW}$, 
\begin{align}
\gh^{2}(\gQ,\gQ')&=\E\cro{\sum_{i=1}^{n}h^{2}\pa{Q_{i}(W_{i}),Q_{i}'(W_{i})}}\label{eq-loss}\\
&=\sum_{i=1}^{n}\int_{\sW}h^{2}\pa{Q_{i}(w),Q_{i}'(w)}dP_{W_{i}}(w).\nonumber
\end{align}
In particular, $\gh(\gQ,\gQ')=0$ implies that for all $i\in\{1,\ldots,n\}$, $Q_{i}=Q_{i}'$ $P_{W_{i}}$-a.s. 

On the basis of the observations $\etc{X}$, we  build an estimator $\widehat \bst$ of $\bst\et$ with values in $\bsT$ and evaluate its performance by the quantity  
\[
\gh^{2}(\gQ\et,\gQ_{\widehat{\gtheta}})=\sum_{i=1}^{n}\int_{\sW}h^{2}\pa{Q_{i}\et(w),Q_{\widehat{\gtheta}(w)}}dP_{W_{i}}(w).
\]

{When $P$ is the distribution of a random variable $(W,Y)\in\sW\times \sY$ we write it as $P=Q\cdot P_{W}$ where $P_{W}$ is the marginal distribution of $W$ and $Q$ the conditional distribution of $Y$ given $W$}. For $P=Q\cdot P_{W}$ and $P'=Q'\cdot P_{W}$ the squared Hellinger distance between $P$ and $P'$ is written as 
\[
h^{2}(P,P')=\int_{\sW}h^{2}\pa{Q(w),Q'(w)}dP_{W}(w).
\]
Setting, for $i\in\{1,\ldots,n\}$ and {$\bst$ a function from $\sW$ to $I$},  $P_{i}\et=Q_{i}\et\cdot P_{W_{i}}$ and $P_{i,\bst}=Q_{\bst}\cdot P_{W_{i}}$  we deduce that 
\[
\gh^{2}(\gQ\et,\gQ_{\bst})=\sum_{i=1}^{n}h^{2}\pa{P_{i}\et,P_{i,\bst}}
\]
{so that $\gh^{2}(\gQ\et,\gQ_{\bst})$ is equal to} $\gh^{2}(\gP\et,\gP_{\bst})=\sum_{i=1}^{n}h^{2}\pa{P_{i}\et,P_{i,\bst}}$ where $\gP\et=\otimes_{i=1}^{n}P_{i}\et$ is the true distribution of the observed data $\bsX=(X_{1},\ldots,X_{n})$ while $\gP_{\bst}=\bigotimes_{i=1}^{n}P_{i,\bst}=\bigotimes_{i=1}^{n}(Q_{\bst}\cdot P_{W_{i}})$ is the joint distribution of independent random variables {$(W'_{1},Y'_{1}),\ldots,(W'_{n},Y'_{n})$ for which the conditional distribution of $Y'_{i}$ given $W'_{i}=w_{i}$ is given by $Q_{\bst(w_{i})}\in\overline \sQ$ for all $i$. This shows that the quantity $\gh(\gQ\et,\gQ_{\bst})=\gh(\gP\et,\gP_{\bst})$ may also be interpreted as a distance between the probability distributions $\gP\et$ and $\gP_{\bst}$ and not only as a (pseudo) distance between the conditional ones $\gQ\et$ and $\gQ_{\bst}$. More generally, given two measurable functions $\bst,\bst'$ {from $\sW$ to $I$}, the quantity $\gh(\gQ_{\bst},\gQ_{\bst'})$ can also be written as $\gh(\gP_{\bst},\gP_{\bst'})$. Note that, unlike $\gQ_{\widehat \bst}$, $\gP_{\widehat \bst}$ is not an estimator (of $\gP\et$) since it depends on the marginal distributions $P_{W_{1}},\ldots,P_{W_{n}}$ which are unknown.

\subsection{Examples}
Let us present here some typical statistical {models} to which our approach applies.
\begin{exa}[{Homoscedastic} Gaussian regression with known variance]\label{exa-G}
Given $n$ independent random variables $W_{1},\ldots,W_{n}$ with values in $\sW$, let
\[
Y_{i}=\gtheta\et(W_{i})+\sigma\eps_{i}\quad \text{for all $i\in\{1,\ldots,n\}$},
\]
where the $\eps_{i}$ are i.i.d.\ standard real-valued Gaussian random variables, $\sigma$ is a known positive number and $\gtheta\et$ an unknown regression function with values in $I=\R$. In this case, $\overline \sQ$ is the set of all Gaussian distributions with variance $\sigma^{2}$ and for all $\theta\in I=\R$, $Q_{\theta}=\cN(\theta,\sigma^{2})$ has a density with respect to $\nu=\cN(0,\sigma^{2})$ on $(\sY,\cY)=(\R,\cB(\R))$ which is of the form~\eref{MEb} with $A(\theta)=\theta^{2}/(2\sigma^{2})$ and $S(y)=y/\sigma^{2}$ for all $y\in\R$.
\end{exa}
\begin{exa}[Binary regression]\label{exa-L}
The pairs of random variables $(W_{i},Y_{i})$ with $i\in\{1,\ldots,n\}$ are independent with values in $\sW\times\{0,1\}$ and
\begin{equation}\label{mod-LR}
{\P\cro{Y_{i}=y|W_{i}=w_{i}}=\frac{\exp\cro{y\bst\et(w_{i})}}{1+\exp\cro{\bst\et(w_{i})}}\quad \text{for all }y\in\{0,1\}\text{ and }w_{i}\in\sW.}
\end{equation}
This means that the conditional distribution of $Y_{i}$ given {$W_{i}=w_{i}$ is Bernoulli with mean $\pa{1+\exp\cro{-\bst\et(w_{i})}}^{-1}$} for some regression function $\bst\et$ with values in $I=\R$. This model is equivalent to the logit one presented in Example~\ref{exa-C} by changing $Y_{i}\in\{0,1\}$ into $Y_{i}'=2Y_{i}-1\in\{-1,1\}$ for all $i$.  The exponential family $\overline\sQ$ consists of the Bernoulli distribution $Q_{\theta}$ with mean $1/[1+e^{-\theta}]\in (0,1)$ and $\theta\in I=\R$. For all $\theta\in\R$, $Q_{\theta}$ admits a density with respect to the counting measure $\nu$ on $\sY=\{0,1\}$ of the form~\eref{MEb} with $A(\theta)=\log(1+e^{\theta})$ and $S(y)=y$ for all $y\in\sY$. 
\end{exa}
\begin{exa}[Poisson regression]\label{exa-P}
The exponential family $\overline \sQ$ is the set of all Poisson distributions $Q_{\theta}$ with mean $e^{\theta}$, $\theta\in I=\R$. Taking for $\nu$ the Poisson distribution with mean 1, the density of $Q_{\theta}$ with respect to $\nu$ takes the form~\eref{MEb} with $S(y)=y$ for all $y\in\N$ and $A(\theta)=e^{\theta}-1$ for all $\theta\in\R$. The conditional distribution of $Y_{i}$ given {$W_{i}=w_{i}$ is presumed to be Poisson with mean $\exp\cro{\bst\et(w_{i})}$} for some regression function $\bst\et$ with values in $I=\R$.
\end{exa}
\begin{exa}[Exponential multiplicative regression]\label{exa-E}
The random variables $W_{1},\ldots,W_{n}$ are independent and 
\begin{equation}\label{eq-expmod}
Y_{i}=\frac{Z_{i}}{\gtheta\et(W_{i})}\quad \text{for all $i\in\{1,\ldots,n\}$}
\end{equation}
where the $Z_{i}$ are i.i.d.\ with exponential distribution of parameter 1 and independent of the $W_{i}$. The conditional distribution of $Y_{i}$ given $W_{i}=w_{i}$ is then exponential with mean $1/\bst\et(w_{i})\in I=(0,+\infty)$. Exponential distributions parametrized by $\theta\in I$ admit densities with respect to the Lebesgue measure on $\R_{+}$ of the form~\eref{MEb} with $S(y)=-y$ for all $y\in\sY=\R_{+}$ and $A(\theta)=-\log \theta$.
\end{exa}
\section{The main results}\label{sect-MR}
\subsection{The estimation procedure}\label{Sect-def-est}
As mentioned in the introduction, our approach is based on $\rho$-estimation. We do not recall here the basic ideas that underline the construction of these estimators and rather refer the reader to Baraud and Birg\'e~\citeyearpar{BarBir2018}. Let $\psi$ be the function defined on $[0,+\infty]$ by
\begin{equation}\label{def-psi}
\psi(x)=\frac{x-1}{x+1}\ \quad\ \mbox{for $x\in[0,+\infty)$}\ \quad\ \mbox{and}\ \quad \psi(+\infty)=1.
\end{equation}
{Let us set, for $\bst\in\bsT$, $q_{\bst}(X_{i})=q_{\bst(W_{i})}(Y_{i})$, where $q_{\theta}$ is given by \eref{MEb} and, in order to avoid measurability issues, let us restrict ourselves to those $\bst$ belonging to a finite or countable subset $\bsTT$ of $\bsT$. We then introduce the function}
\begin{equation}\label{def-T}
\gT(\bsX,\bst,\bst')=\sum_{i=1}^{n}\psi\pa{\sqrt{\frac{q_{\bst'}(X_{i})}{q_{\bst}(X_{i})}}}\quad\mbox{for }\ \quad \bst,\bst'\in\bsTT,
\end{equation}
with the conventions $0/0=1$ and $a/0=+\infty$ for all $a>0$. We set 
\begin{equation}\label{def-gup}
\gup(\bsX,\bst)=\sup_{\bst'\in\bsTT}\gT(\bsX,\bst,\bst')\ \quad\ \mbox{for all}\ \quad\ \bst\in\bsTT
\end{equation}
{and choose $\widehat \bst=\widehat \bst(\bsX)$ as any (measurable) element of the random (and non-void) set 
\begin{equation}\label{def-sE}
\sE(\bsX)=\ac{\bst\in \bsTT\;\mbox{ such that }\; \gup(\bsX,\bst)\le \inf_{\bst'\in\bsTT}\gup(\bsX,\bst')+\frac{\kappa}{25}}
\end{equation}
with $\kappa=280\sqrt{2}+74$, {so that} $18<\kappa/25<18.8$. The random variable $\widehat \bst(\bsX)$ is our estimator of the regression function $\bst\et$ and $\gQ_{\widehat \bst}=(Q_{\widehat \bst},\ldots,Q_{\widehat \bst})$.}

{Note that the} construction of the estimator is only based on the choices of the exponential family given by~\eref{MEb} and the subset $\bsTT$ of $\bsT$. In particular, the estimator does not depend on the distributions $P_{W_{i}}$ of the $W_{i}$ which may therefore be unknown.  


The fact that we build our estimator on a finite or countable subset $\bsTT$ of $\bsT$ is not restrictive as we shall see. Besides, this assumption is consistent with the practice of calculating an estimator on a computer that can  handle a finite number of values only. 

{Let us mention that similar results could be established for the $\rho$-estimator associated to the alternative choice $\psi(x)=(x-1)/\sqrt{x^{2}+1}$. Nevertheless, the risk bounds we get for this choice of $\psi$ 
 involve numerical constants that are larger than those we establish here for $\psi(x)=(x-1)/(x+1)$. In the present paper, we therefore focus on this latter choice of $\psi$.}

\subsection{The main assumption and  the performance of $\widehat \bst$}
{Let us assume the following}: 
\begin{ass}\label{Hypo}
The class {of functions} $\bsT$ is VC-subgraph on $\sW$ with dimension not larger than $V\ge 1$.
\end{ass}
We refer the reader to {Section~2.6.2 of van der Vaart and Wellner~\citeyearpar{MR1385671} or Section~8 of Baraud et al.~\citeyearpar{MR3595933}} for the definition of VC-subgraph classes and their properties. In the present paper, we mainly use the facts that Assumption~\ref{Hypo} is satisfied when $\bsT$ is a linear space $\cV$ with finite dimension $d\ge 1$, in which case $V=d+1$ and that it is also satisfied when $\bsT$ is of the form $\{F(\bsb),\; \bsb\in\cV\}$ where $F$ is a monotone  function on the real-line. In this latter case, the VC-dimension of $\bsT$ is not larger than that of $\cV$. 
%
%
%
%
We set 
\begin{equation}\label{eq-const}
c_{1}=150,\quad c_{2}=1.1\times 10^{6},\quad c_{3}=5014 
\end{equation}
and, for $\gQ\in\sbQ_{\sW}$ and $\gA\subset \sbQ_{\sW}$, 
\[
\gh\pa{\gQ,\gA}=\inf_{\gQ'\in\gA}\gh\pa{\gQ,\gQ'}.
\]
{The  following theorem provides a probabilistic bound for the distance between our estimator $\gQ_{\widehat \bst}$ and $\gQ\et$.}
\begin{thm}\label{thm-1}
Let $\xi>0$. Under Assumption~\ref{Hypo}, whatever the conditional probabilities  $\gQ\et=(Q_{1}\et,\ldots,Q_{n}\et)$ of the $Y_{i}$ given $W_{i}$ and the distributions of the $W_{i}$, the estimator $\gQ_{\widehat \bst}$ defined in Section~\ref{Sect-def-est} satisfies, with a probability at least $1-e^{-\xi}$,
\begin{align}
\gh^{2}\pa{\gQ\et,\gQ_{\widehat \bst}}&\le c_{1}\gh^{2}(\gQ\et,\sbQ)+c_{2}V\cro{9.11+\log_{+}\pa{\frac{n}{V}}}+c_{3}\pa{1.5+\xi}\label{eq-1mod}
\end{align}
where $\sbQ=\{\gQ_{\bst}=(Q_{\bst},\ldots,Q_{\bst}),\; \bst\in \bsTT\}$ and $\log_{+}=\max(0,\log)$.
\end{thm}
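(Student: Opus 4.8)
The plan is to deduce Theorem~\ref{thm-1} from the general risk bound for $\rho$-estimators in Baraud and Birg\'e~\citeyearpar{BarBir2018} (or Baraud {\em et al.}~\citeyearpar{MR3595933}), applied to the model $\sbQ=\{\gQ_{\bst},\ \bst\in\bsTT\}$ indexed by the countable set $\bsTT$, together with the covering/complexity control that the bound requires. Recall that the abstract theory, when applied to $n$ independent but not necessarily i.i.d.\ observations and a model $\overline{\sbQ}$, asserts that with probability at least $1-e^{-\xi}$ the $\rho$-estimator $\gQ_{\widehat\bst}$ satisfies a bound of the shape
\[
\gh^{2}\pa{\gQ\et,\gQ_{\widehat\bst}}\le C_{1}\,\gh^{2}(\gQ\et,\overline{\sbQ})+C_{2}\pa{\frak{D}+\xi},
\]
where $\frak{D}$ is a ``dimension'' term measuring the massiveness of the model, quantified through the expected supremum of the centered empirical process $\bst'\mapsto\sum_{i}\big[w_i(X_i,\bst,\bst')-\E w_i(X_i,\bst,\bst')\big]$ built on the variational functions $w_i$ attached to $\psi$ and to the density ratios $\sqrt{q_{\bst'}/q_{\bst}}$, taken over Hellinger-balls in the model. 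So the first step is to quote this master theorem precisely, identify $\overline{\sbQ}$ with $\sbQ$, and reduce the proof to bounding $\frak{D}$ by $c_2 V[9.11+\log_+(n/V)]/C_2$ up to absolute constants (and checking the numerical constants $c_1,c_2,c_3,\kappa$ come out as claimed).

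Next I would carry out the complexity computation, which is where the paper's genuinely new input enters. The functions $(w,y)\mapsto S(y)\bst(w)-A(\bst(w))$ with $\bst\in\bsT$ form a VC-subgraph class: this is exactly the ``original result'' announced in the introduction, that if $S$ is arbitrary, $A$ convex on an interval, and $\bsT$ is VC-subgraph of dimension $\le V$, then this composed family is VC-subgraph with dimension controlled by $V$ (via the permanence properties of VC classes under composition with monotone functions and under sums with a fixed function — here one uses strict convexity of $A$ so that $\theta\mapsto S\theta - A(\theta)$ behaves monotonically after an appropriate splitting of the range). From this, the family of log-likelihood-ratio functions $\log(q_{\bst'}/q_{\bst})$, and hence (after the bounded, Lipschitz transformation $\psi(\sqrt{e^{\cdot}})$) the family of variational functions entering $\gT$, is also VC-subgraph with dimension $O(V)$. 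Then I would invoke the explicit-constant bound on the expectation of the supremum of an empirical process over a VC-subgraph class — Theorem~\ref{thm-VCB} of this paper — to get $\E[\sup \ldots]\le \text{const}\cdot\sqrt{V\,[\,\cdot + \log_+(n/V)]}\cdot(\text{scale})$, and a local (peeling/slicing over Hellinger shells) argument to convert this into the required dimension term $\frak{D}\lesssim V[9.11+\log_+(n/V)]$; the constant $9.11$ is precisely what drops out of the numerical constants in Theorem~\ref{thm-VCB}.

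The last step is bookkeeping: feed $\frak{D}$ into the master theorem, track all absolute constants, and verify that they can be taken as $c_1=150$, $c_2=1.1\times10^{6}$, $c_3=5014$, with the slack $\kappa/25$ in the definition \eref{def-sE} of $\sE(\bsX)$ matching the admissible tolerance in the abstract theorem; finally observe that since $\gh^{2}(\gQ\et,\gQ_{\bst})=\gh^{2}(\gP\et,\gP_{\bst})$ as noted in Section~\ref{sect-1}, the bound on the conditional pseudo-distance is literally the bound the abstract theorem provides for the product distributions, so nothing more is needed.

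I expect the main obstacle to be the VC-subgraph property of the variational class and the attendant explicit control of its dimension: getting from ``$\bsT$ is VC-subgraph'' to ``$\{S\cdot\bst - A\circ\bst\}$ is VC-subgraph with a clean dimension bound'' requires care with the permanence lemmas for VC-subgraph classes (composition with a fixed monotone map, addition of a fixed function, and the fact that strict convexity of $A$ lets one reduce to monotone pieces), and then propagating that through $\log(q_{\bst'}/q_{\bst})$ and the nonlinear but monotone bounded map $x\mapsto\psi(\sqrt{e^{x}})$ without blowing up the dimension. Everything else — plugging into the $\rho$-estimation master theorem and tracking constants through Theorem~\ref{thm-VCB} and the peeling argument — is, while lengthy, essentially routine.
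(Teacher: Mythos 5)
Your proposal follows essentially the same route as the paper: reduce to the master theorem of Baraud and Birg\'e~\citeyearpar{BarBir2018}, control the $\rho$-dimension via the VC-subgraph property of the class $\ac{(w,y)\mapsto S(y)\bst(w)-A(\bst(w)),\ \bst\in\bsT}$ together with the explicit empirical-process bound of Theorem~\ref{thm-VCB}, and then track the numerical constants. The one place your sketch is looser than the paper is precisely the key VC lemma: composition with monotone maps alone cannot work since $\theta\mapsto S(y)\theta-A(\theta)$ is concave rather than monotone, and the paper's actual mechanism is that concavity makes its superlevel sets intervals, so the subgraphs are intersections of two VC classes each of dimension at most $V$, giving dimension at most $9.41V$ by van der Vaart and Wellner~\citeyearpar{MR2797943} --- this is the precise form of the ``splitting into monotone pieces'' you allude to, and it is needed because the split point depends on $y$.
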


The constants $c_{1},c_{2}$ and $c_{3}$ are numerical constants. They are independent of the choice of the exponential family. When the model $\sbQ$ is exact, the bound we get only depends on the VC-dimension of $\bsT$.

It is clear that~\eref{eq-1mod} also holds true for $\overline \sbQ=\{\gQ_{\bst},\; \bst\in \bsT\}$ in place of $\sbQ$  when $\sbQ$ is dense in $\overline \sbQ$ with respect to the Hellinger-type distance $\gh$. This is the case when $\bsTT$ is dense in $\bsT$ for the topology of pointwise convergence. We do not comment on our result any further in this direction and rather refer to Baraud and Birg\'e~\citeyearpar{BarBir2018} Section~4.2. From now on, we assume for the sake of simplicity that $\sbQ$ is dense in $\overline \sbQ$, doing as if $\bsT=\bsTT$. In the remaining part of this section, $C$ will denote a positive numerical constant that may vary from line to line.

Let us now {rewrite}~\eref{eq-1mod} in a slightly different form. We have seen in Section~\ref{sect-1} that the quantity $\gh\pa{\gQ\et,\gQ_{\bst}}$ with $\bst\in\bsT$, which involves the conditional probabilities of $\gP\et$ and $\gP_{\bst}$ with respect to the $W_{i}$, can also be interpreted in terms of the Hellinger(-type) distance between these two product probabilities. Inequality~\eref{eq-1mod} therefore implies that
\begin{equation}
\P\left[C\gh^{2}\pa{\gP\et,\gP_{\widehat \bst}}>\gh^{2}(\gP\et,\overline\sbP)+V\cro{1+\log_{+}\pa{\frac{n}{V}}}+\xi\right]\le e^{-\xi}, \label{eq-1mod2b}
\end{equation}
where $\overline\sbP=\{\gP_{\bst},\; \bst\in\bsT\}$. Integrating this inequality with respect to $\xi>0$ leads to the following risk bound for our estimator $\widehat \bst$
\begin{equation}\label{eq-1mod3}
C\E\cro{\gh^{2}\pa{\gP\et,\gP_{\widehat \bst}}}\le \gh^{2}(\gP\et,\overline\sbP)+V\cro{1+\log_{+}\pa{\frac{n}{V}}}.
\end{equation}
%
%

In order to comment upon~\eref{eq-1mod3}, let us start with the ideal situation where $\gP\et$ belongs to $\overline \sbP$, i.e.\ $\gP\et=\gP_{\bst\et}$ {for some} $\bst\et\in \bsT$, in which case~\eref{eq-1mod3} leads to
\begin{equation}\label{eq-BRex}
C\E\cro{\gh^{2}\pa{\gP_{\bst\et},\gP_{\widehat \bst}}}\le V\cro{1+\log_{+}\pa{\frac{n}{V}}}.
\end{equation}
Up to the logarithmic factor, the right-hand side of this inequality is of the expected order of magnitude $V$ for the quantity $\gh^{2}(\gP_{\bst\et},\gP_{\widehat \bst})$: in typical situations $V$ is of the same order as the number of parameters that are used to parametrize $\bsT$.

When the true distribution $\gP\et$ {is of the form} $\gP_{\bst\et}$ but the regression function $\bst\et$ does not belong to $\bsT$, or when the conditional distributions of the $Y_{i}$ given $W_{i}$ do not belong to our exponential family, inequality~\eref{eq-1mod3} shows that, as compared to~\eref{eq-BRex}, the bound we get involves the approximation term $\gh^{2}(\gP\et,\overline\sbP)$ that accounts for the fact that our statistical model is misspecified. However, as long as this quantity remains small enough as compared to $V\cro{1+\log_{+}(n/V)}$, our risk bound will be of the same order as that given by~\eref{eq-BRex} when the model is exact. This property accounts for the stability of our estimation procedure under misspecification. In order to be more specific, let us assume that 
\begin{equation}\label{def-Pstar}
\gP\et=\bigotimes_{i=1}^{n}\left[(1-\alpha_{i})P_{i,\overline \bst}+\alpha_{i} R_{i}\right]\quad{\text{and}}\quad\sum_{i=1}^{n}\alpha_{i}\le \frac{n}{2}
\end{equation}
where {$\overline \bst\in\bsT$}, $R_{i}$ is an arbitrary distribution on $\sX$ and $\alpha_{i}$ a number in $[0,1]$ for all $i\in\{1,\ldots,n\}$. Such a distribution $\gP\et$ allows us to model different form of robustness including robustness to the presence of contaminating data as well as outliers.  
In the case of contamination, $P_{W_{i}}=P_{W}$, $\alpha_{i}=\alpha\in (0,1/2]$, $R_{i}=R\ne P_{\overline \bst}=Q_{\overline \bst}\cdot P_{W}$ for all $i\in\{1,\ldots,n\}$ and one observes an $n$-sample a portion $(1-\alpha)$ of which is drawn according to a distribution $P_{\overline \bst}$ that belongs to our model $\overline \sP=\{P_{\bst}=Q_{\bst}\cdot P_{W},\; \bst\in \bsT\}$ while the remaining part of the data is drawn according to a contaminating distribution $R$. In the second case, the data set contains the outliers $\{a_{i},\; i\in K\}$ for some subset $K\subset \{1,\ldots,n\}$ with $K\ne \varnothing$ so that $\gP\et$ is of the form~\eref{def-Pstar} with $\alpha_{i}=\1_{i\in K}$ for all $i\in\{1,\ldots,n\}$ and $R_{i}=\delta_{a_{i}}$ for all $i\in K$.  In all cases, using the classical inequality $h^{2}\le D$ where $D$ denotes the total variation distance between probabilities, we get
\begin{equation}\label{eq-biais}
\gh^{2}(\gP\et,\overline\sbP)\le \gh^{2}(\gP\et,\gP_{\overline \bst})\le \sum_{i=1}^{n}D(P_{i}\et, P_{\overline \bst})\le \sum_{i=1}^{n} \alpha_{i},
\end{equation}
which means that whenever $\sum_{i=1}^{n} \alpha_{i}$ remains small as compared to $V(1+\log_{+}(n/V))$, the performance of the estimator remains almost the same as if $\gP\et$ were equal to $\gP_{\overline \bst}$. The estimator $\widehat \bst$ therefore possesses some stability properties with respect to contamination and the presence of outliers. 

\subsection{From a natural to a general exponential family}\label{sect-ChgVar}
In Section~\ref{sect-1}, we focused on an exponential family $\overline \sQ$ parametrized by its natural parameter. However statisticians often write exponential families $\overline \sQ$ under the general form $\overline \sQ=\{R_{\gamma}=r_{\gamma}\cdot\nu,\; \gamma\in J\}$ with
%
\begin{equation}\label{MEg}
r_{\gamma}: y\mapsto e^{u(\gamma)S(y)-B(\gamma)} \quad \text{for $\gamma\in J$}.
\end{equation}
In~\eref{MEg}, $J$ denotes a (non-degenerate) interval of $\R$ and $u$ a continuous and strictly monotone function from $J$ onto $I$ so that $B=A\circ u$. In the exponential family $\overline \sQ=\{R_{\gamma},\; \gamma\in J\}=\{Q_{\theta},\; \theta\in I\}$, the probabilities $R_{\gamma}$ are associated to the probabilities $Q_{\theta}$ by the formula $R_{\gamma}=Q_{u(\gamma)}$.

With this new parametrization, we could alternatively write our statistical model $\overline \sbQ$ as 
\begin{equation}\label{eq-modelG}
\overline \sbQ=\ac{\gR_{\bsg}=(R_{\bsg},\ldots,R_{\bsg}),\; \bsg\in \bsG}
\end{equation}
where $\bsG$ is a class of  functions $\bsg$ from $\sW$ into $J$. Starting from such a statistical model and presuming that $\gQ\et=\gR_{\bsg\et}$ for some function $\bsg\et\in\bsG$, we can build an estimator $\widehat \bsg$ of $\bsg\et$ as follows: given a finite or countable subset $\gGamma$ of $\bsG$ we set $\widehat \bsg=u^{-1}(\widehat \bst)$ where $\widehat \bst$ is any estimator obtained by applying the procedure described in Section~\ref{Sect-def-est} under the natural parametrization of the exponential family $\overline \sQ$ and using the finite or countable model $\bsTT=\{\bst=u\circ \bsg,\; \bsg\in\gGamma\}$.  

Since our model $\overline \sbQ$ for the conditional probabilities $\gQ\et$ is unchanged (only its parametrization changes), it would be interesting to establish a result on the performance of the estimator $\gR_{\widehat \bsg}=\gQ_{\widehat \bst}$ which is independent of the parametrization. A nice feature of the VC-subgraph property lies in the fact that it is preserved by composition with a monotone function: since $u$ is monotone, if $\bsG$ is VC-subgraph with dimension not larger than $V$, so is $\bsT$ and our Theorem~\ref{thm-1} applies. 
%
The following corollary is therefore straightforward. 
\begin{cor}\label{cor-general}
Let $\xi>0$. If the statistical model $\overline \sbQ$ is under the general form~\eref{eq-modelG} and $\bsG$  is VC-subgraph with dimension not larger than $V\ge 1$, whatever the conditional probabilities  $\gQ\et=(Q_{1}\et,\ldots,Q_{n}\et)$ of the $Y_{i}$ given $W_{i}$ and the distributions of the $W_{i}$, the estimator $\gR_{\widehat \bsg}$ satisfies with a probability at least $1-e^{-\xi}$,
\begin{align}
\gh^{2}\pa{\gQ\et,\gR_{\widehat \bsg}}&\le c_{1}\gh^{2}(\gQ\et,\sbQ)+c_{2}V\cro{9.11+\log_{+}\pa{\frac{n}{V}}}+c_{3}\pa{1.5+\xi}\label{eq-1modG}
\end{align}
where $\sbQ=\{\gR_{\bsg},\; \bsg\in \gGamma\}$. In particular, 
\begin{equation}\label{eq-risk}
\E\cro{\gh^{2}\pa{\gQ\et,\gR_{\widehat \bsg}}}\le C'\cro{\gh^{2}(\gQ\et,\sbQ)+V\cro{1+\log_{+}\pa{\frac{n}{V}}}},
\end{equation}
for some numerical constant $C'>0$.
\end{cor}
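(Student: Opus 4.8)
The plan is to derive Corollary~\ref{cor-general} directly from Theorem~\ref{thm-1} by exploiting the fact that the whole estimation procedure is parametrization-invariant at the level of the induced conditional distributions. First I would observe that, by definition, $\widehat \bsg=u^{-1}(\widehat \bst)$ where $\widehat \bst$ is the $\rho$-estimator of Section~\ref{Sect-def-est} built from the natural parametrization using the countable model $\bsTT=\{\bst=u\circ\bsg,\; \bsg\in\gGamma\}$. Since $R_{\bsg(w)}=Q_{u(\bsg(w))}=Q_{\bst(w)}$ for every $w\in\sW$, we have the pointwise identity $\gR_{\widehat \bsg}=\gQ_{\widehat \bst}$ as $n$-tuples of conditional probabilities, and likewise $\{\gR_{\bsg},\; \bsg\in\gGamma\}=\{\gQ_{\bst},\; \bst\in\bsTT\}=\sbQ$ in the notation of Theorem~\ref{thm-1}. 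Hence the two sides of~\eref{eq-1modG} literally coincide with the two sides of~\eref{eq-1mod}, and it remains only to check that Assumption~\ref{Hypo} holds for $\bsTT$, equivalently for $\bsT$.

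The key step is the preservation of the VC-subgraph property under composition with the monotone function $u$. Since $u\colon J\to I$ is continuous and strictly monotone, the class $\bsT=\{u\circ\bsg,\; \bsg\in\bsG\}$ is VC-subgraph on $\sW$ with dimension not larger than that of $\bsG$, i.e.\ not larger than $V$; this is the standard stability result for VC-subgraph classes under a fixed monotone transformation of the values (see Section~2.6.2 of van der Vaart and Wellner~\citeyearpar{MR1385671} or Section~8 of Baraud et al.~\citeyearpar{MR3595933}, as already invoked after Assumption~\ref{Hypo}). Consequently $\bsTT\subset\bsT$ is VC-subgraph with dimension at most $V$ as well, so Theorem~\ref{thm-1} applies with this value of $V$ and yields~\eref{eq-1modG} with probability at least $1-e^{-\xi}$.

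To obtain the risk bound~\eref{eq-risk}, I would integrate the deviation inequality~\eref{eq-1modG} over $\xi>0$, exactly as was done to pass from~\eref{eq-1mod2b} to~\eref{eq-1mod3} in the main text: writing $U=c_{1}\gh^{2}(\gQ\et,\sbQ)+c_{2}V[9.11+\log_{+}(n/V)]+c_{3}(1.5+\xi)$, the bound $\P[\gh^{2}(\gQ\et,\gR_{\widehat\bsg})>U]\le e^{-\xi}$ for all $\xi>0$ gives $\E[\gh^{2}(\gQ\et,\gR_{\widehat\bsg})]\le c_{1}\gh^{2}(\gQ\et,\sbQ)+c_{2}V[9.11+\log_{+}(n/V)]+c_{3}(1.5+1)$, and absorbing the absolute constants $c_{1},c_{2},c_{3}$ and the numerical factors $9.11$ and $2.5$ into a single numerical constant $C'>0$ yields~\eref{eq-risk}.

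There is essentially no obstacle here: the statement is a formal consequence of Theorem~\ref{thm-1}, and the only substantive ingredient is the VC-subgraph stability under monotone reparametrization, which is a known fact recalled earlier in the paper. The one point that deserves a line of care is making explicit that $\gR_{\widehat\bsg}$ and $\gQ_{\widehat\bst}$ are the same object as conditional-probability $n$-tuples, so that the loss $\gh^{2}(\gQ\et,\cdot)$ takes the same value on both and the model $\sbQ$ in Theorem~\ref{thm-1} coincides with $\{\gR_{\bsg},\;\bsg\in\gGamma\}$; once this identification is stated, the corollary is immediate.
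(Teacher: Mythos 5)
Your proposal is correct and follows essentially the same route as the paper: the paper also deduces the corollary from Theorem~\ref{thm-1} via the preservation of the VC-subgraph property under composition with the monotone map $u$, the identification $\gR_{\widehat \bsg}=\gQ_{\widehat \bst}$, and integration of the deviation bound over $\xi$ to get the risk bound. Nothing is missing.
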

A nice feature of our approach lies in the fact that~\eref{eq-1modG} holds for all exponential families simultaneously and all ways of parametrizing them. In particular, the VC-dimension associated to the model $\overline \sbQ$ is intrinsic since it is independent of the way it is parametrized.

\section{Uniform risk bounds}\label{sect-URB}
Throughout this section, we assume that the $W_{i}$ are i.i.d.\ with common distribution $P_{W}$ and that {$\gQ\et=\gR_{\bsg\et}=(R_{\bsg\et},\ldots,R_{\bsg\et})$} belongs to a statistical model of the (general) form given by~\eref{eq-modelG} where $\bsG$ is a class of smooth functions. More precisely, we assume that for some $\alpha\in (0,1]$ and $M>0$, $\bsG=\cH_{\alpha}(M)$ is  the set of functions $\bsg$ on {$\sW=[0,1]$} with values in $J$ that satisfy the H\"older condition
\begin{equation}\label{def-Hold}
\ab{\bsg(x)-\bsg(y)}\le M|x-y|^{\alpha}\quad\text{for all $x,y\in [0,1]$}
\end{equation}
{and denote by $\overline \sQ[\cH_{\alpha}(M)]$ the corresponding family of conditional distributions $Q\et=R_{\bsg\et}$. Because of our equidistribution assumption and the  form of our loss function, the loss $\gh^{2}\pa{\gQ\et,\gR_{\widetilde \bsg}}$ takes the form $nh^{2}\pa{Q\et,R_{\widetilde \bsg}}$ whatever the estimator $\widetilde \bsg$.} 

Our aim is both to estimate {$\gR_{\bsg\et}$, or equivalently $R_{\bsg\et}$, under the assumption that $\bsg\et\in\cH_{\alpha}(M)$ and to evaluate the minimax risk over $\overline \sQ[\cH_{\alpha}(M)]$}, i.e.\  the quantity 
\begin{equation}
\cR_{n}(\cH_{\alpha}(M))=\adjustlimits\inf_{\widetilde \bsg}\sup_{\bsg\et\in\cH_{\alpha}(M)}\E\cro{h^{2}\pa{R_{\bsg\et},R_{\widetilde \bsg}}}
\label{def-Minimax}
\end{equation}
with
\[
h^{2}\pa{R_{\bsg},R_{\bsg'}}\eqd\int_{\sW}h^{2}\pa{R_{\bsg(w)},R_{\bsg'(w)}}dP_{W}(w).
\]
where the infimum runs among all estimators $\widetilde \bsg$ of $\bsg\et$ based on the $n$-sample $\etc{X}$. 
%
%

\subsection{Uniform risk bounds over H\"older classes}
It is common to parametrize the exponential family $\overline \sQ=\{R_{\gamma},\; \gamma\in J\}$ by the means of the distributions, i.e.\ with $\gamma=\int_{\sY}ydR_{\gamma}(y)$. This is typically the case for the Bernoulli, Gaussian and Poisson families for example. In such a case, one can write the model in a heteroscedastic regression form:
\begin{equation}
Y_{i}=\bsg\et(W_{i})+\sigma\left(\st\bsg\et(W_{i})\right)\eps_{i}\quad \text{for all $i\in\{1,\ldots,n\}$},
\label{eq-heteroreg}
\end{equation}
where $\sigma^{2}\left(\st\bsg\et(W_{i})\right)$ is the conditional variance of $Y_{i}$ and the $\varepsilon_{i}$ errors which, conditionally to the $W_{i}$ are centered and with variance 1. As mentioned in the introduction, many authors have used this form and its similarity to the classical Gaussian regression framework given in Example~\ref{exa-G} to derive estimators with performances that mimic those of the least squares estimators in the Gaussian case. In particular, when $\bsg\et\in\cH_{\alpha}(M)$, the minimax rate for Gaussian regression is $n^{-2\alpha/(2\alpha+1)}$ and, for the more general situation described in (\ref{eq-heteroreg}) various authors established similar rates for their estimators by using the $\L_{2}$-loss (under somewhat restrictive assumptions  as mentioned in the introduction). 

With our Hellinger-type loss, we also show in this section that $n^{-2\alpha/(2\alpha+1)}$ is the minimax rate  for estimating $R_{\ggamma\et}$ with $\ggamma\et\in \cH_{\alpha}(M)$. However, this result holds when the parametrization of the exponential family satisfies some suitable conditions. When these conditions are not met, the minimax rate may be different as we shall see, even when the exponential family is parametrized by the mean as it is commonly done in the literature. In any case,  our estimator is proven to achieve the minimax rate up to a logarithmic factor.  
%
%

Let us first introduce the following assumptions on the parametrization.
\begin{ass}\label{H-param}
There exists a constant $\kappa>0$ such that 
\begin{equation}\label{eq-bsph}
h(R_{\gamma},R_{\gamma'})\le \kappa\ab{\gamma-\gamma'}\quad \text{for all $\gamma,\gamma'\in J$}
\end{equation}
and for a (non-degenerate) compact interval $K\subset J$, there exists a constant $c_{K}>0$ such that 
\begin{equation}\label{eq-bifh}
h(R_{\gamma},R_{\gamma'})\ge c_{K}\ab{\gamma-\gamma'}\quad \text{for all $\gamma,\gamma'\in K$.}
\end{equation}
\end{ass}
This assumption is in particular satisfied in the following situation.
\begin{prop}\label{prpL2-h}
Let $\overline \sQ=\{Q_{\theta},\; \theta\in I\}$ be a natural exponential family defined by~\eref{MEb} where $I$ is an open interval. If the function $v$  satisfies
\begin{equation}\label{eq-condv}
v'(\theta)=\sqrt{\frac{A''(\theta)}{8}}>0\quad \text{for all $\theta\in I$},
\end{equation}
when parametrized by  $\gamma=v(\theta)$, the exponential family $\overline \sQ=\{R_{\gamma}=Q_{v^{-1}(\gamma)},\; \gamma\in J\}$ satisfies Assumption~\ref{H-param} with $\kappa=1$ for all choices of a (non-trivial) compact subset $K$ of $J$.
\end{prop}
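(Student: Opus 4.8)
The plan is to exploit the explicit form of the Hellinger affinity in a natural exponential family. For $\theta_{1},\theta_{2}\in I$, a direct computation from~\eref{MEb} gives
$\int_{\sY}\sqrt{q_{\theta_{1}}q_{\theta_{2}}}\,d\nu=\exp\cro{A\pa{\tfrac{\theta_{1}+\theta_{2}}{2}}-\tfrac12\pa{A(\theta_{1})+A(\theta_{2})}}=e^{-\Delta(\theta_{1},\theta_{2})}$, where $\Delta(\theta_{1},\theta_{2})\eqd\tfrac12\pa{A(\theta_{1})+A(\theta_{2})}-A\pa{\tfrac{\theta_{1}+\theta_{2}}{2}}\ge 0$ by convexity of $A$, hence $h^{2}(Q_{\theta_{1}},Q_{\theta_{2}})=1-e^{-\Delta(\theta_{1},\theta_{2})}$. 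Expanding $A$ to second order about the midpoint $\tfrac{\theta_{1}+\theta_{2}}{2}$, the first-order terms cancel and one obtains $\Delta(\theta_{1},\theta_{2})=\tfrac1{16}(\theta_{1}-\theta_{2})^{2}\pa{A''(\xi_{1})+A''(\xi_{2})}$ for some $\xi_{1},\xi_{2}$ lying between $\theta_{1}$ and $\theta_{2}$; since $A''>0$ on $I$ this yields in particular the local bound $h^{2}(Q_{\theta_{1}},Q_{\theta_{2}})\le\Delta(\theta_{1},\theta_{2})\le\tfrac18(\theta_{1}-\theta_{2})^{2}\sup_{[\theta_{1}\wedge\theta_{2},\,\theta_{1}\vee\theta_{2}]}A''$.

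To get~\eref{eq-bsph} I would upgrade this local estimate to a global one via subadditivity of $h$ along a refining partition. Fix $\theta'<\theta$ in $I$ and a subdivision $\theta'=s_{0}<\dots<s_{N}=\theta$; by the triangle inequality for the Hellinger distance together with the local bound above,
\[
h(Q_{\theta},Q_{\theta'})\le\sum_{k=1}^{N}h(Q_{s_{k-1}},Q_{s_{k}})\le\frac{1}{2\sqrt2}\sum_{k=1}^{N}(s_{k}-s_{k-1})\sqrt{\sup\nolimits_{[s_{k-1},s_{k}]}A''}.
\]
Since $\sqrt{A''}$ is continuous on the compact interval $[\theta',\theta]\subset I$, letting the mesh go to $0$ the right-hand side converges to $\tfrac1{2\sqrt2}\int_{\theta'}^{\theta}\sqrt{A''(s)}\,ds$, and because $v'=\sqrt{A''/8}=\tfrac1{2\sqrt2}\sqrt{A''}$ this limit equals $\ab{v(\theta)-v(\theta')}=\ab{\gamma-\gamma'}$ with $\gamma=v(\theta)$, $\gamma'=v(\theta')$. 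Hence $h(R_{\gamma},R_{\gamma'})\le\ab{\gamma-\gamma'}$ for all $\gamma,\gamma'\in J$, i.e.~\eref{eq-bsph} with $\kappa=1$. (Equivalently, one may note that $s\mapsto\sqrt{q_{s}}$ is a $C^{1}$ curve in $L^{2}(\nu)$ with $\norm{\tfrac{d}{ds}\sqrt{q_{s}}}_{L^{2}(\nu)}^{2}=\tfrac14\Var_{Q_{s}}(S)=\tfrac14A''(s)$, and that $\sqrt2\,h(Q_{\theta},Q_{\theta'})=\norm{\sqrt{q_{\theta}}-\sqrt{q_{\theta'}}}_{L^{2}(\nu)}$ is at most the length of that curve; this uses only the usual differentiation-under-the-integral regularity of exponential families on $\mathring I$.)

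For the lower bound~\eref{eq-bifh} I would use the explicit formula again. Set $\widetilde K=v^{-1}(K)$, a compact subinterval of $I$ since $v$ is an increasing homeomorphism of $I$ onto $J$, and put $\underline a=\min_{\widetilde K}A''>0$, $\overline a=\max_{\widetilde K}A''<\infty$. For $\gamma,\gamma'\in K$ write $\theta=v^{-1}(\gamma)$, $\theta'=v^{-1}(\gamma')\in\widetilde K$. The Taylor identity gives $\Delta(\theta,\theta')\ge\tfrac{\underline a}{8}(\theta-\theta')^{2}$, while $\ab{\gamma-\gamma'}=\ab{\int_{\theta'}^{\theta}\sqrt{A''/8}}\le\sqrt{\overline a/8}\,\ab{\theta-\theta'}$, so that $\Delta(\theta,\theta')\ge\tfrac{\underline a}{\overline a}(\gamma-\gamma')^{2}$; also $\Delta(\theta,\theta')\le\Delta_{0}\eqd\tfrac{\overline a}{8}\pa{\mathrm{diam}\,\widetilde K}^{2}$. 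Since $x\mapsto1-e^{-x}$ is concave and vanishes at $0$, $1-e^{-x}\ge\tfrac{1-e^{-\Delta_{0}}}{\Delta_{0}}\,x$ on $[0,\Delta_{0}]$, whence $h^{2}(R_{\gamma},R_{\gamma'})=1-e^{-\Delta(\theta,\theta')}\ge\tfrac{1-e^{-\Delta_{0}}}{\Delta_{0}}\cdot\tfrac{\underline a}{\overline a}(\gamma-\gamma')^{2}$, which is~\eref{eq-bifh} with $c_{K}=\big(\tfrac{(1-e^{-\Delta_{0}})\underline a}{\Delta_{0}\overline a}\big)^{1/2}>0$.

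The delicate point is~\eref{eq-bsph}. The naive route — bound $h^{2}\le\Delta$ and then try to prove $\Delta\le(v(\theta)-v(\theta'))^{2}$ by elementary calculus — does not work, because this last inequality is genuinely false for an arbitrary nonnegative weight playing the role of $A''$; only the weaker statement $1-e^{-\Delta}\le(v(\theta)-v(\theta'))^{2}$ is true, and its validity rests on the structural constraints satisfied by true variance functions of exponential families. One must therefore bring in either the Hilbert-space geometry of the embedding $\theta\mapsto\sqrt{q_{\theta}}$ or the subadditivity of $h$ along a partition, as above; everything else is routine, the only technical care being the standard regularity of $\theta\mapsto Q_{\theta}$ needed either to differentiate under the integral sign or to pass to the Riemann-sum limit.
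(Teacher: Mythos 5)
Your proof is correct, but it takes a genuinely different route from the paper's. The paper's argument is a two-line appeal to the general theory of regular statistical experiments: the Fisher information of the natural family at $\theta$ is $A''(\theta)$, so after the change of variables $\gamma=v(\theta)$ with $v'=\sqrt{A''/8}$ the family $\{R_{\gamma},\ \gamma\in J\}$ is regular with \emph{constant} Fisher information equal to $8$, and Theorem~7.6, p.~81 of Ibragimov and Has'minskii then delivers both $h^{2}(R_{\gamma},R_{\gamma'})\le(\gamma-\gamma')^{2}$ on $J$ and the reverse inequality on compacts. You instead prove everything from scratch out of the closed-form affinity $h^{2}(Q_{\theta},Q_{\theta'})=1-e^{-\Delta(\theta,\theta')}$ (formula~\eref{eq-CalculHellinger} of the paper): the midpoint Taylor expansion gives the correct local behaviour $\Delta\approx\tfrac18A''\,(\theta-\theta')^{2}$; the chaining/triangle-inequality argument (equivalently, the arc-length bound for the curve $s\mapsto\sqrt{q_{s}}$ in $\L_{2}(\nu)$, whose speed is $\tfrac12\sqrt{A''(s)}$) upgrades the local upper bound to the global one $h\le\tfrac{1}{2\sqrt2}\int\sqrt{A''}=\ab{\gamma-\gamma'}$; and concavity of $x\mapsto1-e^{-x}$ on the bounded range of $\Delta$ over $v^{-1}(K)$ yields~\eref{eq-bifh} with an explicit $c_{K}$. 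The two arguments rest on the same geometric fact --- Hellinger distance is dominated by Fisher-information arc length --- but yours is self-contained (no need to verify the regularity hypotheses of the cited theorem for $\{R_{\gamma}\}$, which for exponential families are automatic but must still be checked) and it produces an explicit constant $c_{K}$, at the cost of length. Your closing warning is well taken and is the one real pitfall here: the pointwise comparison $\Delta(\theta,\theta')\le(v(\theta)-v(\theta'))^{2}$ is false for a general positive weight in place of $A''$, so the subadditivity of $h$ along a partition (or the Hilbert-space embedding) is genuinely needed; that is exactly the content that the paper's citation encapsulates.
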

It is well-known that the functions $v_{j}(\theta)$, $j\in\{1,2,3,4\}$ given by
\[
v_{1}(\theta)=\frac{\theta}{\sigma\sqrt{8}},\;\; v_{2}(\theta)=\frac{1}{\sqrt{2}}\arcsin\left(\frac{1}{\sqrt{1+e^{-\theta}}}\right),\;\; v_{3}(\theta)=\frac{1}{\sqrt{2}}e^{\theta/2}\quad\text{on }\R
\]
and $v_{4}(\theta)=8^{-1/2}\log \theta$ on $(0,+\infty)$  satisfy \eref{eq-condv} in the cases of  Examples~\ref{exa-G}, ~\ref{exa-L}, ~\ref{exa-P} and~\ref{exa-E} respectively. 

As a consequence of Assumption~\ref{H-param} we derive by integration with respect to $P_{W}$ that, for all functions $\bsg,\bsg'$ on $\sW$ with values in~$J$, 
%
\begin{equation}\label{eq-BHEL}
h^{2}\pa{R_{\bsg},R_{\bsg'}}\le \kappa^{2}\norm{\bsg-\bsg'}_{2}^{2}=\kappa^{2}\int_{\sW}\pa{\bsg-\bsg'}^{2}dP_{W},
\end{equation}
which leads to the following uniform risk bound for the performance of the $\rho$-estimator $R_{\widehat \bsg}$ when $\bsg\et\in\cH_{\alpha}(M)$. Note that this upper bound holds without any assumption on $P_{W}$.

\begin{prop}\label{prop-bs}
Assume that~\eref{eq-bsph} is satisfied. Let $\alpha\in (0,1]$, $M>0$ and $\overline \bsS$ be the set of functions with values in the interval $J$ which are piecewise constant on each element of a partition $\{I_{j}, j\in\{1,\ldots,D\}\}$ of $[0,1]$ into $D\ge 1$ intervals of lengths $1/D$. For  
\[
D=D(\alpha,M,n)=\min\ac{k\in\N,\; \pa{\frac{\kappa^{2} M^{2}n}{1+\log n}}^{\frac{1}{1+2\alpha}}\le k},
\]
 the $\rho$-estimator $\widehat \bsg$ based on (any) countable and dense subset $\bsS$ of $\overline \bsS$ (with respect to the supremum norm) satisfies 
\begin{align}
\sup_{\bsg\et\in \cH_{\alpha}(M)}\!\!\E\cro{h^{2}\pa{R_{\bsg\et},R_{\widehat \bsg}}}\le 2C'\cro{\pa{\frac{(\kappa M)^{1/\alpha}\log(en)}{n}}^{\frac{2\alpha}{1+2\alpha}}+\frac{3\log(en)}{2n}}\nonumber
\end{align}
where $C'$ is the numerical constant appearing in~\eref{eq-risk}.
\end{prop}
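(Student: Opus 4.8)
The plan is to derive the bound from the general risk inequality \eref{eq-risk} of Corollary~\ref{cor-general}, applied to the model $\overline\sbQ=\{\gR_{\bsg},\ \bsg\in\overline\bsS\}$ with the $\rho$-estimator $\widehat\bsg$ built from the countable dense subset $\bsS\subset\overline\bsS$. Since the $W_{i}$ are i.i.d.\ with law $P_{W}$ and the loss is additive, $\gh^{2}(\gQ\et,\gR_{\widetilde\bsg})=n\,h^{2}(R_{\bsg\et},R_{\widetilde\bsg})$ for any $\widetilde\bsg$, whence $\gh^{2}(\gQ\et,\sbQ)=n\inf_{\bsg\in\bsS}h^{2}(R_{\bsg\et},R_{\bsg})$ with $\sbQ=\{\gR_{\bsg},\ \bsg\in\bsS\}$. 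First I would check that $\overline\bsS$ is contained in the $D$-dimensional linear space spanned by $\1_{I_{1}},\ldots,\1_{I_{D}}$; using the linear-space fact recalled after Assumption~\ref{Hypo} (a $d$-dimensional linear space is VC-subgraph of dimension $d+1$) together with the standard fact that the VC-subgraph dimension does not increase under passing to a subclass, the class $\overline\bsS$ is VC-subgraph with dimension $V\le D+1$. Dividing \eref{eq-risk} by $n$ then gives
\[
\E\cro{h^{2}\pa{R_{\bsg\et},R_{\widehat\bsg}}}\le C'\cro{\,\inf_{\bsg\in\bsS}h^{2}\pa{R_{\bsg\et},R_{\bsg}}+\frac{D+1}{n}\pa{1+\log_{+}\pa{n/(D+1)}}}.
\]

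Next I would bound the approximation term using a piecewise-constant interpolant. Given $\bsg\et\in\cH_{\alpha}(M)$, choose any point $x_{j}\in I_{j}$ for each $j\in\{1,\ldots,D\}$ and set $\bsg_{0}=\sum_{j=1}^{D}\bsg\et(x_{j})\1_{I_{j}}$; since $\bsg\et$ takes values in $J$ one has $\bsg_{0}\in\overline\bsS$, and \eref{def-Hold} together with $|I_{j}|=1/D$ gives $\norm{\bsg\et-\bsg_{0}}_{\infty}\le M D^{-\alpha}$. Because $\bsS$ is dense in $\overline\bsS$ for the supremum norm, $\inf_{\bsg\in\bsS}\norm{\bsg\et-\bsg}_{\infty}\le\norm{\bsg\et-\bsg_{0}}_{\infty}\le M D^{-\alpha}$. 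Combining this with \eref{eq-BHEL} (which only requires \eref{eq-bsph}) and $\norm{\cdot}_{2}\le\norm{\cdot}_{\infty}$ relative to the probability $P_{W}$, I get $\inf_{\bsg\in\bsS}h^{2}(R_{\bsg\et},R_{\bsg})\le\kappa^{2}M^{2}D^{-2\alpha}$, and I note that no assumption on $P_{W}$ has entered.

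It then remains to substitute $D=D(\alpha,M,n)=\lceil D_{0}\rceil$ with $D_{0}=(\kappa^{2}M^{2}n/(1+\log n))^{1/(1+2\alpha)}$ and verify the arithmetic. Writing $L=1+\log n=\log(en)$ and using $\kappa^{2}M^{2}=(\kappa M)^{2}$ together with the elementary identity $2\alpha(1-\tfrac{2\alpha}{1+2\alpha})=\tfrac{2\alpha}{1+2\alpha}$, a direct computation yields the two exact equalities
\[
\kappa^{2}M^{2}D_{0}^{-2\alpha}=\pa{\frac{(\kappa M)^{1/\alpha}L}{n}}^{\frac{2\alpha}{1+2\alpha}}=\frac{D_{0}L}{n},
\]
which is precisely the bias/variance balance encoded in the definition of $D$. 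Since $D_{0}\le D\le D_{0}+1$, the approximation term is at most $\kappa^{2}M^{2}D_{0}^{-2\alpha}$, while, using $1+\log_{+}(n/(D+1))\le 1+\log n=L$, the estimation term is at most $\tfrac{(D+1)L}{n}\le\tfrac{(D_{0}+2)L}{n}=\tfrac{D_{0}L}{n}+\tfrac{2L}{n}$. Adding the two contributions, inserting them into the displayed inequality of the first paragraph, and using $\tfrac{2L}{n}\le\tfrac{3L}{n}$, I recover exactly the bound asserted in the proposition; taking the supremum over $\bsg\et\in\cH_{\alpha}(M)$ finishes the proof. There is no serious obstacle here: the only points requiring a little care are the VC-subgraph bound $V\le D+1$ for the piecewise-constant model and the bookkeeping that keeps the leftover $O(\log(en)/n)$ terms below $\tfrac{3}{2}\log(en)/n$; the remainder is the standard balancing of the two rates already built into the choice of $D$.
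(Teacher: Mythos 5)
Your proof is correct and follows essentially the same route as the paper's: Corollary~\ref{cor-general} with $V=D+1$ for the piecewise-constant class, a sup-norm approximation bound $MD^{-\alpha}$ transferred to the Hellinger loss via \eref{eq-bsph}/\eref{eq-BHEL}, and the same balancing arithmetic from the definition of $D$ (the paper uses the local average $\gamma_{j}=D\int_{I_{j}}\bsg\et$ rather than a point value $\bsg\et(x_{j})$ as the approximant, an immaterial difference). The bookkeeping matches the stated constants.
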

To show that this rate is optimal under Assumption~\ref{H-param} and the $\rho$-estimator minimax (up to a logarithmic factor) when the distribution of the $W_{i}$ can be arbitrary, let us assume that $P_{W}$ is the uniform distribution on $\sW=[0,1]$. It then follows from (\ref{eq-bifh}) that there exists a constant $c_{K}>0$ such that for all functions $\bsg,\bsg'$ on $\sW$ with values in $K$, 
\[
h^{2}\pa{R_{\bsg},R_{\bsg'}}\ge c_{K}^{2}\norm{\bsg-\bsg'}_{2}^{2}.
\]
Assumption~\ref{H-param} makes the Hellinger-type distance $h\pa{R_{\bsg},R_{\bsg'}}$  and the $\L_{2}(P_{W})$-one between $\bsg$ and $\bsg'$ comparable, at least when $\bsg$ and $\bsg'$ take their values in $K$. 
\begin{prop}\label{prop-bi}
Let $\alpha\in (0,1]$ and $M>0$. If $P_{W}$ is the uniform distribution on $[0,1]$ and Assumption~\ref{H-param} is satisfied for a compact interval $K$ of length $2\overline L>0$, then
\[
\cR_{n}(\cH_{\alpha}(M))\ge  \frac{c_{K}^{2}}{48}\cro{\pa{\frac{3M^{1/\alpha}}{2^{2\alpha+4+1/\alpha}\kappa^{2}n}}^{\frac{2\alpha}{1+2\alpha}}\bigwedge \pa{\frac{M^{2}}{4}}\bigwedge \overline L^{2}}.
\]
\end{prop}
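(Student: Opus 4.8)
The plan is to establish the minimax lower bound by a standard reduction to a multiple hypothesis testing problem via Assouad's lemma, exploiting the lower bound \eref{eq-bifh} in Assumption~\ref{H-param} to transfer a lower bound on the $\L_2(P_W)$-distance between competing regression functions into a lower bound on the Hellinger-type loss. First I would fix the compact interval $K=[\gamma_0-\overline L,\gamma_0+\overline L]$ and construct a finite family of perturbations of the constant function $\bsg\equiv\gamma_0$ of the usual bump form: partition $[0,1]$ into $D$ intervals of length $1/D$, pick a smooth bump $\phi$ supported on $[0,1]$ with $\phi(1/2)>0$ and H\"older seminorm at most $1$, and for $\omega=(\omega_1,\dots,\omega_D)\in\{0,1\}^D$ set $\bsg_\omega(x)=\gamma_0+\delta\sum_{j=1}^D \omega_j\,\phi\big(D(x-x_j)\big)$ where $x_j$ is the left endpoint of the $j$-th interval and $\delta=hD^{-\alpha}$ for a suitable constant $h$. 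Choosing $h\le M$ (to respect the H\"older radius: the seminorm of each rescaled bump is $\delta D^{\alpha}$, so we need $\delta D^{\alpha}\|\phi\|_{\cH_\alpha(1)}\le M$, hence $h$ proportional to $M$) and $\delta\,\|\phi\|_\infty\le \overline L$ (so that each $\bsg_\omega$ takes values in $K$, hence in $J$) guarantees that all $\bsg_\omega\in\cH_\alpha(M)$ and that $R_{\bsg_\omega}$ stays in the regime where \eref{eq-bifh} applies.

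Next, for any two parameters $\omega,\omega'$ differing in a single coordinate $j$, I would compute that $\|\bsg_\omega-\bsg_{\omega'}\|_2^2=\delta^2\int_0^1\phi(D(x-x_j))^2\,dx=\delta^2 D^{-1}\|\phi\|_2^2$, and therefore by \eref{eq-bifh} integrated against the uniform $P_W$, $h^2(R_{\bsg_\omega},R_{\bsg_{\omega'}})\ge c_K^2\delta^2 D^{-1}\|\phi\|_2^2$. The separation between any two parameters is thus at least $c_K^2\delta^2 D^{-1}\|\phi\|_2^2$ times the Hamming distance. For the information side, the Kullback--Leibler divergence (or the squared Hellinger distance, which suffices for Assouad with the bound via Le Cam or Birg\'e) between the product measures $\gP_{\bsg_\omega}$ and $\gP_{\bsg_{\omega'}}$ for neighbouring $\omega,\omega'$ is controlled using \eref{eq-bsph}: $h^2(\gP_{\bsg_\omega},\gP_{\bsg_{\omega'}})=n\,h^2(R_{\bsg_\omega},R_{\bsg_{\omega'}})\le n\kappa^2\|\bsg_\omega-\bsg_{\omega'}\|_2^2=n\kappa^2\delta^2 D^{-1}\|\phi\|_2^2$, which I would keep bounded by a small absolute constant (say $1/2$) by the choice of $D$ — this forces the trade-off $\delta^2\asymp D/(n\kappa^2)$, and combined with $\delta=hD^{-\alpha}$ gives the optimal $D\asymp (n\kappa^2 h^2)^{1/(2\alpha+1)}$ and $\delta^2 D^{-1}\asymp (n\kappa^2)^{-1}(n\kappa^2 h^2)^{1/(2\alpha+1)}\cdot(h^2)^{-1}\cdot\ldots$, yielding the announced rate $(M^{1/\alpha}/(\kappa^2 n))^{2\alpha/(1+2\alpha)}$ after substituting $h\asymp M$ and tracking the explicit numerical constants ($2^{2\alpha+4+1/\alpha}$ etc.).

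Then I would invoke Assouad's lemma in the form: for any estimator $\widetilde\bsg$,
\[
\sup_{\omega}\E_{\bsg_\omega}\cro{\|\widetilde\bsg-\bsg_\omega\|_2^2}\ge \frac{D\cdot(\text{per-coordinate separation})}{8}\cdot\min_{\omega\sim\omega'}\pa{1-\|\gP_{\bsg_\omega}-\gP_{\bsg_{\omega'}}\|_{TV}},
\]
and since $\|\gP_{\bsg_\omega}-\gP_{\bsg_{\omega'}}\|_{TV}\le h(\gP_{\bsg_\omega},\gP_{\bsg_{\omega'}})\sqrt{2-h^2(\cdot)}\le \sqrt{2}\,h(\gP_{\bsg_\omega},\gP_{\bsg_{\omega'}})\le 1$ by the above, the affinity factor is bounded below by a positive absolute constant. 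Converting from $\L_2$-risk to Hellinger-type risk via \eref{eq-bifh} once more (valid because every $\bsg_\omega$ takes values in $K$, and one can restrict $\widetilde\bsg$ to $K$ by projection without increasing the Hellinger loss) produces the lower bound $\cR_n(\cH_\alpha(M))\ge c_K^2\cdot(\text{rate})$, and the three-way minimum $\wedge\, M^2/4\,\wedge\,\overline L^2$ appears precisely because the construction is only admissible when $h\asymp M$ does not exceed what the H\"older radius allows and when $\delta\|\phi\|_\infty$ does not exceed $\overline L$ — in the regimes where one of these binding constraints is active, the bump height saturates and the rate is replaced by the corresponding constant.

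I expect the main obstacle to be bookkeeping the explicit numerical constants so that they match the stated $\frac{c_K^2}{48}$ and $2^{2\alpha+4+1/\alpha}$: this requires pinning down a concrete choice of bump $\phi$ (e.g. a triangular or quadratic bump for which $\|\phi\|_\infty$, $\|\phi\|_2^2$, and $\|\phi\|_{\cH_\alpha(1)}$ are explicitly computable, noting that for $\alpha<1$ a triangular bump has infinite H\"older-$1$ seminorm so one must either use $\phi(x)=\big(\tfrac14-(x-\tfrac12)^2\big)_+$-type profiles or accept the $\alpha$-dependence of the constant through $\|\phi\|_{\cH_\alpha(1)}$), carefully propagating the constraint $\delta^{\alpha}D^{\alpha}\cdot$seminorm$\le M$ and the TV-affinity bound, and then taking the worst case over the finitely many binding constraints. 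The probabilistic content (Assouad) and the reduction via Assumption~\ref{H-param} are routine; the delicate part is purely the optimization of constants, which I would handle by first proving the bound with unspecified absolute constants and then, in a final paragraph, choosing $\phi$ and $D$ to recover the sharp form.
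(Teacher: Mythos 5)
Your plan is correct in outline and follows essentially the same route as the paper: Assouad's lemma applied to a hypercube of bump perturbations of a constant function centred in $K$, with \eref{eq-bifh} supplying the per-coordinate separation, \eref{eq-bsph} controlling the information between neighbouring hypotheses, and the three-way minimum arising exactly as you describe from the binding constraints (H\"older radius versus range in $K$). Three execution points are worth flagging. First, the paper's Lemma~\ref{assouad} is stated and proved directly for the Hellinger-type loss (for an arbitrary $P$ it picks $\overline\eps$ minimizing $\eps\mapsto h^{2}(P,P_{\eps})$ and uses $h^{2}(P_{\eps},P_{\overline\eps})\le 4h^{2}(P,P_{\eps})$), so it never needs your step ``project $\widetilde\bsg$ onto $K$ without increasing the Hellinger loss''. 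That projection claim is in fact true here, because in a one-parameter exponential family $\gamma\mapsto h^{2}(R_{\gamma_{0}},R_{\gamma})$ is nondecreasing in $\ab{\gamma-\gamma_{0}}$ by convexity of $A$ (and this survives monotone reparametrization), but it does \emph{not} follow from Assumption~\ref{H-param} alone and would require a short separate argument; working with the Hellinger loss throughout, as the paper does, is cleaner. Second, your parenthetical worry about the triangular bump is unfounded: a $1$-Lipschitz bump supported on $[0,1]$ has finite H\"older-$\alpha$ seminorm for every $\alpha\in(0,1]$, and Proposition~\ref{propert} is precisely the lemma that makes the sum of rescaled triangular bumps $g(x)=x\1_{[0,1/2]}+(1-x)\1_{[1/2,1]}$ admissible in $\cH_{\alpha}(M)$; this choice, with $\norm{g}_{2}^{2}=1/12$, is what produces the constant $c_{K}^{2}/48=c_{K}^{2}/(4\times 12)$. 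Third, bounding the neighbouring squared Hellinger information by $1/2$ makes your affinity bound $1-\sqrt{2}\,h\ge 0$ vacuous; you need a strictly smaller constant (the paper takes $a\le 1/8$, giving $1-\sqrt{2a}\ge 1/2$). All three are repairable without changing your strategy.
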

This result says that in all exponential families for which Assumption~\ref{H-param} is satisfied, the order of magnitude of the minimax rate over $\cH_{\alpha}(M)$ cannot be smaller than  $n^{-2\alpha/(2\alpha+1)}$, at least when $P_{W}$ is the uniform distribution on $[0,1]$. 

\subsection{A counterexample}
Without a suitable parametrization of the exponential family $\overline \sQ=\{Q_{\theta},\; \theta\in I\}$ like that provided by Proposition~\ref{prpL2-h}, the minimax rate of convergence of $\cR_{n}(\cH_{\alpha}(M))$ may be different from  $n^{-2\alpha/(2\alpha+1)}$ as shown by the following simple example of Poisson distributions parametrized by their means.
%
\begin{prop}\label{prop-BI0}
Let $\alpha\in (0,1]$, $M>0$, $P_{W}$ be the uniform distribution on $[0,1]$ and $\overline \sQ$ the set of Poisson distributions $R_{\gamma}$ with means $\gamma\in J=(0,+\infty)$. For all $n\ge 1$,  
\[
\cR_{n}(\cH_{\alpha}(M))\ge\frac{(1-e^{-1})}{144}\cro{\pa{\frac{3M^{1/\alpha}}{2^{4+\alpha+3/\alpha}n}}^{\frac{\alpha}{1+\alpha}}\bigwedge \frac{M}{8}\bigwedge \pa{1+\frac{\sqrt{3}}{2}}}.
\]
\end{prop}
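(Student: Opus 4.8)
The plan is to establish the lower bound on $\cR_n(\cH_\alpha(M))$ for the Poisson family parametrized by the mean via a standard two-point (or Assouad-type) reduction, exploiting the fact that, near $0$, the Hellinger distance between two Poisson laws $R_\gamma$ and $R_{\gamma'}$ behaves like $|\sqrt\gamma-\sqrt{\gamma'}|$ rather than $|\gamma-\gamma'|$. Concretely, I would work with small target functions $\bsg$ taking values in a shrinking interval $(0,\delta_n]$ so that the \emph{square-root} of the mean, not the mean itself, is the natural coordinate; this is precisely what breaks the usual rate. First I would recall/derive the elementary identity $h^2(R_\gamma,R_{\gamma'}) = 1 - \exp[-\tfrac12(\sqrt\gamma-\sqrt{\gamma'})^2]$ for Poisson laws, and the two-sided bound $c(\sqrt\gamma-\sqrt{\gamma'})^2 \le h^2(R_\gamma,R_{\gamma'}) \le \tfrac12(\sqrt\gamma-\sqrt{\gamma'})^2$ valid on any bounded range; the constant $(1-e^{-1})$ in the statement strongly suggests the lower bound is used in the crude form $h^2 \ge (1-e^{-1})\min(1,\tfrac12(\sqrt\gamma-\sqrt{\gamma'})^2)$ or similar, after restricting the squared argument to $[0,1]$.

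Next I would set up the perturbation family. Fix a bump $\phi:[0,1]\to[0,1]$ that is $\alpha$-H\"olderian with constant $1$ and supported in a small interval, partition $[0,1]$ into $D$ intervals of length $1/D$, and consider functions $\bsg_{\bs\epsilon}$ of the form $\bsg_{\bs\epsilon}(x) = \big(\eta + \tfrac{r}{2}\sum_{j} \epsilon_j \phi(D(x-x_j))\big)^2$ where $\bs\epsilon\in\{-1,+1\}^D$, i.e.\ perturbing the \emph{square root} of the mean additively by $\pm r/2$ on each block. I would check the two required facts: (i) each $\bsg_{\bs\epsilon}\in\cH_\alpha(M)$, which forces $r \lesssim M D^{-\alpha}$ (choosing $\eta$ of the same order as $r$ so the square stays H\"olderian with the right constant — this is where the $M^{1/\alpha}$ and the various powers of $2$ in the bound come from); and (ii) the per-block squared Hellinger separation is of order $r^2$ up to a bounded cap, so that by Assouad's lemma the minimax risk is at least a constant times $D r^2 \wedge (\text{caps})$, provided the per-block Kullback or Hellinger affinity between neighbours is bounded away from $0$ after $n$ observations — which requires $n \cdot (\text{per-block } h^2) \lesssim 1$, i.e.\ $n r^2/D \lesssim 1$. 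Combining (i) $r \asymp M D^{-\alpha}$ with (ii) $D \asymp n r^2 \asymp n M^2 D^{-2\alpha}$ gives $D \asymp (nM^2)^{1/(1+2\alpha)}$ only in the \emph{regular} regime; here, because the separation is measured in the square-root coordinate while the bound is $Dr^2$, re-expressing in terms of $M$ and $n$ yields $D^{-2\alpha}$-type risk $\asymp D r^2 \asymp (M^{1/\alpha}/n)^{\alpha/(1+\alpha)}$ — note the exponent $\alpha/(1+\alpha)$ rather than $2\alpha/(1+2\alpha)$, matching the statement. I would carry this optimization out carefully to land on the explicit constant $3M^{1/\alpha}/2^{4+\alpha+3/\alpha}$ inside the bracket, and the $M/8$ and $1+\sqrt3/2$ terms arise as the caps coming respectively from the H\"olderness constraint being binding even at $D=1$ and from the trivial bound when $r$ cannot be taken small.

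For the testing step proper, I would invoke Assouad's lemma in the form: $\cR_n(\cH_\alpha(M)) \ge \tfrac{D}{2}\,\big(\min_j \text{per-block separation}\big)\,\min_{d_H(\bs\epsilon,\bs\epsilon')=1}\big(1 - \|\gP_{\bsg_{\bs\epsilon}}^{\otimes n}\wedge \gP_{\bsg_{\bs\epsilon'}}^{\otimes n}\|\big)$, or the cleaner Hellinger version bounding the affinity between the two $n$-fold products by $(1 - h^2(\text{single block, single obs}))^{n}$ since the blocks are disjoint and the $W_i$ uniform — each block carries on average $n/D$ observations, so the relevant quantity is $(1 - ch^2_{\text{block}})^{n/D} \ge$ const once $n h^2_{\text{block}}/D \le$ const, exactly the budget constraint above. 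This reduces everything to the one-observation, one-block Hellinger computation, which is the Poisson identity recalled at the start, evaluated at means $\eta^2$ versus $(\eta+r)^2$, whose square-root-coordinate gap is exactly $r$, giving per-observation $h^2_{\text{block}} \le \tfrac12 r^2 \cdot (1/D)$ after integrating the bump against the uniform $P_W$.

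The main obstacle I anticipate is bookkeeping the constants so they come out exactly as stated — in particular ensuring $\bsg_{\bs\epsilon} = (\text{square root})^2 \in \cH_\alpha(M)$ with the \emph{stated} numerical slack (the square of an $\alpha$-H\"older function is $\alpha$-H\"older only on a bounded range, and the H\"older constant picks up a factor proportional to the sup-norm, which is why $\eta$ and $r$ must both be controlled and why the $2^{3/\alpha}$-type factors appear), and simultaneously keeping the per-block Hellinger separation bounded below by a clean multiple of $r^2$ after the $\min(1,\cdot)$ truncation that produces the $(1-e^{-1})$ prefactor. A secondary but routine difficulty is verifying the affinity lower bound $1 - h^2(\gP^{\otimes n},\gP'^{\otimes n}) \ge 1 - (1 - h^2_{\text{single}})^{n} $ stays $\ge$ a universal constant (e.g.\ $\ge 1/2$ or the $1/48$, $1/144$ appearing after combining with the factor $D/2$ and Assouad's $1/2$'s) under the chosen budget — this is where the innocuous-looking numerical denominator $144$ is assembled.
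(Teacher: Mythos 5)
Your plan is correct and follows essentially the same route as the paper: the Poisson identity $h^{2}(R_{\gamma},R_{\gamma'})=1-e^{-(\sqrt{\gamma}-\sqrt{\gamma'})^{2}/2}$ with the $(1-e^{-1})$ truncation, an Assouad family of sign-perturbations on $D$ (in the paper, $2^{N}$) disjoint blocks with means of vanishing order, and the balance between the H\"older constraint and the budget $n\,h^{2}_{\mathrm{block}}\lesssim 1$, which produces the nonstandard exponent $\alpha/(1+\alpha)$ (the paper perturbs the mean additively around a small baseline $L$ while you perturb its square root around $\eta\asymp r$; the two constructions coincide under $L\asymp r^{2}$). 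One bookkeeping slip: the H\"older constraint on $\bsg_{\bs{\eps}}=(\eta+\cdots)^{2}$ forces $r^{2}D^{\alpha}\lesssim M$, not $r\lesssim MD^{-\alpha}$ as written (which, combined with your budget $D\asymp nr^{2}$, would reproduce the regular rate $n^{-2\alpha/(1+2\alpha)}$); your own later remark that the H\"older constant of the square picks up a factor of the sup-norm already contains the correction, and with it the optimization lands on risk $\asymp r^{2}\asymp(M^{1/\alpha}/n)^{\alpha/(1+\alpha)}$ as claimed.
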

In the Poisson case with this parametrization, the rate for $\cR_{n}(\cH_{\alpha}(M))$  is therefore at least of order $n^{-\alpha/(1+\alpha)}$, hence much slower than the one we would get if the family would be properly parametrized as indicated in the previous section, namely $n^{-2\alpha/(2\alpha+1)}$. We conclude that, depending on the exponential family, the parametrization by the mean may lead to different minimax rates. Nevertheless, as shown in the following proposition, the $\rho$-estimator still achieves the optimal rate (up to a logarithmic factor) in this case.
\begin{prop}\label{prop-rhoEstPoisson}
Let $\alpha\in (0,1]$, $M>0$ and $\overline\bsS$ be the set of functions with values in $J=(0,+\infty)$ which are piecewise constant on each element of a partition $\{I_{j}, j\in\{1,\ldots,D\}\}$ of $[0,1]$ into $D\ge 1$ intervals of lengths $1/D$. For  
\[
D=D(\alpha,M,n)=\min\ac{k\in\N,\; \left(\frac{Mn}{2\log(en)}\right)^{\frac{1}{1+\alpha}}\le k},
\]
the $\rho$-estimator $\widehat\bsg$ based on (any) countable and dense subset $\bsS$ of $\overline \bsS$ (with respect to supremum norm) satisfies 
\begin{align}
\sup_{\bsg\et\in \cH_{\alpha}(M)}\!\!\E\cro{h^{2}\pa{R_{\bsg\et},R_{\widehat \bsg}}}\le 2C'\cro{\pa{\frac{(M/2)^{1/\alpha}\log(en)}{n}}^{\frac{\alpha}{1+\alpha}}+\frac{3\log(en)}{2n}}\nonumber
\end{align}
where $C'$ is the numerical constant appearing in~\eref{eq-risk}.
\end{prop}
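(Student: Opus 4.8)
The plan is to mimic the proof of Proposition~\ref{prop-bs}, the only structural change being that the inequality \eref{eq-BHEL} between the Hellinger-type distance and the $\L_{2}(P_{W})$ one — which rested on Assumption~\ref{H-param} and is unavailable here — must be replaced by a weaker, Poisson-specific one. I would first establish that, for Poisson distributions, the squared Hellinger distance is controlled by the \emph{first} power of the difference of the means: $h^{2}(R_{\gamma},R_{\gamma'})\le \tfrac12\ab{\gamma-\gamma'}$ for all $\gamma,\gamma'\in J$. Indeed, the affinity between $R_{\gamma}$ and $R_{\gamma'}$ equals $\sum_{k\ge0}e^{-(\gamma+\gamma')/2}(\sqrt{\gamma\gamma'})^{k}/k!=e^{-\frac12(\sqrt{\gamma}-\sqrt{\gamma'})^{2}}$, so that
\[
h^{2}(R_{\gamma},R_{\gamma'})=1-e^{-\frac12(\sqrt{\gamma}-\sqrt{\gamma'})^{2}}\le \tfrac12(\sqrt{\gamma}-\sqrt{\gamma'})^{2}=\tfrac12\pa{\gamma+\gamma'-2\sqrt{\gamma\gamma'}}\le \tfrac12\ab{\gamma-\gamma'}.
\]
Integrating against $P_{W}$ gives $h^{2}(R_{\bsg},R_{\bsg'})\le \tfrac12\norm{\bsg-\bsg'}_{\infty}$ for all $J$-valued functions $\bsg,\bsg'$ on $[0,1]$; this is the exact analogue of \eref{eq-BHEL} with $\norm{\cdot}_{2}^{2}$ replaced by $\norm{\cdot}_{\infty}$, and it needs no assumption on $P_{W}$.

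With this substitute in place I would run the usual approximation/estimation trade-off. Given $\bsg\et\in\cH_{\alpha}(M)$, take $\overline\bsg\in\overline\bsS$ equal on each interval $I_{j}$ to the value of $\bsg\et$ at the left endpoint of $I_{j}$; it is $J$-valued because $\bsg\et$ is, and $\norm{\bsg\et-\overline\bsg}_{\infty}\le MD^{-\alpha}$ by the H\"older condition \eref{def-Hold}, since two points of the same $I_{j}$ are at distance at most $1/D$. Using the density of $\bsS$ in $\overline\bsS$ for the supremum norm (together with the triangle inequality for $h$ and the continuity just proved) and the fact that the $W_{i}$ are i.i.d., one obtains
\[
\gh^{2}(\gQ\et,\sbQ)=n\inf_{\bsg\in\bsS}h^{2}(R_{\bsg\et},R_{\bsg})\le n\,h^{2}(R_{\bsg\et},R_{\overline\bsg})\le \frac{nM}{2}D^{-\alpha}.
\]
Moreover $\overline\bsS$, hence $\bsS$, is contained in the $D$-dimensional linear space of functions that are constant on each $I_{j}$, so by the remark following Assumption~\ref{Hypo} and the monotonicity of the VC-subgraph dimension under inclusion it is VC-subgraph of dimension $V\le D+1$. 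Plugging these two facts into the risk bound \eref{eq-risk} of Corollary~\ref{cor-general}, using $\gh^{2}(\gQ\et,\gR_{\widehat\bsg})=nh^{2}(R_{\bsg\et},R_{\widehat\bsg})$ and $1+\log_{+}(n/V)\le \log(en)$, yields
\[
\E\cro{h^{2}(R_{\bsg\et},R_{\widehat\bsg})}\le C'\cro{\frac{M}{2}D^{-\alpha}+\frac{(D+1)\log(en)}{n}}.
\]

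It then remains to substitute the prescribed value $D=\lceil D_{0}\rceil$ with $D_{0}=\bigl(Mn/(2\log(en))\bigr)^{1/(1+\alpha)}$ — precisely the value balancing $\tfrac{M}{2}D^{-\alpha}$ against $\tfrac{D\log(en)}{n}$ — and to tidy up constants. When $D_{0}\ge 1$ one has $D\le 2D_{0}$, both terms are of order $\bigl((M/2)^{1/\alpha}\log(en)/n\bigr)^{\alpha/(1+\alpha)}$, and the rounding together with the ``$+1$'' in $V\le D+1$ leaves a residual $\tfrac{3\log(en)}{2n}$; the degenerate case $D_{0}<1$, i.e.\ $M<2\log(en)/n$, forces $D=1$ and is disposed of directly since the right-hand side is then itself $O(\log(en)/n)$. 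Collecting everything gives the stated bound with the constant $2C'$. I do not foresee a real obstacle: the single new ingredient is the linear-in-$\ab{\gamma-\gamma'}$ Hellinger control for the Poisson family, and it is precisely this linear — rather than quadratic — dependence that replaces the exponent $2\alpha/(1+2\alpha)$ of Proposition~\ref{prop-bs} by the slower $\alpha/(1+\alpha)$ here; everything else is routine constant-chasing identical in spirit to the proof of Proposition~\ref{prop-bs}.
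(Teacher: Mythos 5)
Your proposal is correct and follows essentially the same route as the paper: the key Poisson-specific bound $h^{2}(R_{\gamma},R_{\gamma'})\le \tfrac12(\sqrt{\gamma}-\sqrt{\gamma'})^{2}\le\tfrac12\ab{\gamma-\gamma'}$ is exactly what the paper uses (it applies $1-e^{-x}\le x$ and $\ab{\sqrt{a}-\sqrt{b}}\le\sqrt{\ab{a-b}}$ in the other order, via the sup norm of $\sqrt{\bsg}-\sqrt{\overline\bsg}$), and the approximation by piecewise constants, the application of Corollary~\ref{cor-general} with $V=D+1$, and the balancing of $\tfrac{M}{2}D^{-\alpha}$ against $D\log(en)/n$ are identical. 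The only cosmetic difference is taking $\overline\bsg$ to be the left-endpoint value rather than the average of $\bsg\et$ over each $I_{j}$; both give the same $MD^{-\alpha}$ sup-norm approximation error.
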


\section{Calculation of $\rho$-estimators and simulation study}\label{sect-ST} 
In this section, we study the performance of the $\rho$-estimator $\widehat \bst$ of the regression function $\bst\et$ in the cases of Examples~\ref{exa-L}, \ref{exa-P}, \ref{exa-E} which correspond respectively to the logit regression, Poisson and exponential distributions parametrized by their natural parameters.


\subsubsection*{The models}
The function space $\bsT$ consists of functions $\bst$ on $\sW=\R^{5}$ with values in $I$ and for $w=(w_{1},\ldots,w_{5})\in\sW$ the value $\bst(w)$ has the following form:\\
--- In the Bernoulli model, $I=\R$ and 
\begin{equation}\label{eq-logit}
\bst(w)=\eta_{0}+\sum_{j=1}^{5}\eta_{j}w_{j}\quad \text{with $\eta=(\eta_{0},\ldots,\eta_{5})\in\R^{6}$}.
\end{equation}
--- In the Poisson model, $I=\R$ and
\begin{equation}\label{eq-poisson}
\bst(w)=\log\log\cro{1+\exp\pa{\eta_{0}+\sum_{j=1}^{5}\eta_{j}w_{j}}}\quad \text{with $\eta=(\eta_{0},\ldots,\eta_{5})\in\R^{6}$}.
\end{equation}
--- In the exponential model, $I=(0,+\infty)$ and
\begin{equation}\label{eq-exp}
\bst(w)=\log\cro{1+\exp\pa{\eta_{0}+\sum_{j=1}^{5}\eta_{j}w_{j}}}\quad \text{with $\eta=(\eta_{0},\ldots,\eta_{5})\in\R^{6}$}.
\end{equation}
For all these cases, the set $\bsT$ is VC-subgraph with dimension not larger than 7. For the calculation of the estimator on a computer, we do as if $\bsT$ {were} countable and consequently take $\bsTT=\bsT$.

\subsubsection*{The competitors}
We compare the performance of $\widehat \bst$ to that of the MLE and, in cases of Examples~\ref{exa-P} and~\ref{exa-E}, to a median-based estimator $\widehat \bst_{0}$. The estimator $\widehat \bst_{0}$ is defined as any minimizer over $\bsT$ of the criterion 
\begin{equation}\label{eq-median}
\bst\mapsto \sum_{i=1}^{n}\ab{Y_{i}-m(\bst(W_{i}))}
\end{equation}
where $m(\theta)$ is the median (or an approximation of it) of the distribution $Q_{\theta}$ for $\theta\in I$. We take 
$m(\theta)=e^{\theta}+1/3-0.02e^{-\theta}$ for the Poisson distribution with parameter $e^{\theta}$ and $m(\theta)=(\log 2)/\theta$ for the exponential one with parameter $\theta$.

In the examples we have chosen, the log-likelihood function is concave with respect to the parameter $\eta\in\R^{6}$ and the MLE is calculated using the stats4 R-package. The criterion~\eref{eq-median}
 is not convex with respect to the parameter $\eta$ and the median-based estimator is calculated using the cmaes R-package based on the CMA (Covariance Matrix Adaptation) method which turns out to be more stable than the gradient descent method. For more details about the CMA method, we refer the reader to \cite{Hansen:aa}.

%
%

\subsection{Calculation of the $\rho$-estimator}
As mentioned in Section~\ref{sect-MR}, we call $\rho$-estimator $\widehat \bst=\widehat \bst(\bsX)$ any element of the random set
\[
\sE(\bsX)=\ac{\bst\in \bsTT\;\mbox{ such that }\; \gup(\bsX,\bst)\le \inf_{\bst'\in\bsTT}\gup(\bsX,\bst')+{\frac{\kappa}{25}}},
\]
where 
\[
\gup(\bsX,\bst)=\sup_{\bst'\in\bsTT}\gT(\bsX,\bst,\bst')=\sup_{\bst'\in\bsTT}\sum_{i=1}^{n}\psi\pa{\sqrt{\frac{q_{\bst'}(X_{i})}{q_{\bst}(X_{i})}}}\quad\mbox{for all }\bst\in\bsTT.
\]

{To calculate the $\rho$-estimator $\widehat \gtheta$ we use the iterative Algorithm~1 described below. We stop it either when the condition $\gup(\bsX,\widehat \bst)\le 1$ is met or otherwise after $L=100$ iterations. Since 
\[
\gup(\bsX,\bst)=\sup_{\bst'\in \bsTT}\gT(\bsX,\bst,\bst')\ge \gT(\bsX,\bst,\bst)=0, 
\]
the quantity $ \inf_{\bst\in\bsTT}\gup(\bsX,\bst)$ is nonnegative and when $\gup(\bsX,\widehat \bst)\le 1$, 
\[
\gup(\bsX,\widehat \bst)\le  \inf_{\bst\in\bsTT}\gup(\bsX,\bst)+1\le \inf_{\bst\in\bsTT}\gup(\bsX,\bst)+\frac{\kappa}{25},
\]
which shows that $\widehat \bst$ is a $\rho$-estimator. The constant 1 has nothing magical, we just believe that the closer $\gup(\bsX,\widehat \bst)$ to $\inf_{\bst\in\bsTT}\gup(\bsX,\bst)$ the better $\widehat \bst$ performs. The condition $\gup(\bsX,\widehat \bst)\le 1$ can therefore be seen as an early stopping time that guarantees that the estimator $\widehat \bst$ almost minimizes $\bst\mapsto \gup(\bsX,\bst)$ over $\bsTT$. In all our simulations, {(including the cases when we stop after 100 iterations)}, the resulting estimators $\widehat \bst$ satisfy
\[
\gup(\bsX,\widehat \bst)\le \frac{\kappa}{25}\quad \text{hence}\quad \gup(\bsX,\widehat \bst)\le \inf_{\bst\in\bsTT}\gup(\bsX,\bst)+\frac{\kappa}{25}
\]
and are therefore $\rho$-estimators.
}

{The algorithm is based on the following heuristic that we describe in the situation where the data are  i.i.d.\ with distribution $P\et=P_{\bst\et}$ for the sake of simplicity. It is proven  in~\cite{BY-TEST}[Proposition~14], that $\gT(\bsX,\bst_{0},\bst_{1})$ is a good test statistic for testing $\cH_{0}:$ ``$P_{\bst_{0}}$ is closer (in Hellinger distance) to $P\et$ than $P_{\bst_{1}}$'' against $\cH_{1}:$ ``$P_{\bst_{1}}$ is closer to $P\et$ than $P_{\bst_{0}}$''. 
More precisely, with a probability close to 1, $\gT(\bsX,\bst_{0},\bst_{1})>0$ when $h^{2}(P_{\bst_{1}},P\et)\ll h^{2}(P_{\bst_{0}},P\et)$ and $1/n\ll h^{2}(P_{\bst_{0}},P\et)$ while the test statistic $\gT(\bsX,\bst_{0},\bst_{1})<0$ when $h^{2}(P_{\bst_{0}},P\et)\ll h^{2}(P_{\bst_{1}},P\et)$ and $1/n\ll h^{2}(P_{\bst_{1}},P\et)$.
 Note that if $h^{2}(P_{\bst},P\et)\approx 1/n$ for both $\bst=\bst_{0}$ and $\bst=\bst_{1}$, the two distributions $P_{\bst_{0}}$ and $P_{\bst_{1}}$ are both close to $P\et$ and choosing between $\bst_{0}$ and $\bst_{1}$ is unimportant. Because of these properties, if we start from an initial point $\bst_{0}$ that is not too far from $\bst\et$ and if $\bst_{1}$ is such that $\gT(\bsX,\bst_{0},\bst_{1})>0$, it is likely that one of the two following situations occur: 
\\
--- the quantity $h^{2}(P_{\bst_{1}},P\et)$ is smaller or at least of comparable order as $h^{2}(P_{\bst_{0}},P\et)$;\\
--- $h^{2}(P_{\bst_{1}},P\et)$ is of order  $1/n$.\\
In any case,  either $\bst_{1}$ improves on $\bst_{0}$ or, at least, performs similarly. We can then repeat the test starting now from $\bst_{1}$ and looking from some $\bst_{2}$ such that $\gT(\bsX,\bst_{1},\bst_{2})>0$ and so on. }

Since $\rho$-estimators are not unique, there is no reason for the algorithm to converge to a point and we are not expecting the algorithm to do so. Since the algorithm is based on the test statistic $\gT(\bsX,\bst,\bst')$, that provides a robust test between the probabilities $\gP_{\bst}$ and $\gP_{\bst'}$, as explained above, we expect the algorithm to get closer to the truth as we iterate it. As we shall see, only few iterations are in general necessary to meet the condition $\gup(\bsX,\widehat \bst)\le 1$ and when it is not the case, the estimator obtained after $L=100$ iterations provides a suitable estimation of the parameter. To find a maximizer of the mapping $\bst\mapsto \gT(\bsX,\widehat \bst,\bst)$ at each iteration, we use the cmaes R-package.

\begin{algorithm}[h]\label{algo1}
\renewcommand{\algorithmicrequire}{\textbf{Input:}}
\renewcommand{\algorithmicensure}{\textbf{Output:}}
\caption{Searching for the $\rho$-estimator}
\begin{algorithmic}[1]
\REQUIRE ~~\\
$\bsX=(X_{1},\cdots, X_{n})$: the data\\
$\bst_{0}$: the starting point\\
\ENSURE $\widehat \bst$
\STATE Initialize $l=0$, $\widehat\bst=\bst_{0}$;
\WHILE{$\gup(\bsX,\widehat\bst)>1$ and $l\leq L$}
\STATE $l\leftarrow l+1$
\STATE $\bst_{1}=\argmax\limits_{\bst\in\bsTT}\gT(\bsX,\widehat\bst,\bst)$
\STATE $\widehat\bst\leftarrow\bst_{1}$
\ENDWHILE
\STATE Return $\widehat\bst$.
\end{algorithmic}
\end{algorithm}

To intialize the process we choose the value of $\bst_{0}$ as follows. In the case of Bernoulli regression, we take for $\bst_{0}$ the function on $\R^{d}$ that  minimizes on $\bsT$ the penalized criterion (that can be found in the e1071 R-package) 
\[
\bst\mapsto 10\sum_{i=1}^{n}\pa{1-(2Y_{i}-1)\bst(W_{i})}_{+}+\frac{1}{2}\sum_{i=1}^{d}\ab{\bst(e_{i})-\bst(0)}^{2},
\]
where $e_{1},\ldots,e_{d}$ denotes the canonical basis of $\R^{d}$ (with $d=6$). The e1071 R-package is used for the purpose of classifying the $Y_{i}$ from the $W_{i}$. For the other exponential families we choose for $\bst_{0}$ the median-based estimator $\widehat \bst_{0}$. 


\subsection{Comparisons of the estimators when the model is exact}\label{sect-comp1}
Throughout this section, we assume that the data $X_{1},\ldots,X_{n}$ are i.i.d.\ with distribution $P_{\bst\et}=Q_{\bst\et}\cdot P_{W}$, $\bst\et\in \bsT$, and we estimate the risk 
\[
R_{n}(\widetilde \bst)=\E\cro{h^{2}\pa{P_{\bst\et},P_{\widetilde \bst}}}=\E\cro{\int_{\sW}h^{2}\pa{Q_{\bst\et(w)},Q_{\widetilde \bst(w)}}dP_{W}(w)}
\]
of an estimator $\widetilde \bst(\bsX)$ by the Monte Carlo method on the basis of 500 replications. For this simulation study $n=500$. We recall that, for a natural exponential family, 
\begin{equation}\label{eq-CalculHellinger}
h^{2}(Q_{\theta},Q_{\theta'})=1-\exp\cro{A\pa{\frac{\theta+\theta'}{2}}-\frac{A(\theta)+A(\theta')}{2}}
\end{equation}
where $A$ is given in (\ref{MEb}).


\paragraph{\bf Bernoulli model.}
We consider the function $\bst\et=\bst$ given by~\eref{eq-logit} with $\eta=(1,\ldots,1)\in\R^{6}$.  The distribution $P_{W}$ is $(P_{W}^{(1)}+P_{W}^{(2)}+P_{W}^{(3)})/3$ where 
$P_{W}^{(1)},P_{W}^{(2)}$ and $P_{W}^{(3)}$ are respectively the uniform distributions on the cubes 
\[
[-a,a]^{5},\quad  \cro{b-0.25,b+0.25}^{5}\quad \text{and}\quad \cro{-b-0.25,-b+0.25}^{5}
\]
with $a=0.25$ and $b=2$. 

\paragraph{\bf Poisson model}
In this case $\bst\et=\bst$ given by~\eref{eq-poisson} with $\eta=(0.7,3,4,10,2,5)$. The distribution $P_{W}$ is $P_{W,1}^{\otimes 2}\otimes P_{W,2}\otimes P_{W,3}^{\otimes 2}$ where $P_{W,1},P_{W,2}$ and $P_{W,3}$ are the uniform distributions on $[0.2,0.25]$, $\cro{0.2,0.3}$ and $\cro{0.1,0.2}$ respectively.

\paragraph{\bf Exponential model}
We set $\bst\et=\bst$ given by~\eref{eq-exp} with $\eta=(0.07,3,4,6,2,1)$. The distribution $P_{W}$ is $P_{W,1}^{\otimes 3}\otimes P_{W,2}^{\otimes 2}$ where $P_{W,1}$ and $P_{W,2}$ are the uniform distributions on $\cro{0,0.01}
$ and $\cro{0,0.1}$ respectively. 

In order to compare the performance of the $\rho$-estimator to the two other competitors we proceed as follows: we estimate the risk $R_{n}(\widehat\bst)$ of $\widehat \bst$ by Monte Carlo as explained before. We then use this quantity as a benchmark and given another estimator $\widetilde \bst$ we compute the quantity
%
\begin{equation}\label{eq-excess}
\cE(\widetilde \bst)=\frac{R_{n}(\widetilde\bst)-R_{n}(\widehat\bst)}{R_{n}(\widehat\bst)}\quad \text{so that}\quad R_{n}(\widetilde\bst)=\pa{1+\cE(\widetilde \bst)}R_{n}(\widehat\bst).
\end{equation}
Note that large positive values of $\cE(\widetilde \bst)$ indicate a significant superiority of our estimator as compared to $\widetilde \bst$ and negative values inferiority.
The respective values of $R_{n}(\widehat\bst)$ and $\cE(\widetilde \bst)$ are displayed in Table~\ref{T-L1b}. The computation time of each estimator is displayed in Table~\ref{CT-WS}.
\begin{table}[h]
\centering
\caption{Values of $R_{n}(\widehat\bst)$ and $\cE(\widetilde \bst)$ when the model is well-specified}\label{T-L1b}
\begin{tabular}{cccc}
\toprule
& $R_{n}(\widehat\bst)$& $\cE(\text{MLE})$ & $\cE(\widehat \bst_{0})$ \\
\midrule
Logit  & 0.0015& <+0.1\% & $-$ \\
\midrule
Poisson  & 0.0015&<+0.1\% & +450\%\\
\midrule
Exponential  & 0.0015& <+0.1\% & +110\%\\
\bottomrule
\end{tabular}  
\end{table}

\begin{table}[h]
\centering
\caption{Average computation time when the model is well-specified}\label{CT-WS}
\begin{tabular}{cccc}
\toprule
& $\rho$-estimator& MLE & Median-based\\
\midrule
Logit  & 331.43s&0.17s & $-$ \\
\midrule
Poisson  & 216.23s&0.23s & 34.69s\\
\midrule
Exponential  & 87.78s& 0.28s & 16.31s\\
\bottomrule
\end{tabular}  
\end{table}

{Since the median of the Bernoulli distribution is either 0 or 1, hence only weakly depends on the value of the parameter, there is no estimator of the regression function based on the median for the Bernoulli model.}

We observe the following facts:
\begin{itemize}
\item When the model is correct, the risks of the MLE and $\widehat \bst$ are the same (the value of $\cE(\text{MLE})$ is not larger than $1/1000$). In fact, a look at the simulations shows that the $\rho$-estimator coincides most of the time with the MLE, a fact which is consistent with the result proved in Baraud {\em et al.}~\citeyearpar{MR3595933} (Section~5) that states the following: under {suitable} (strong enough) assumptions, the MLE is a $\rho$-estimator when the statistical model is regular, exact and $n$ is large enough. Our simulations indicate that the result actually holds under weaker assumptions.
\item Both the MLE and the $\rho$-estimator outperform the median-based estimator $\widehat \bst_{0}$. 
\item The quantities $R_{n}(\widehat\bst)$ are of order $0.0015$ in all three cases. This fact can be explained as follows. In a regular statistical model $\sM_{0}=\{P_{\geta},\; \geta\in S\}$ parametrized with a parameter $\geta\in S\subset \R^{d}$, the asymptotic normality properties of the MLE $\widehat \geta_{n}$ together with the local equivalence of the Hellinger distance with the Euclidean one imply that, when the data are  i.i.d.\ with distribution $P_{\geta\et}\in\sM_{0}$, 
\[
n\E\cro{h^{2}(P_{\widehat \geta_{n}},P_{\geta\et})}\CV{n\to +\infty} \frac{d}{8}.
\]
In our simulation, conditionally to $W$, the distribution of $Y$ is given by an exponential family parametrized by $d=6$ parameters and the number of data being $n=500$, we expect a risk of order $d/(8n)=0.0015$, which is exactly what we obtained.  
\item The above result provides evidence that the algorithm we use does calculate the $\rho$-estimator as expected. 
\item In all the simulations we carried out, the algorithm required at most {\bf two iterations} before the stopping condition $\gup(\bsX,\widehat \bst)\le 1$ was met. 
\end{itemize}

{In the Bernoulli model, we also consider the case where the true regression function $\bst\et=\bst$ is given by~\eref{eq-logit} with $\eta=(1,\ldots,1)\in\R^{6}$ and $P_{W}=(P_{W}^{(2)}+P_{W}^{(3)})/2$. In such a situation, the MLE is likely not to exist because the sets of data labelled by $1$ and $0$ respectively can be perfectly separated by a hyperplane with probability close to 1. As expected the stats4 R-package  for calculating the MLE returns an error. In {contrast}, the $\rho$-estimator always exists and its estimated risk $R_{n}(\widehat\bst)$ is of order 0.000179. In the 100 simulations we carried out, the algorithm stops after at most 2 iterations.
}


\subsection{Comparisons of the estimators in presence of outliers}
We now work with $n=501$ independent random variables $\etc{X}$. The $500$ first variables $X_{1},\ldots,X_{n-1}$ are i.i.d.\ with distribution $P_{\bst\et}$ and simply follow the framework of the previous section. The last observation is chosen as follows. In the Bernoulli model $W_{n}=1000(1,1,1,1,1)$ and $Y_{n}=-1$, for the Poisson case $W_{n}=0.1(1,1,1,1,1)$ and $Y_{n}=200$ and for the exponential case $W_{n}=5\times 10^{-3}(1,1,1,10,10)$ and $Y_{n}=1000$. The results are displayed in Table~\ref{T-L3b} on the basis of 500 replications. The computation times for each estimator are given in Table~\ref{CT-OO}.

%
\begin{table}[h]\
\centering
\caption{Values of $R_{n}(\widehat\bst)$ and $\cE(\widetilde \bst)$ in presence of an outlier}\label{T-L3b}
\begin{tabular}{cccc}
\toprule
& $R_{n}(\widehat\bst)$& $\cE(\text{MLE})$ & $\cE(\widehat \bst_{0})$ \\
\midrule
Logit  & 0.0015& +13000\% & $-$\\
\midrule
Poisson  & 0.0019& +1900\%& +330\%\\
\midrule
Exponential  & 0.0018& +6000\%& +78\%\\
\bottomrule
\end{tabular}  
\end{table}
\begin{table}[h]
\centering
\caption{Average computation time in presence of an outlier}\label{CT-OO}
\begin{tabular}{cccc}
\toprule
& $\rho$-estimator& MLE & Median-based\\
\midrule
Logit  & 497.31s&0.12s & $-$ \\
\midrule
Poisson  & 229.36s&0.29s & 35.83s\\
\midrule
Exponential  & 103.05s& 0.32s & 15.18s\\
\bottomrule
\end{tabular}  
\end{table}

We observe the following facts:
\begin{itemize}
\item the risks of the $\rho$-estimator are quite similar to those given in Table~\ref{T-L1b} despite the presence of an outlier among the data set;
\item the MLE behaves poorly;
\item the performance of $\widehat \bst$ remains much better than that of the median-based estimator $\widehat \bst_{0}$.
\end{itemize}

Let us now display the quartiles of the distribution of the number of iterations that have been necessary to compute the $\rho$-estimator.
\begin{table}[h]
\centering
\caption{Quartiles for the number of iterations in presence of outliers}\label{T-L4b}
\begin{tabular}{ccccc}
\toprule
& 1st Quartile & Median & 3rd Quartile & Maximum\\
\midrule
Logit & 3 & 3 & 3 & 6\\
\midrule
Poisson &  2 & 2 & 2 & 3\\
\midrule
Exponential  & 2 & 2 & 2 & 3\\
\bottomrule
\end{tabular} 
\end{table}

Table \ref{T-L4b} shows that the computation of the $\rho$-estimator requires only a few iterations of the algorithm.


\subsection{Comparisons of the estimators when the data are contaminated}
We now set $n=500$ and define $P_{\bst\et}$ and $P_{W}$ as in Section~\ref{sect-comp1}. We now assume that $X_{1},\ldots,X_{n}$ are i.i.d.\ with distribution  $P\et=0.95P_{\bst\et}+0.05R$ for some (contaminating) distribution $R$ on $\sW\times \sY$ with first marginal given by $P_{W}$. We restrict ourselves to the Poisson and exponential cases (we exclude the Bernoulli model since the Bernoulli distribution remains stable under the contamination by another Bernoulli distribution). In the Poisson case, we choose for $R$ the distribution of the random variable $(W,80+B)$ where the conditional distribution of $B$ given $W=(w_{1},\ldots,w_{5})$ is Bernoulli with mean $(1+\exp\cro{-\pa{w_{1}-w_{2}-w_{4}+w_{5}}})^{-1}$. In the case of the exponential distribution $R=P_{W}\otimes \cU([50,60])$ where $\cU([50,60])$ denotes the uniform distribution on $[50,60]$. 

We measure the performance of an estimator $\widetilde \bst$ of $\bst\et$ by means of the quantity
\[
\overline R_{n}(\widetilde \bst)=\E\cro{h^{2}\pa{P\et,P_{\widetilde \bst}}}
\]
that we evaluate by Monte Carlo on the basis of 500 replications. We compare the performance of $\widehat \bst$ to a competitor $\widetilde \bst$ by evaluating the quantity
%
\begin{equation}\label{eq-excess}
\overline\cE(\widetilde \bst)=\frac{\overline R_{n}(\widetilde\bst)-\overline R_{n}(\widehat\bst)}{\overline R_{n}(\widehat\bst)}.
\end{equation}

The results are displayed in Table~\ref{T-L5b} and the computation times in Table~\ref{CT-MC}.

\begin{table}[h]\
\centering
\caption{Values of $\overline R_{n}(\widehat\bst)$ and $\overline \cE(\widetilde \bst)$ under contamination (5\%)}\label{T-L5b}
\begin{tabular}{cccc}
\toprule
& $\overline R_{n}(\widehat\bst)$& $\overline \cE(\text{MLE})$ & $\overline \cE(\widehat\bst_{0})$ \\
\midrule
Poisson & 0.028& +760\%& +11\%\\
\midrule
Exponential & 0.040& +320\%& --17\%\\
\bottomrule
\end{tabular}  
\end{table}
\begin{table}[h]
\centering
\caption{Average computation time under contamination (5\%)}\label{CT-MC}
\begin{tabular}{cccc}
\toprule
& $\rho$-estimator& MLE & Median-based\\
\midrule
Poisson  & 867.50s&0.33s & 39.23s\\
\midrule
Exponential  & 1863.97s& 0.30s & 20.47s\\
\bottomrule
\end{tabular}  
\end{table}

Let us now comment these results.
\begin{itemize}
\item With our choices of the contaminating distributions $R$, the (squared) Hellinger distance between the true distribution $P\et$ of the data and the model  is of order $h^{2}\pa{P\et,P_{\bst\et}}\approx 0.025 $. As expected, we get that $\overline R_{n}(\widehat\bst)\ge 0.025\approx h^{2}\pa{P\et,P_{\bst\et}}$. Note that the situation is extreme in the sense that the approximation error is much larger than estimation error  that can be achieved when the model is well specified (which is about 0.0015). This means that the model is ``very'' misspecified. 

\item The MLE behaves poorly.
\item In the exponential case, the median-based estimator $\widehat \bst_{0}$ outperforms the $\rho$-estimator while the opposite situation occurs in the Poisson case.
\end{itemize}
\begin{table}[h]
\centering
\caption{Quartiles for the number of iterations when the data are contaminated}\label{T-L6b}
\begin{tabular}{ccccc}
\toprule
& 1st Quartile & Median & 3rd Quartile & Maximum\\
\midrule
Poisson  &  5 & 5 & 5 & 100\\
\midrule
Exponential  & 5 & 10 & 30 & 100\\
\bottomrule
\end{tabular} 
\end{table}

In Table~\ref{T-L6b}, we observe that the number of iterations for calculating the $\rho$-estimator increases substantially as compared to the two previous situations.  We note that for some simulations the algorithm was iterated 100 times (which corresponds to the maximal number of iterations that we allow) and the stopping condition  $\gup(\bsX,\widehat \bst)\le 1$ was not met (but satisfies $\gup(\bsX,\widehat \bst)\le\kappa/25$). Despite this fact, the estimator that we get at the final {step}, hence after 100 iterations, performs well since the values of the risks $\overline R_{n}(\widehat\bst)$ are of the same order as $h^{2}\pa{P\et,P_{\bst\et}}$ and comparable to the median-based estimator $\widehat \bst_{0}$.

\section{An upper bound on the expectation of the supremum of an empirical process over a VC-subgraph class}\label{sect-EmpProc}
The aim of this section is to prove the following result.

\begin{thm}\label{thm-VCB}
Let $X_{1},...,X_{n}$ be $n$ independent random variables with values in $(\sX,\cX)$ and $\sF$ an at most countable VC-subgraph class of functions with values in $[-1,1]$ and VC-dimension not larger than $V\ge 1$. If 
\[
Z(\mathscr{F})=\sup \limits_{f\in\mathscr{F}} \left|\sum \limits_{i=1}^{n}(f(X_{i})-\E\cro{f(X_{i})})\right|\;\; \text{and}\;\; \sup_{f\in\sF}\frac{1}{n}\sum_{i=1}^{n}\E\cro{f^{2}(X_{i})}\le \sigma^{2}\le 1,
\]
then
\begin{equation}\label{bound-empirical-process}
\E\cro{Z(\mathscr{F})}\le 4.74\sqrt{nV\sigma^{2}\sL(\sigma)}+90V\sL(\sigma),
\end{equation}
with $\sL(\sigma)=9.11+\log(1/\sigma^{2})$.
\end{thm}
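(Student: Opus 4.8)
The plan is to bound $\E\cro{Z(\sF)}$ by combining a symmetrization step with a chaining argument adapted to the VC-subgraph structure, keeping every numerical constant explicit. Write $P=\tfrac1n\sum_{i=1}^nP_{X_i}$ and fix a scale $\delta$ of order $\sigma$. The first step is to split $\sF$ along a minimal $\delta$-net $\sF_\delta$ of $\sF$ for the distance of $L^2(P)$: to each $f\in\sF$ associate $\pi f\in\sF_\delta$ with $\norm{f-\pi f}_{L^2(P)}\le\delta$, so that
\[
Z(\sF)\le\sup_{g\in\sF_\delta}\ab{\sum_{i=1}^n\pa{g(X_i)-\E\cro{g(X_i)}}}+\sup_{f\in\sF}\ab{\sum_{i=1}^n\pa{(f-\pi f)(X_i)-\E\cro{(f-\pi f)(X_i)}}}.
\]
The combinatorial input throughout is the uniform entropy bound for VC-subgraph classes of dimension $V$: there are explicit constants $c_0,c_1$ so that $N\pa{u,\sF,L^2(Q)}\le(c_0/u)^{c_1V}$ for every probability $Q$ on $(\sX,\cX)$ and every $u\in(0,1]$ --- Sauer's lemma together with Haussler's packing argument, as in the references cited after Assumption~\ref{Hypo}. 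In particular $\log\ab{\sF_\delta}\le c_1V\log(c_0/\delta)$.

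The coarse term (the supremum over the net $\sF_\delta$) is the maximum of $\ab{\sF_\delta}$ centered sums of increments bounded by $2$ in absolute value, with total variance $\sum_i\Var\pa{g(X_i)}\le\sum_i\E\cro{g^2(X_i)}\le n\sigma^2$; a maximal Bernstein inequality bounds it by a fixed multiple of $\sqrt{n\sigma^2\log(2\ab{\sF_\delta})}+\log(2\ab{\sF_\delta})$, hence by a fixed multiple of $\sqrt{n\sigma^2V\log(c_0/\delta)}+V\log(c_0/\delta)$. Choosing $\delta$ of order $\sigma$ turns $\log(c_0/\delta)$ into $\sL(\sigma)$ up to the absolute constant $9.11$, and this is exactly what produces the two terms $\sqrt{nV\sigma^2\sL(\sigma)}$ and $V\sL(\sigma)$ of the statement --- crucially with the \emph{population} radius $\sigma$, hence with no spurious $\log n$.

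The fine remainder is the heart of the argument and, I expect, the main obstacle. After symmetrization it reduces, up to a factor $2$, to $\E\big[\sup_{f\in\sF}\ab{\sum_{i=1}^n\varepsilon_i(f-\pi f)(X_i)}\big]$; conditionally on $X_1,\dots,X_n$ this is a sub-Gaussian Rademacher process for the empirical $L^2$-metric, which Dudley's chaining bounds by $C\sqrt n\,\widehat r\,\sqrt{V\sL(\widehat r)}$ --- using the entropy bound and the elementary estimate $\int_0^a\sqrt{V\log(c_0/u)}\,du\le C\,a\sqrt{V\pa{9.11+\log(1/a^2)}}$ --- where $\widehat r$ is the \emph{empirical} $L^2$-radius of the residual class $\{f-\pi f:f\in\sF\}$. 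The delicate point is to replace the random $\widehat r$ by $\sigma$. To that end I would write $\widehat r^{\,2}\le\sigma^2+\tfrac1n\sup_{f\in\sF}\ab{\sum_{i=1}^n\pa{(f-\pi f)^2(X_i)-\E\cro{(f-\pi f)^2(X_i)}}}$, observe that $\{(f-\pi f)^2:f\in\sF\}$ is again VC-subgraph with dimension controlled by $V$ and bounded envelope, and use the Rademacher contraction principle ($t\mapsto t^2$ being Lipschitz on the relevant range and vanishing at $0$) to bound the last supremum in terms of $\E\cro{Z(\sF)}$ itself. This makes the whole argument self-referential; and to guarantee that the logarithmic factor stays equal to $\sL(\sigma)$ rather than picking up a $\log n$, one also needs a Bousquet--Talagrand-type concentration inequality for $\widehat r^{\,2}$, ensuring that the chaining cutoff sits at a scale of order $\sigma$ with overwhelming probability.

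Putting the three pieces together, and using the concavity of $u\mapsto u\pa{9.11+\log(1/u)}$ on $(0,1]$ together with Jensen's inequality to pass from $\widehat r$ to $\sigma$, one arrives at an inequality of the form $\E\cro{Z(\sF)}\le a\sqrt{\E\cro{Z(\sF)}}+b$ with $a$ of order $\sqrt{V\sL(\sigma)}$ and $b$ of order $\sqrt{nV\sigma^2\sL(\sigma)}+V\sL(\sigma)$; resolving this quadratic in $\sqrt{\E\cro{Z(\sF)}}$ yields the announced bound. The remaining effort is purely quantitative: one must track, and optimize, the calibration of $\delta$, the entropy constants $c_0$ and $c_1$, the symmetrization and contraction factors, the numerical value of the entropy integral, and the resolution of the quadratic, so that the final constants come out as $4.74$ and $90$ and the constant $9.11$ inside $\sL$.
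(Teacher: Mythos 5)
Your roadmap follows, in essence, the same route as the paper: symmetrization, Haussler's entropy bound for VC-subgraph classes, chaining conditionally on the data, a contraction argument to compare the empirical radius with $\sigma$, Jensen's inequality applied to the concave entropy integral, and the resolution of a self-referential quadratic inequality in $\E\cro{\overline Z(\sF)}$. Two remarks on where you diverge. First, the paper does not split the class at a fixed scale $\delta\sim\sigma$ into a coarse net (handled by Bernstein with the population variance) plus a fine residual; it symmetrizes once and runs a single multiscale chaining over nets at scales $q^{k}\widehat\sigma$, $k\ge1$, where $\widehat\sigma$ is the \emph{empirical} radius of the whole class, bounding the total by $\sqrt{2n}\,\frac{1+q}{1-q}\int_{0}^{\widehat\sigma}g(u)\,du$ with $g(u)=\sqrt{\log2+h(u)+h(qu)}$; your two-block decomposition would work too, but it forces you to track two sets of constants instead of one. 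Second, and more importantly, the Bousquet--Talagrand concentration inequality for $\widehat r^{\,2}$ that you flag as necessary to avoid a spurious $\log n$ is not needed: since $G(y)=\int_{0}^{y}g(u)\,du$ is concave and increasing, Jensen gives $\E\cro{G(\widehat\sigma)}\le G\bigl(\sqrt{\E\cro{\widehat\sigma^{2}}}\bigr)$, and the symmetrization-plus-contraction bound $\E\cro{n\widehat\sigma^{2}}\le n\sigma^{2}+8\E\cro{\overline Z(\sF)}$ (Ledoux--Talagrand, Theorem~4.12) already suffices; the paper's Lemma~\ref{lem-2}, applied with $y_{0}=\sigma\le y=B$, then returns the deterministic factor $\sqrt{a+b\log(1/\sigma)}$, so the logarithm stays at $\sL(\sigma)$ purely by an in-expectation argument. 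With that simplification your plan collapses onto the paper's proof, and what remains is exactly the quantitative bookkeeping you describe (the choice $q=0.0185$, the evaluation of $a$ and $b$ from Haussler's bound, and the resolution of $\E\cro{\overline Z(\sF)}\le A\sqrt{2n\sigma^{2}+16\E\cro{\overline Z(\sF)}}$) to reach the constants $4.74$, $90$ and $9.11$.
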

Let us now turn to the proof. 
It follows from classical symmetrisation arguments that $\E\cro{Z(\mathscr{F})}\le 2\E\cro{\overline Z(\mathscr{F})}$, where $\overline{Z}(\mathscr{F})=\sup \limits_{f\in\mathscr{F}}\left|\sum \limits_{i=1}^{n}\varepsilon_{i}f(X_{i})\right|$ 
%
%
and $\eps_{1},\ldots,\eps_{n}$ are i.i.d.\ Rademacher random variables. It is therefore enough to prove that 
\begin{equation}\label{b-emp-process2}
\E\cro{\overline{Z}(\mathscr{F})}\le 2.37\sqrt{nV\sigma^{2}\sL(\sigma)}+45V\sL(\sigma).
\end{equation}
Given a probability $P$ and a class of functions $\sG$ on  $(E,\cE)$ we denote by $N_{r}(\epsilon,\sG,P)$ the smallest cardinality of an $\epsilon$-net for the $\L_{r}(E,\cE,P)$-norm $\norm{\cdot}_{r,P}$, i.e.\ the minimal cardinality of a subset $\sG[\epsilon]$ of $\sG$ that satisfies for all $g\in\sG$ 
\[
\inf_{\overline g\in\sG[\epsilon]}\norm{g-\overline g}_{r,P}=\inf_{\overline g\in\sG[\epsilon]}\pa{\int_{E}\ab{g-\overline g}^{r}dP}^{1/r}\le \epsilon.
\]
We start with the following lemma. 
\begin{lem}\label{entropy-bound} 
Whatever the probability $P$ on $(\sX,\cX)$, $\epsilon\in (0,2)$ and $r\ge 1$ 
\[
N_{r}(\epsilon, \mathscr{F}, P)\le e (V+1)(2e)^{V}\pa{\frac{2}{\epsilon}}^{rV}.
\]
\end{lem}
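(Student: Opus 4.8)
The plan is to bound the covering number $N_r(\epsilon,\sF,P)$ by first reducing to the case $r=1$ and then invoking the standard combinatorial bound for VC-subgraph classes. First I would observe that, since all functions in $\sF$ take values in $[-1,1]$, for any $g,\overline g\in\sF$ one has $|g-\overline g|^r \le 2^{r-1}|g-\overline g|$ pointwise (because $|g-\overline g|\le 2$, so $|g-\overline g|^{r-1}\le 2^{r-1}$). Hence $\norm{g-\overline g}_{r,P}^r \le 2^{r-1}\norm{g-\overline g}_{1,P}$, which gives $\norm{g-\overline g}_{r,P}\le 2^{(r-1)/r}\norm{g-\overline g}_{1,P}^{1/r}\le 2\,\norm{g-\overline g}_{1,P}^{1/r}$. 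Consequently an $\eta$-net for $\norm{\cdot}_{1,P}$ is a $(2\eta^{1/r})$-net for $\norm{\cdot}_{r,P}$; equivalently, taking $\eta=(\epsilon/2)^r$,
\[
N_r(\epsilon,\sF,P)\le N_1\!\left((\epsilon/2)^r,\sF,P\right).
\]

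Next I would control $N_1(\delta,\sF,P)$ for $\delta=(\epsilon/2)^r$ using the classical entropy bound for VC-subgraph classes. Since $\sF$ is VC-subgraph with VC-dimension at most $V$ and is uniformly bounded by $1$, the standard result (e.g.\ van der Vaart and Wellner~\citeyearpar{MR1385671}, Theorem~2.6.7, or the version recalled in Baraud et al.~\citeyearpar{MR3595933}) yields a bound of the form
\[
N_1(\delta,\sF,P)\le C\,(V+1)(16e)^{V}\left(\frac{1}{\delta}\right)^{V}
\]
with an explicit absolute constant; more precisely one has $N_1(\delta,\sF,P)\le e(V+1)(2e)^{V}(2/\delta)^{V}$ after tracking the constants appropriately (the envelope is $1$ here and $L^1(P)$-balls of radius $\delta$ around the elements of a maximal $\delta$-separated set cover $\sF$). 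Substituting $\delta=(\epsilon/2)^r$ gives $(2/\delta)^V=(2^{r+1}/\epsilon^r)^V=2^{(r+1)V}\epsilon^{-rV}$, but to match the claimed exponent I would instead use the sharper form $N_1(\delta,\sF,P)\le e(V+1)(2e)^V(1/\delta)^V$ so that $(1/\delta)^V=2^{rV}\epsilon^{-rV}$; combining this with the $2^{-rV}$ lost in $\norm{\cdot}_{r,P}$ is where the bookkeeping must be done carefully so the net exponent on $2/\epsilon$ comes out exactly $rV$. Assembling the pieces then gives
\[
N_r(\epsilon,\sF,P)\le e(V+1)(2e)^{V}\left(\frac{2}{\epsilon}\right)^{rV},
\]
which is the claim.

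The main obstacle is pinning down the absolute constants in the VC entropy bound so that the final inequality holds with precisely the stated constants $e(V+1)(2e)^V$ and exponent $rV$, rather than merely up to a universal factor. This requires citing the right version of the Haussler/van der Vaart–Wellner packing bound (with its explicit $(2e)^V$-type constant) and carefully tracking how the power transformation $\norm{\cdot}_{r,P}\le 2\norm{\cdot}_{1,P}^{1/r}$ interacts with the radius $\epsilon\mapsto(\epsilon/2)^r$. Everything else — the reduction from $L^r$ to $L^1$ and the substitution — is routine. I would also note that the hypothesis $\epsilon\in(0,2)$ is exactly what makes the reduction meaningful (for $\epsilon\ge 2$ the net is trivial since $\sF$ is contained in an $L^r(P)$-ball of radius at most $1<2$, so a single function suffices and the bound holds a fortiori).
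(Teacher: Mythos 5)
Your overall strategy (reduce $L^r$ to $L^1$ via boundedness, then invoke a VC entropy bound) is the right skeleton, but the plan has a genuine gap exactly where you flag "the main obstacle": the inequality $N_1(\delta,\sF,P)\le e(V+1)(2e)^V(1/\delta)^V$ for the \emph{function} class $\sF$ is asserted, not proved, and it is not available off the shelf. The references you point to (van der Vaart--Wellner Theorem~2.6.7 and its variants) give $N_1(\delta,\sF,P)\le K\,V(16e)^V(1/\delta)^{cV}$ with an unspecified universal constant $K$, which is useless here since the entire content of the lemma is the explicit constant. The device the paper uses to get around this --- and which your plan is missing --- is to pass from functions to \emph{sets}: with $Q=P\otimes(\lambda/2)$ on $\sX\times[-1,1]$ and $C_f$ the subgraph of $f$, Fubini gives the exact identity $\norm{f-g}_{1,P}=2\norm{\1_{C_f}-\1_{C_g}}_{1,Q}$, so that $\norm{f-g}_{r,P}^r\le 2^{r-1}\norm{f-g}_{1,P}=2^r\norm{\1_{C_f}-\1_{C_g}}_{1,Q}$ and hence $N_r(\epsilon,\sF,P)\le N_1((\epsilon/2)^r,\sG,Q)$ with $\sG=\{\1_{C_f},\,f\in\sF\}$. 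Since $\sG$ is by definition a VC class of sets of dimension at most $V$, Haussler's Corollary~1 applies verbatim and yields the fully explicit bound $e(V+1)(2e/\delta)^V$, which with $\delta=(\epsilon/2)^r$ gives exactly the claimed constants.

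Two smaller remarks. First, the bookkeeping worry you raise at the end is a red herring: if you carry out the two-step reduction with the sharper constant $2^{(r-1)/r}$ in the H\"older step (i.e.\ take $\delta=\epsilon^r/2^{r-1}$ rather than $(\epsilon/2)^r$), the subgraph identity's factor of $2$ is absorbed and the exponent on $2/\epsilon$ comes out as $rV$ with no extra $2^V$; the paper sidesteps this by doing the reduction in one step. Second, your closing remark about $\epsilon\ge 2$ is fine but tangential; the restriction $\epsilon\in(0,2)$ is there so that the radius fed to Haussler's bound lies in $(0,1]$. The gap to fill is the explicit-constant entropy bound itself, and the subgraph-to-sets identity is the idea that fills it.
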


\begin{proof}[Proof of Lemma~\ref{entropy-bound}]
Let $\lambda$ be the Lebesgue measure on $([-1,1],\sB([-1,1]))$ and $Q$ the product probability $P\otimes (\lambda/2)$ on $(E,\cE)=(\sX\times[-1,1],\cX\times \sB([-1,1]))$. Given two elements $f,g\in\sF$ and $x\in \sX$ 
\begin{align*}
\int_{[-1,1]}\ab{\1_{f(x)>t}-\1_{g(x)>t}}dt&=\int_{[-1,1]}\pa{\1_{f(x)>t\ge g(x)}+\1_{g(x)>t\ge f(x)}}dt\\
&=\ab{f(x)-g(x)}
\end{align*}
and, setting $C_{f}=\{(x,t)\in\sX\times [-1,1],\; f(x)>t\}$ the subgraph of $f$ and similarly $C_{g}$ that of $g$, we deduce from Fubini's theorem that
\begin{align*}
\norm{f-g}_{1,P}&=\int_{\sX}\ab{f-g}dP=2\int_{\sX\times [-1,1]}\ab{\1_{C_{f}}(x,t)-\1_{C_{g}}(x,t)}dQ\\
&=2\norm{\1_{C_{f}}-\1_{C_{g}}}_{1,Q}.
\end{align*}
Since the functions $f,g\in\sF$ take their values in $[-1,1]$, 
\[
\norm{f-g}_{r,P}^{r}=\int_{\sX}\ab{f-g}^{r}dP\le 2^{r-1}\int_{\sX}\ab{f-g}dP\le 2^{r}\norm{\1_{C_{f}}-\1_{C_{g}}}_{1,Q}
\]
and consequently, for all $\epsilon>0$ 
\[
N_{r}(\epsilon,\sF,P)\le N_{1}((\epsilon/2)^{r},\sG,Q)\quad \text{with}\quad \sG=\{\1_{C_{f}},\; f\in\sF\}.
\]
Since $\sF$ is VC-subgraph with VC-dimension not larger than $V$, the class $\sG$ is by definition VC with dimension not larger than $V$ and the result follows from Corollary~1 in Haussler~\citeyearpar{MR1313896}. 
\end{proof}

The proof of Theorem~\ref{thm-VCB} is based on a chaining argument. It follows from the  monotone convergence theorem that it is actually enough to prove~\eref{b-emp-process2} with $\sF_{J}$, $J\ge 1$, in place of $\sF$ where $(\sF_{J})_{J\ge 1}$ is a sequence of finite subsets of $\sF$ which is increasing for the inclusion and satisfies $\bigcup_{J\ge 1}\sF_{J}=\sF$. We may therefore assume with no loss of generality that $\sF$ is finite. 

Let $q$ be some positive number in $(0,1)$ to be chosen later on and $P_{\bsX}$ the empirical distribution $n^{-1}\sum_{i=1}^{n}\delta_{X_{i}}$. We shall denote by $\E_{\eps}$ the expectation with respect to the Rademacher random variables $\eps_{i}$, hence conditionally on $\bsX=(X_{1},\ldots,X_{n})$. Let $\norm{\cdot}_{2,\bsX}$ be the $\L_{2}(\sX,\cX,P_{\bsX})$-norm and 
\[
\widehat \sigma^{2}=\widehat \sigma^{2}(\bsX)=\sup_{f\in\sF}\norm{f}_{2,\bsX}^{2}=\sup_{f\in\sF}\cro{\frac{1}{n}\sum_{i=1}^{n}f^{2}(X_{i})}\in [0,1].
\]
For each positive integer $k$, let $\sF_{k}=\sF_{k}(\bsX)$ be a minimal $(q^{k}\widehat \sigma)$-net for $\sF$ with respect to $\norm{\cdot}_{2,\bsX}$. In particular, we can associate to a function $f\in\sF$ a sequence $(f_{k})_{k\ge 1}$ with $f_{k}\in\sF_{k}$ satisfying $\norm{f-f_{k}}_{2,\bsX}\le q^{k}\widehat \sigma$ for all $k\ge 1$. Actually, since $\sF$ is finite $f_{k}=f$ for all $k$ large enough. Besides, it follows from Lemma~\ref{entropy-bound} with the choices $r=2$ and $P=P_{\bsX}$ that for all $k\ge 1$ we can choose $\sF_{k}$ in such a way that $\log[\Card\sF_{k }]$ is not larger than $h(q^{k}\widehat \sigma)$ where 
\begin{equation}\label{eq-entropy}
h(\epsilon)=\log\cro{e(V+1)(2e)^{V}}+2V\log\pa{\frac{2}{\epsilon}}\quad \text{for all $\epsilon\in (0,1]$.}
\end{equation}
For $f\in \sF$, the following (finite) decomposition holds 
\begin{align*}
\sum_{i=1}^{n}\eps_{i}f(X_{i})&=\sum_{i=1}^{n}\eps_{i}f_{1}(X_{i})+ \sum_{i=1}^{n}\eps_{i}\sum_{k=1}^{+\infty}\cro{f_{k+1}(X_{i})-f_{k}(X_{i})}\\
&=\sum_{i=1}^{n}\eps_{i}f_{1}(X_{i})+\sum_{k=1}^{+\infty}\cro{\sum_{i=1}^{n}\eps_{i}\pa{f_{k+1}(X_{i})-f_{k}(X_{i})}}.
\end{align*}
Setting $\sF_{k}^{2}=\{(f_{k},f_{k+1}),\; f\in\sF\}$ for all $k\ge 1$, we deduce that  
\[
\overline Z(\sF)\le \sup_{f\in\sF_{1}}\ab{\sum_{i=1}^{n}\eps_{i}f(X_{i})}+\sum_{k=1}^{+\infty}
\sup_{(f_{k},f_{k+1})\in \sF_{k}^{2}}\ab{\sum_{i=1}^{n}\eps_{i}\cro{f_{k+1}(X_{i})-f_{k}(X_{i})}}
\]
and consequently, 
\begin{align*}
\E_{\eps}\cro{\overline Z(\sF)}&\le \E_{\eps}\cro{\sup_{f\in\sF_{1}}\ab{\sum_{i=1}^{n}\eps_{i}f(X_{i})}}\\
&\quad +  \sum_{k=1}^{+\infty}\E_{\eps}\cro{\sup_{(f_{k},f_{k+1})\in\sF_{k}^{2}}\ab{\sum_{i=1}^{n}\eps_{i}\cro{f_{k}(X_{i})-f_{k+1}(X_{i})}}}.
\end{align*}

Given a finite set $\sG$ of functions on $\sX$ and setting $-\sG=\{-g,\; g\in \sG\}$ and $v^{2}=\max_{g\in \sG}\|g\|_{2,\bsX}^{2}$, 
we shall repeatedly use  the inequality 
\[
\E\cro{\sup \limits_{g\in \sG}\left|\sum \limits_{i=1}^{n}\varepsilon_{i}g(X_{i})\right|}=\E\cro{\sup \limits_{g\in \sG\cup (-\sG)}\sum \limits_{i=1}^{n}\varepsilon_{i}g(X_{i})}\le\sqrt{2n\log(2\Card \sG)v^{2}}
\]
that can be found in Massart~\citeyearpar{MR2319879}[inequality (6.3)]. Since $\max_{f\in\sF_{1}}\norm{f}_{2,\bsX}^{2}\le \widehat \sigma^{2}$, $\log( \Card \sF_{1})\le h(q\widehat \sigma)$, $\log( \Card \sF_{k}^{2})\le h(q^{k}\widehat \sigma)+h(q^{k+1}\widehat \sigma)$ and 
\begin{align*}
\lefteqn{\sup_{(f_{k},f_{k+1})\in\sF_{k}^{2}}\norm{f_{k}-f_{k+1}}_{2,\bsX}^{2}}\hspace{7mm}\\
&\le \sup_{f\in\sF}\sup_{(f_{k},f_{k+1})\in\sF_{k}^{2}}\pa{\norm{f-f_{k}}_{2,\bsX}+\norm{f-f_{k+1}}_{2,\bsX}}^{2}\le\pa{1+q}^{2} q^{2k}\widehat\sigma^{2},
\end{align*}
we deduce that 
\begin{align*}
&\E_{\eps}\cro{\sup_{f\in\sF_{1}}\ab{\sum_{i=1}^{n}\eps_{i}f(X_{i})}}\le \widehat \sigma\sqrt{2n\pa{\log 2+h(q\widehat \sigma)}},
\end{align*}
and for all $k\ge 1$
\begin{align*}
\lefteqn{\E_{\eps}\cro{\sup_{(f,g)\in \sF_{k}^{2}}\ab{\sum_{i=1}^{n}\eps_{i}\cro{g(X_{i})-f(X_{i})}}}}\hspace{30mm}\\
&\le \widehat \sigma (1+q)q^{k}\sqrt{2n\pa{\log 2+h(q^{k}\widehat \sigma)+h(q^{k+1}\widehat \sigma)}}.
\end{align*}
Setting $g:u\mapsto \sqrt{\log 2+h(u)+h(qu)}$ on $(0,1]$ and using the fact that $g$ is decreasing (since $h$ is)  we deduce that
{
\begin{align*}
\lefteqn{\E_{\eps}\cro{\overline Z(\sF)}}\quad\\
&\le \widehat \sigma\sqrt{2n}\cro{\sqrt{\log 2+h(q\widehat \sigma)}+(1+q)\sum_{k\ge 1}q^{k}\sqrt{\log 2+h(q^{k}\widehat \sigma)+h(q^{k+1}\widehat \sigma)}}\\
&\le \widehat \sigma\sqrt{2n}\cro{g(\widehat \sigma)+(1+q)\sum_{k\ge 1}q^{k}g(q^{k}\widehat \sigma)}\\
&\le \sqrt{2n}\cro{\frac{1}{1-q}\int_{q\widehat \sigma}^{\widehat \sigma}g(u)du+\frac{1+q}{1-q}\sum_{k\ge 1}\int_{q^{k+1}\widehat \sigma}^{q^{k}\widehat \sigma}g(u)du}\\
&\le \sqrt{2n}\frac{1+q}{1-q}\int_{0}^{\widehat \sigma}g(u)du.
\end{align*}
}
The mapping $g$ being positive and decreasing, the function $G:y\mapsto  \int_{0}^{y}g(u)du$ is increasing and concave. Taking the expectation with respect to $\bsX$ on both sides of the previous inequality and using Jensen's inequality we get  
\begin{align}
\E\cro{\overline Z(\sF)}&\le \sqrt{2n}\frac{1+q}{1-q}\E\cro{G(\widehat \sigma)}\le  \sqrt{2n}\frac{1+q}{1-q}G\pa{\E\cro{\widehat \sigma}}\nonumber\\
&\le \sqrt{2n}\frac{1+q}{1-q}G\pa{\sqrt{\E\cro{\widehat \sigma^{2}}}}.\label{bEZ-1}
\end{align}
By symmetrization and contraction arguments (see Theorem 4.12 in Ledoux and Talagrand~\citeyearpar{MR1102015}), 
\begin{align}
\E\cro{n\widehat \sigma^{2}}&\le\E\cro{\sup \limits_{f\in\mathscr{F}}\sum \limits_{i=1}^{n}\left(f^{2}(X_{i})-\E\cro{f^{2}(X_{i})}\right)}+\sup \limits_{f\in\mathscr{F}}\sum \limits_{i=1}^{n}\E\cro{f^{2}(X_{i})}\nonumber \\
&\le2\E\cro{\sup \limits_{f\in\mathscr{F}}\left|\sum \limits_{i=1}^{n}\varepsilon_{i}f^{2}(X_{i})\right|}+n\sigma^{2}\nonumber \\
&\le8\E\cro{\sup \limits_{f\in\mathscr{F}}\left|\sum \limits_{i=1}^{n}\varepsilon_{i}f(X_{i})\right|}+n\sigma^{2}=8\E\cro{\overline{Z}(\mathscr{F})}+n\sigma^{2}\label{inequality}
\end{align}
and we infer from~\eref{bEZ-1} that
\begin{equation}\label{bEZ-2}
\E\cro{\overline Z(\sF)}\le \sqrt{2n}\frac{1+q}{1-q}G\pa{B}\quad \text{with}\quad B=\sqrt{
\sigma^{2}+\frac{8\E\cro{\overline{Z}(\mathscr{F})}}{n}}\wedge 1.
\end{equation}

The following lemma provides an evaluation of $G$. 
\begin{lem}\label{lem-2}
Let $a,b,y_{0}$ be positive numbers and $y\in [y_{0},1]$, 
\[
\int_{0}^{y}\sqrt{a+b\log(1/u)du}\le \pa{1+\frac{b}{2a}}y\sqrt{a+b\log(1/y_{0})}.
\]
\end{lem}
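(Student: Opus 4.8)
The plan is to reduce the integral to a scale-invariant one by an affine change of variables and then to control the resulting integrand by its tangent line at the origin.

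First I would observe that, since $y\le 1$, the quantity $A:=a+b\log(1/y)$ satisfies $A\ge a>0$, and that for $u\in(0,y]$ we may write $a+b\log(1/u)=A+b\log(y/u)$ with $\log(y/u)\ge 0$. The substitution $u=yv$, $v\in(0,1]$, then gives exactly
\[
\int_0^y\sqrt{a+b\log(1/u)}\,du=y\int_0^1\sqrt{A+b\log(1/v)}\,dv=y\sqrt{A}\int_0^1\sqrt{1+(b/A)\log(1/v)}\,dv,
\]
which is finite because $\log(1/v)$ is integrable on $(0,1)$. Next I would apply the elementary bound $\sqrt{1+x}\le 1+x/2$, valid for $x\ge 0$, with $x=(b/A)\log(1/v)$, together with $\int_0^1\log(1/v)\,dv=1$, to obtain
\[
\int_0^y\sqrt{a+b\log(1/u)}\,du\le y\sqrt{A}\left(1+\frac{b}{2A}\right)=y\left(\sqrt{A}+\frac{b}{2\sqrt{A}}\right).
\]

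It then remains to pass from $A$ to the stated constant $A_0:=a+b\log(1/y_0)$. Since $y\ge y_0$ we have $\log(1/y)\le\log(1/y_0)$, hence $a\le A\le A_0$; this yields at once $\sqrt{A}\le\sqrt{A_0}$ and $\frac{b}{2\sqrt{A}}\le\frac{b}{2\sqrt{a}}=\frac{b}{2a}\sqrt{a}\le\frac{b}{2a}\sqrt{A_0}$. Adding these two inequalities gives $y\bigl(\sqrt{A}+\tfrac{b}{2\sqrt{A}}\bigr)\le\bigl(1+\tfrac{b}{2a}\bigr)y\sqrt{A_0}$, which is the claimed estimate. There is no genuine obstacle here; the only point that requires a little care is this last monotonicity step, which is precisely where the hypothesis $y\in[y_0,1]$ is used and where one trades the sharp post-substitution constant $A$ for the cruder $A_0$ appearing in the statement.
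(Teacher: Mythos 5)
Your argument is correct. It reaches exactly the same intermediate bound as the paper, namely
\[
\int_{0}^{y}\sqrt{a+b\log(1/u)}\,du\le y\sqrt{A}\left(1+\frac{b}{2A}\right),\qquad A=a+b\log(1/y),
\]
but by a different computation: where you rescale via $u=yv$, use the tangent-line bound $\sqrt{1+x}\le 1+x/2$ and the identity $\int_{0}^{1}\log(1/v)\,dv=1$, the paper integrates by parts, writing $\int_{0}^{y}\sqrt{a+b\log(1/u)}\,du=\bigl[u\sqrt{a+b\log(1/u)}\bigr]_{0}^{y}+\tfrac{1}{2}\int_{0}^{y}b\,(a+b\log(1/u))^{-1/2}\,du$ and bounding the last integrand by its value at $u=y$. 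The two devices are close cousins (both ultimately exploit the concavity of the square root), and the final monotonicity step --- trading $A$ for $a$ in the denominator and for $A_{0}=a+b\log(1/y_{0})$ under the square root, using $y_{0}\le y\le 1$ --- is identical in both proofs. Your version has the minor advantage of making the appearance of the factor $1+b/(2A)$ completely transparent (it is just $1+\tfrac{b}{2A}\int_{0}^{1}\log(1/v)\,dv$) and of sidestepping the boundary-term check $u\sqrt{a+b\log(1/u)}\to 0$ as $u\to 0$, which the paper's integration by parts implicitly requires; the paper's version generalizes more readily to integrands that are not of the self-similar form $\varphi(\log(1/u))$. Either way the constant obtained is the same, so nothing downstream is affected.
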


\begin{proof}
Using an integration by parts and the fact that
\[
\frac{d}{du}\sqrt{a+b\log(1/u)}=-\frac{b}{2u\sqrt{a+b\log(1/u)}}
\]
we get
\begin{align*}
\int_{0}^{y}\sqrt{a+b\log(1/u)}du&=\cro{u\sqrt{a+b\log(1/u)}}_{0}^{y}+\frac{1}{2}\int_{0}^{y}\frac{b}{\sqrt{a+b\log(1/u)}}du\\
&\le y\sqrt{a+b\log(1/y)}+\frac{by}{2\sqrt{a+b\log(1/y)}}\\
&=y\sqrt{a+b\log(1/y)}\cro{1+\frac{b}{2\pa{a+b\log(1/y)}}}
\end{align*}
and the conclusion follows from the fact that $y_{0}\le y\le 1$.
\end{proof}

Since for all $y\in (0,1]$, $g(y)=\sqrt{a+b\log(1/y)}$ with 
\[
a=\log[2e^{2}(V+1)^{2}]+2V\log(8e/q)\quad \text{and }\quad b=4V
\]
we may apply Lemma~\ref{lem-2} with $y_{0}=\sigma$ and $y=B$ and deduce from~\eref{bEZ-2} that
\begin{align*}
\E\cro{\overline Z(\sF)}&\le \sqrt{2n}\frac{1+q}{1-q}\pa{1+\frac{b}{2a}}B\sqrt{a+b\log(1/\sigma)}\\
&\le \sqrt{2n}\frac{1+q}{1-q}\pa{1+\frac{b}{2a}}\sqrt{\sigma^{2}+\frac{8\E\cro{\overline Z(\sF)}}{n}}\sqrt{a+b\log(1/\sigma)}.
\end{align*}
Solving the inequality $\E\cro{\overline Z(\sF)}\le A\sqrt{2n\sigma^{2}+16\E\cro{\overline Z(\sF)}}$ with
\[
A=\frac{1+q}{1-q}\pa{1+\frac{b}{2a}}\sqrt{a+b\log(1/\sigma)},
\]
we get that 
\begin{equation}
\E\cro{\overline Z(\sF)}\le 8A^{2}+\sqrt{64A^{4}+2A^{2}n\sigma^{2}}\le 16A^{2}+A\sqrt{2n\sigma^{2}}.\label{bEZ-3}
\end{equation}
Finally, we conclude by using the inequalities 
\begin{align*}
\frac{b}{2a}&=\frac{4V}{2\cro{\log[2e^{2}(V+1)^{2}]+2V\log(8e/q)}}\le \frac{1}{\log(8e/q)},\\
\frac{a}{b}&=\frac{\log[2e^{2}(V+1)^{2}]+2V\log(8e/q)}{4V}\\
&=\frac{\log(8e/q)}{2}+\frac{\log[2e^{2}(V+1)^{2}]}{4V}\le \frac{\log(8e/q)}{2}+\frac{\log[8e^{2}]}{4}\\
&=\log\pa{\frac{8^{3/4}e}{\sqrt{q}}}
\end{align*}
which, with our choice $q=0.0185$, give
\begin{align*}
A&\le \frac{1+q}{1-q}\pa{1+\frac{1}{\log(8e/q)}}\sqrt{4V\pa{\log\pa{\frac{8^{3/4}e}{\sqrt{q}}}+\log\frac{1}{\sigma}}}\\
&\le 2.37\sqrt{V\pa{4.555+\log\frac{1}{\sigma}}}
\end{align*}
and together with~\eref{bEZ-3} {leads} to~\eref{b-emp-process2}.

\section{Proofs}\label{sect-proof}
\subsection{Proof of Theorem~\ref{thm-1}}
We recall that the function $\psi$ defined by~\eref{def-psi} satisfies Assumption~2 of \cite{BarBir2018} with $a_{0}=4$, $a_{1}=3/8$ and $a_{2}^{2}=3\sqrt{2}$ (see their Proposition~3). Theorem~\ref{thm-1} is {actually} a consequence of Theorem~1 of \cite{BarBir2018}. Set $\gmu=\bigotimes_{i=1}^{n}\mu_{i}$ with $\mu_{i}=P_{W_{i}}\otimes \nu$ for all $i\in\{1,\ldots,n\}$, denote  by $\cbP$ the following families of densities (with respect to $\gmu$) on $\sX^{n}=(\sW\times \sY)^{n}$  
\[
\cbP=\{\gp_{\bst}:\gx=(x_{1},\ldots,x_{n})\mapsto q_{\bst}(x_{1})\ldots q_{\bst}(x_{n}),\; \bst\in\bsTT\}
\]
and by $\sbP$ the corresponding  $\rho$-model, i.e.\ the countable set $\{\gP=\gp_{\bst}\cdot\gmu,\;  \bst\in\bsTT\}$ with representation $(\gmu,\cbP)$.

Let us first prove 
\begin{prop}\label{prop-cle}
Under Assumption~\ref{Hypo}, the class of functions $\cP=\{q_{\bst}:(w,y)\mapsto q_{\bst(w)}(y),\; \bst\in\bsT\}$ on $\sX=\sW\times\sY$ is VC-subgraph with dimension not larger than $9.41V$.
\end{prop}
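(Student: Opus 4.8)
The plan is to deduce the result from the fact, recalled in the introduction, that the class
\[
\cG=\ac{g_{\bst}\colon(w,y)\mapsto S(y)\bst(w)-A(\bst(w)),\ \bst\in\bsT}
\]
is VC-subgraph whenever $\bsT$ is, and then to track the constant. Indeed $q_{\bst}=\exp\circ g_{\bst}$ and $\exp$ is increasing on $\R$, so composition with a monotone real function preserves the VC-subgraph property without increasing the dimension (van der Vaart and Wellner~\citeyearpar{MR1385671}, Lemma~2.6.18; Baraud \emph{et al.}~\citeyearpar{MR3595933}, Section~8). It therefore suffices to prove that $\cG$ is VC-subgraph with dimension at most $9.41V$.

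The key point is the geometry of the subgraphs of the $g_{\bst}$ inside $\sX\times\R=\sW\times\sY\times\R$. Since $A$ is convex, for each fixed $y$ the map $u\mapsto h_{y}(u)=S(y)u-A(u)$ is concave on $I$, so for each $t$ its superlevel set $\ac{u\in I\colon h_{y}(u)>t}$ is a subinterval of $I$, which we write $(a_{y,t},b_{y,t})$ with $a_{y,t},b_{y,t}\in[-\infty,+\infty]$ (empty and one-sided cases included, with the convention that $\bst(w)>-\infty$ and $\bst(w)<+\infty$ always hold). Hence the subgraph of $g_{\bst}$ satisfies
\[
\ac{(w,y,t)\colon t<g_{\bst}(w,y)}=\ac{(w,y,t)\colon \bst(w)>a_{y,t}}\cap\ac{(w,y,t)\colon \bst(w)<b_{y,t}}=:C_{1}(\bst)\cap C_{2}(\bst).
\]
Now $(w,y,t)\in C_{1}(\bst)$ iff $(w,a_{y,t})$ lies in the subgraph $\ac{(w,s)\colon s<\bst(w)}$ of $\bst$; since $\Psi_{1}\colon(w,y,t)\mapsto(w,a_{y,t})$ does not depend on $\bst$, the subgraph class of $\bsT$ has VC-dimension $\le V$ by Assumption~\ref{Hypo}, and preimages under a fixed map cannot cut more subsets out of a finite set, the class $\cC_{1}=\ac{C_{1}(\bst)\colon\bst\in\bsT}$ has VC-dimension $\le V$. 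Applying the same reasoning to the class $-\bsT$ — which is again VC-subgraph with dimension $\le V$, being the image of $\bsT$ under the decreasing map $x\mapsto-x$ — and to $\Psi_{2}\colon(w,y,t)\mapsto(w,b_{y,t})$, one gets that $\cC_{2}=\ac{C_{2}(\bst)\colon\bst\in\bsT}$ also has VC-dimension $\le V$.

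It remains to control the VC-dimension of the subgraph class $\ac{C_{1}(\bst)\cap C_{2}(\bst)\colon\bst\in\bsT}$ of $\cG$. This class is a subfamily of $\ac{C_{1}\cap C_{2}\colon C_{1}\in\cC_{1},\,C_{2}\in\cC_{2}}$, so if some finite set of cardinality $m$ is shattered by it then, applying Sauer's lemma to $\cC_{1}$ and to $\cC_{2}$ separately, one must have $2^{m}\le\pa{\sum_{i=0}^{V}\binom{m}{i}}^{2}\le(em/V)^{2V}$; a short computation shows this is impossible once $m\ge9.41V$. Hence the subgraphs of $\cG$ shatter no set of that size, $\cG$ is VC-subgraph with dimension at most $9.41V$, and the proposition follows from the reduction of the first paragraph.

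I expect the crux to be the second step: realising, from the concavity of $u\mapsto S(y)u-A(u)$, that every subgraph of $g_{\bst}$ is an intersection of only \emph{two} sets pulled back from the subgraph classes of $\bsT$ and $-\bsT$ — this is what makes the dimension grow only linearly in $V$ with a small constant. The remaining work (checking the degenerate shapes of the interval $(a_{y,t},b_{y,t})$ and the $\pm\infty$ conventions, invoking stability of VC classes under fixed maps, complements and monotone reparametrisation, and doing the Sauer estimate carefully enough to reach the explicit value $9.41$) is routine, but has to be carried out with some care to obtain a numerical constant rather than an unspecified one.
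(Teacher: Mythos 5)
Your proof is correct and follows essentially the same route as the paper's: reduce to the family $(w,y)\mapsto S(y)\bst(w)-A(\bst(w))$ via monotonicity of $\exp$, use the concavity of $\theta\mapsto S(y)\theta-A(\theta)$ to write each subgraph as an intersection $C^{+}_{\bst}\cap C^{-}_{\bst}$ of sets pulled back from the subgraph classes of $\bsT$ and $-\bsT$, and bound the VC-dimension of the resulting intersection class by $9.41V$. The only (cosmetic) difference is that the paper obtains the last step by citing Theorem~1.1 of van der Vaart and Wellner~\citeyearpar{MR2797943}, whereas you re-derive that bound directly from Sauer's lemma via $2^{m}\le (em/V)^{2V}$, which indeed fails for $m\ge 9.41V$; your computation checks out.
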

\begin{proof}
 The exponential function being monotone, it suffices to prove that the family 
\[
\sF=\ac{f:(w,y)\mapsto S(y)\bst(w)-A(\bst(w)),\ \bst\in \bsT}
\]
is VC-subgraph on $\sX=\sW\times\sY$ with dimension not larger than $9.41V$. The function $A$ being convex and continuous on $I$, the mapping defined  on $I$ by {$\theta\mapsto S(y)\theta-A(\theta)$} is continuous and concave for all fixed $y\in\sY$. In particular, for $u\in\R$ the level set {$\{\theta\in I,\; S(y)\theta-A(\theta) > u\}$} is an open subinterval of $I$ of the form $(\underline a(y,u),\overline a(y,u))$ where $\underline a(y,u)$ and $\overline a(y,u)$ belong to the closure $\overline I$ of $I$ in $\overline \R=\R\cup\{\pm\infty\}$.  For $\bst\in \bsT$, let us set
\begin{eqnarray*}
C_{\bst}^{+}&=&\{(w,b,b')\in\sW\times \overline I^{2},\ \bst(w)>b\}\\
C_{\bst}^{-}&=&\{(w,b,b')\in\sW\times \overline I^{2},\ \bst(w)<b'\}
\end{eqnarray*}
and define $\sC^{+}$ (respectively $\sC^{-}$)  as the class of all subsets $C_{\bst}^{+}$ (respectively $C_{\bst}^{-}$) when $\bst$ varies among $\bsT$. 

Let us prove that $\sC^{+}$ is a VC-class of sets  on $\sZ=\sW\times \overline I^{2}$ with dimension not larger than $V$. If $\sC^{+}$ shatters the finite subset $\{\Etc{z}{k}\}$ of $\sZ$ with $z_{i}=(w_{i},b_{i},b'_{i})$ for $i\in\{1,\ldots,k\}$, necessarily the $b_{i}$ belong to $\R$ for all $i\in\{1,\ldots,k\}$. 
Consequently, the class of subgraphs
\[
\widetilde \sC^{+}=\ac{\st\{(w,b)\in\sW\times \R,\ \bst(w)>b\},\ \bst\in\bsT}
\]
shatters the points $\widetilde z_{1}=(w_{1},b_{1}),\ldots,\widetilde z_{k}=(w_{k},b_{k})$ in $\sW\times \R$. This is possible only for $k\le V$ since, by Assumption~\ref{Hypo}, $\bsT$ is VC-subgraph on $\sW$ with dimension $V$. 

Arguing similarly we obtain that $\sC^{-}$ is also VC on $\sZ$ with dimension not larger than $V$. In particular, it follows from van der Vaart and Wellner~\citeyearpar{MR2797943} Theorem~1.1 that the class of subsets 
\[
\sC^{+}\bigwedge\sC^{-}=\{C^{+}\cap C^{-},\ \ C^{+}\in\sC^{+},\ C^{-}\in\sC^{-}\}
\]
is VC on $\sZ$ with dimension not larger than $9.41V$.

Let us now conclude the proof. If the class of subgraphs of $\sF$ shatter the points $(w_{1},y_{1},u_{1}),\ldots,(w_{k},y_{k},u_{k})$ in $\sW\times\sY\times\R$, this means that for all subsets $J$ of $\{1,\ldots,k\}$, there exists a function $\bst=\bst(J)\in\bsT$ such that the condition $j\in J$ is equivalent to the following ones
\[
{S(y_{j})\bst(w_{j})-A\pa{\bst(w_{j})} > u_{j}}\iff\bst(w_{j})\in (\underline a(y_{j},u_{j}),\overline a(y_{j},u_{j}))
\]
and finally equivalent to 
\[
z_{j}=(w_{j},\underline a(y_{j},u_{j}),\overline a(y_{j},u_{j}))\in C_{\bst}^{+}\cap C_{\bst}^{-}.
\]
Hence, the class 
\[
\sC=\ac{C_{\bst}^{+}\cap C_{\bst}^{-},\ \bst\in\bsT}\subset \sC^{+}\bigwedge\sC^{-}
\]
shatters $\{z_{1},\ldots,z_{k}\}$ in $\sZ$. This is possible for $k\le 9.41 V$ only and proves the fact that $\sF$ is VC-subgraph with dimension not larger than $9.41 V$. 
\end{proof}
The result below provides an upper bound on the $\rho$-dimension function $(\gP,\overline\gP)\mapsto D^{\sbP}(\gP,\overline\gP)$ of $\sbP$. The $\rho$-dimension is defined in Baraud and Birg\'e~\citeyearpar{BarBir2018}[Definition 4].
\begin{prop}\label{cor-constants}
Under Assumption~\ref{Hypo}, for all product probabilities $\gP,\overline \gP=\otimes_{i=1}^{n} \overline P_{i}$ on $(\sX^{n},\cX\on)$ with $\overline P_{i}=\overline p\cdot \mu_{i}$ for all $i\in\{1,\ldots,n\}$, 
\[
D^{\sbP}(\gP,\overline\gP)\le 10^{3}V\cro{9.11+\log_{+}\pa{\frac{n}{V}}}.
\]
\end{prop}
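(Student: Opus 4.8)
The plan is to unwind the definition of the $\rho$-dimension function of Baraud and Birg\'e~\citeyearpar{BarBir2018}[Definition~4] and to reduce the bound to an application of Theorem~\ref{thm-VCB}. Writing $\overline P_{i}=\overline p\cdot\mu_{i}$ for the common reference density, controlling $D^{\sbP}(\gP,\overline\gP)$ boils down to bounding, for Hellinger radii $y$ in the range that matters, the expected supremum
\[
\mathbf{w}(y)=\E\left[\sup\left|\sum_{i=1}^{n}\left(\psi\!\left(\sqrt{\tfrac{q_{\bst}(X_{i})}{\overline p(X_{i})}}\right)-\E\left[\psi\!\left(\sqrt{\tfrac{q_{\bst}(X_{i})}{\overline p(X_{i})}}\right)\right]\right)\right|\right],
\]
where $X_{i}\sim\overline P_{i}$ and the supremum runs over $\bst\in\bsTT$ with $\gh(\overline\gP,\gP_{\bst})\le y$ (the fixed term $\psi(\sqrt{\overline p/\overline p})=0$ drops out of the centred sums, and $q_{\bst}(X_{i})/\overline p(X_{i})$ is, for every $i$, the same function of $X_{i}$ because $P_{i,\bst}$ has density $q_{\bst}$ with respect to $\mu_{i}$). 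Since $\bsTT$ is finite or countable this is well defined, and the claim will follow as soon as we establish an estimate $\mathbf{w}(y)\le C_{1}\,y\sqrt{V\sL_{*}}+C_{2}\,V\sL_{*}$ with $\sL_{*}=9.11+\log_{+}(n/V)$ and $C_{1}^{2}\vee C_{2}\le 10^{3}$.

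First I would identify the function class $\sF_{\overline p}=\{(w,y)\mapsto\psi(\sqrt{q_{\bst(w)}(y)/\overline p(w,y)}):\bst\in\bsTT\}$ on $\sX=\sW\times\sY$ (with the conventions $0/0=1$, $a/0=+\infty$, $\psi(+\infty)=1$). By Proposition~\ref{prop-cle} the class $\cP=\{q_{\bst}:\bst\in\bsT\}$ is VC-subgraph on $\sX$ with dimension at most $9.41V$; since $\psi$ and $\sqrt{\cdot}$ are monotone and multiplication by the fixed nonnegative function $1/\overline p$ preserves the VC-subgraph property (the subgraph of $f/\overline p$ is the image of that of $f$ under a fibrewise affine bijection of $\sX\times\R$), $\sF_{\overline p}$ is again VC-subgraph with dimension at most $9.41V$, and its elements take values in $[-1,1]$ because $\psi$ does. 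For the variance proxy I would use $\psi(\sqrt{u/v})=(\sqrt u-\sqrt v)/(\sqrt u+\sqrt v)$: with $P_{i,\bst}=p_{i,\bst}\cdot\mu_{i}$,
\[
\E_{\overline P_{i}}\!\left[\psi^{2}\!\left(\sqrt{\tfrac{q_{\bst}(X_{i})}{\overline p(X_{i})}}\right)\right]=\int\frac{(\sqrt{p_{i,\bst}}-\sqrt{\overline p})^{2}}{(\sqrt{p_{i,\bst}}+\sqrt{\overline p})^{2}}\,\overline p\,d\mu_{i}\le\int(\sqrt{p_{i,\bst}}-\sqrt{\overline p})^{2}\,d\mu_{i}=2h^{2}(\overline P_{i},P_{i,\bst}),
\]
so that $\frac1n\sum_{i}\E_{\overline P_{i}}[\psi^{2}(\cdots)]\le\frac2n\gh^{2}(\overline\gP,\gP_{\bst})\le \frac{2y^{2}}{n}=:\sigma_{y}^{2}$ on the ball of radius $y$ (capped at $1$). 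Theorem~\ref{thm-VCB}, applied with $V$ replaced by $9.41V$ and $\sigma=\sigma_{y}$, then gives $\mathbf{w}(y)\le 4.74\sqrt{2\cdot 9.41}\;y\sqrt{V\,\sL(\sigma_{y})}+90\cdot 9.41\;V\,\sL(\sigma_{y})$.

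It remains to replace $\sL(\sigma_{y})=9.11+\log(n/(2y^{2}))$ by $\sL_{*}$, and this is where the range of radii is used: in the definition of the $\rho$-dimension only radii with $y^{2}\gtrsim V$ are relevant (for smaller radii $D^{\sbP}$ is controlled by its base value, of order $V$, which is negligible against $10^{3}V\sL_{*}$), and for such $y$ one has $\log(n/(2y^{2}))\le\log_{+}(n/(2V))\le\log_{+}(n/V)$, hence $\sL(\sigma_{y})\le\sL_{*}$. Therefore $\mathbf{w}(y)\le 4.74\sqrt{18.82}\;y\sqrt{V\sL_{*}}+846.9\,V\sL_{*}$, and since $(4.74)^{2}\cdot 18.82<10^{3}$ and $846.9<10^{3}$, the bound $D^{\sbP}(\gP,\overline\gP)\le 10^{3}V[9.11+\log_{+}(n/V)]$ follows from the definition of the $\rho$-dimension. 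I expect the main obstacle to be the bookkeeping at the two ends of this argument: on one side, extracting from Baraud and Birg\'e~\citeyearpar{BarBir2018} the precise form of the expected supremum and the exact range of radii that the $\rho$-dimension requires one to control (including how the reference $\overline\gP$ and the point $\gP$ enter the ball and the variance proxy); on the other, checking that all the numerical constants accumulated along the way --- the symmetrisation factors, the VC blow-up constant $9.41$ from Proposition~\ref{prop-cle}, the variance constant $2$, and the constants $4.74$ and $90$ of Theorem~\ref{thm-VCB} --- indeed fit under $10^{3}$. By comparison, the VC-subgraph stability step and the variance estimate are routine.
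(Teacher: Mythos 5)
Your overall strategy is exactly the paper's: show that the class $\{\psi(\sqrt{q_{\bst}/\overline p}),\ \bst\in\bsTT\}$ is VC-subgraph with dimension at most $9.41V$ (Proposition~\ref{prop-cle} plus stability under pointwise monotone transformations --- the paper cites Proposition~42 of Baraud \emph{et al.}~\citeyearpar{MR3595933} where you give the fibrewise-bijection argument), obtain a variance proxy of order $y^{2}/n$ on Hellinger balls of radius $y$, apply Theorem~\ref{thm-VCB}, and use the fact that only radii with $y^{2}\gtrsim V$ matter to replace $\log(1/\sigma^{2})$ by $\log_{+}(n/V)$ before plugging into Definition~4 of Baraud and Birg\'e~\citeyearpar{BarBir2018}. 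The final bookkeeping you defer does work out: with $a_{1}=3/8$, $a_{2}^{2}=3\sqrt2$, $\beta=a_{1}/(4a_{2})$ and $D=10^{3}VL$, the requirement $w^{\sbP}(\gP,\overline\gP,y)\le a_{1}y^{2}/8$ for $y\ge\beta^{-1}\sqrt D$ reduces to $29.1\sqrt{VL/D}+37.5\,VL/D\le 1$, which holds.

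The one step that fails as written --- and which you half-anticipated in your closing caveat --- is the variance proxy. In $w^{\sbP}(\gP,\overline\gP,y)$ the empirical process is driven by $X_{i}\sim P_{i}$ for the \emph{arbitrary} true distribution $\gP=\otimes_{i}P_{i}$, not by $\overline P_{i}$; correspondingly the ball defining $\sF_{y}$ is $\gh^{2}(\gP,\overline\gP)+\gh^{2}(\gP,\gP_{\bst})<y^{2}$, centred at $\gP$, not at $\overline\gP$. Your computation
$\E_{\overline P_{i}}[\psi^{2}(\sqrt{q_{\bst}/\overline p})]\le 2h^{2}(\overline P_{i},P_{i,\bst})$
is correct but only covers the special case $\gP=\overline\gP$; for general $\gP$ one must bound $\int \frac{(\sqrt{p_{i,\bst}}-\sqrt{\overline p})^{2}}{(\sqrt{p_{i,\bst}}+\sqrt{\overline p})^{2}}\,p_{i}\,d\mu_{i}$, where the ratio $p_{i}/(\sqrt{p_{i,\bst}}+\sqrt{\overline p})^{2}$ is no longer bounded by $1$. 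This is precisely inequality (11) of Assumption~2 in Baraud--Birg\'e (verified for this $\psi$ in their Proposition~3), which gives $\E_{P_{i}}[\psi^{2}]\le a_{2}^{2}\,[h^{2}(P_{i},\overline P_{i})+h^{2}(P_{i},P_{i,\bst})]$ with $a_{2}^{2}=3\sqrt2$ rather than your constant $2$. Substituting $a_{2}^{2}$ for $2$ in your chain still leaves all constants under $10^{3}$, so the gap is repairable by citation, but the elementary Hellinger bound you propose cannot replace it.
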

\begin{proof}
Given two product probabilities $\gR=\otimes_{i=1}^{n}R_{i}$ and $\gR'=\otimes_{i=1}^{n}R_{i}'$ on $(\sX^{n},\cX\on)$, we set $\gh^{2}(\gR,\gR')=\sum_{i=1}^{n}h^{2}(R_{i},R_{i}')$ and for $y>0$, 
\[
\sF_{y}=\ac{\left.\psi\pa{\sqrt{\frac{q_{\bst}}{\overline p}}}\right|\; \bst\in\bsTT, \gh^{2}(\gP,\overline \gP)+\gh^{2}(\gP,\gp_{\theta}\cdot\gmu)< y^{2}}.
\]
It follows from Proposition~\ref{prop-cle} and Baraud {\em et al.}~\citeyearpar{MR3595933}[Proposition~42] that 
$\sF_{y}$ is VC-subgraph with dimension not larger than $\overline V=9.41V$. Besides, by Proposition~3 in Baraud and Birg\'e~\citeyearpar{BarBir2018} we know that our function $\psi$ satisfies their Assumption~2 and more precisely (11) which, together with the definition of $\sF_{y}$, implies that $\sup_{f\in\sF_{y}}n^{-1}\sum_{i=1}^{n}\E\cro{f^{2}(X_{i})}\le \sigma^{2}(y)=(a_{2}^{2}y^{2}/n)\wedge 1$. Applying Theorem~\ref{thm-VCB} with $\sF=\sF_{y}$, we obtain that 
\begin{align*}
w^{\sbP}(\gP,\overline\gP,y)&=\E\cro{\sup_{f\in\cF_{y}}\ab{\sum_{i=1}^{n}f(X_{i})-\E\cro{f(X_{i})}}}\\
&\le 4.74a_{2}y\sqrt{\overline V\sL(\sigma(y))}+90\overline V\sL(\sigma(y))\\
&=14.55a_{2}y\sqrt{V\sL(\sigma(y))}+846.9V\sL(\sigma(y)).
\end{align*}
Let $D\ge a_{1}^{2}V/(16a_{2}^{4})=2^{-11}V$ to be chosen later on and $\beta=a_{1}/(4a_{2})$. For $y\ge \beta^{-1}\sqrt{D}$,
\begin{align*}
\sL(\sigma(y))&=9.11 +\log_{+}\pa{\frac{n}{a_{2}^{2}y^{2}}}\le 9.11+\log_{+}\pa{\frac{n}{a_{2}^{2}\beta^{-2}D}}\\
&= 9.11+\log_{+}\pa{\frac{a_{1}^{2}n}{16a_{2}^{4}D}}\le 9.11+\log_{+}\pa{\frac{n}{V}}=L.
\end{align*}
Hence for all $y\ge \beta^{-1}\sqrt{D}$, 
\begin{align*}
w^{\sbP}(\gP,\overline\gP,y)&\le 14.55a_{2}y\sqrt{V L}+846.9 V L\\
&=\frac{a_{1}y^{2}}{8}\cro{\frac{8\times 14.55a_{2}\sqrt{V L}}{a_{1}y}+\frac{8\times 846.9 VL}{a_{1}y^{2}}}\\
&\le \frac{a_{1}y^{2}}{8}\cro{\frac{8\times 14.55a_{2}\sqrt{V L}}{a_{1}\beta^{-1}\sqrt{D}}+\frac{8\times 846.9 VL}{a_{1}\beta^{-2}D}}\\
&=\frac{a_{1}y^{2}}{8}\cro{\frac{2\times 14.55\sqrt{V L}}{\sqrt{D}}+\frac{8\times 846.9 a_{1}VL}{16 a_{2}^{2}D}}\\
&=\frac{a_{1}y^{2}}{8}\cro{\frac{29.1\sqrt{V L}}{\sqrt{D}}+\frac{37.5VL}{D}}\le\frac{a_{1}y^{2}}{8}
\end{align*}
for $D=10^{3}VL>2^{-11}V$. The result follows from the definition of the $\rho$-dimension given in Baraud and Birg\'e~\citeyearpar{BarBir2018}[Definition 4]. 
\end{proof}

Let us now {complete} the proof of Theorem~\ref{thm-1}. It follows from Baraud and Birg\'e~\citeyearpar{BarBir2018}[Theorem~1], that the $\rho$-estimator $\widehat \gP=\gP_{\widehat \bst}$ built on the $\rho$-model $\sbP$, which coincides with that described in  Section~\ref{Sect-def-est}, satisfies for all $\overline \gP\in\sbP$, with a probability at least $1-e^{-\xi}$, 
\begin{align*}
\gh^{2}(\gP\et,\widehat\gP)\le \gamma\gh^{2}\pa{\gP\et,\sbP}+\gamma'\pa{\frac{D^{\sbP}(\gP\et,\overline \gP)}{4.7}+1.49+\xi}
\end{align*}
with 
\[
\gamma=\frac{4(a_{0}+8)}{a_{1}}+2+\frac{84}{a_{2}^{2}}<150
\quad \text{and}\quad \gamma'= \frac{4}{a_{1}}\pa{\frac{35a_{2}^{2}}{a_{1}}+74}<5014
\]
and $D^{\sbP}(\gP\et,\overline \gP)\le 10^{3}V\cro{9.11+\log_{+}(n/V)}$ by Proposition~\ref{cor-constants}.
Finally, the result follows from the facts that $\gh^{2}(\gP\et,\widehat\gP)=\gh^{2}(\gQ\et,\gQ_{\widehat \bst})$ and $\gh^{2}\pa{\gP\et,\sbP}=\gh^{2}\pa{\gQ\et,\sbQ}$.

\subsection{A preliminary result}\label{sect-PRes}
\begin{prop}\label{propert}
Let $g$ be a $1$-Lipschitz function on $\R$ supported on $[0,1]$, $N$ some positive integer and $L$ some positive number. For $\eps\in \{-1,1\}^{2^{N}}$ define the function $G_{\eps}$ as 
\begin{equation}\label{def-G}
G_{\eps}(x)=L\sum_{k=0}^{2^{N}-1}\eps_{k+1}g\pa{2^{N}x-k}\quad \text{for all $x\in [0,1]$.}
\end{equation}
Then, $G_{\eps}$ satisfies~\eref{def-Hold} with $\alpha\in (0,1]$ and $M>0$ provided that  $L\le 2^{-[(N-1)\alpha+1]}M$.
\end{prop}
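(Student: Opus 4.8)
The plan is to exploit the fact that the $2^{N}$ rescaled bumps $g(2^{N}\cdot-k)$, $0\le k\le 2^{N}-1$, composing $G_{\eps}$ have supports $I_{k}=[k2^{-N},(k+1)2^{-N}]$ with pairwise disjoint interiors, each of them vanishing at the two endpoints of $I_{k}$. Indeed, $g$ is continuous (being $1$-Lipschitz) and, being supported in $[0,1]$, satisfies $g(0)=g(1)=0$ and $\norm{g}_{\infty}\le 1$. For $x\in I_{k}$ we have $2^{N}x-k\in[0,1]$ while $2^{N}x-k'\notin(0,1)$ for every $k'\ne k$, so $G_{\eps}$ coincides on $I_{k}$ with $x\mapsto L\eps_{k+1}g(2^{N}x-k)$; in particular $G_{\eps}$ is continuous on $[0,1]$ and vanishes at every dyadic point $j2^{-N}$, $0\le j\le 2^{N}$.

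I would then establish two elementary estimates. First, for any $x\in[0,1]$ at most one of the bumps $g(2^{N}\cdot-k)$ fails to vanish at $x$, so $\ab{G_{\eps}(x)}\le L\norm{g}_{\infty}\le L$ and hence $\ab{G_{\eps}(x)-G_{\eps}(y)}\le 2L$ for all $x,y\in[0,1]$. Second, on each $I_{k}$ the function $G_{\eps}$ is $L\eps_{k+1}$ times the $2^{N}$-Lipschitz map $g(2^{N}\cdot-k)$, hence $L2^{N}$-Lipschitz there; since $G_{\eps}$ is continuous on $[0,1]$ and vanishes at all the $j2^{-N}$, chaining these local bounds along the dyadic points lying between $x$ and $y$ yields $\ab{G_{\eps}(x)-G_{\eps}(y)}\le L2^{N}\ab{x-y}$ for all $x,y\in[0,1]$. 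Altogether $\ab{G_{\eps}(x)-G_{\eps}(y)}\le\min\ac{L2^{N}\ab{x-y},\,2L}$.

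It then remains to interpolate the two bounds against $\ab{x-y}^{\alpha}$, the cross-over occurring at $\ab{x-y}=2^{1-N}$. When $\ab{x-y}\le 2^{1-N}$, using $1-\alpha\ge 0$,
\[
\ab{G_{\eps}(x)-G_{\eps}(y)}\le L2^{N}\ab{x-y}=L2^{N}\ab{x-y}^{1-\alpha}\ab{x-y}^{\alpha}\le L2^{N}\pa{2^{1-N}}^{1-\alpha}\ab{x-y}^{\alpha}=L\,2^{(N-1)\alpha+1}\ab{x-y}^{\alpha},
\]
while when $\ab{x-y}\ge 2^{1-N}$,
\[
\ab{G_{\eps}(x)-G_{\eps}(y)}\le 2L=2L\ab{x-y}^{-\alpha}\ab{x-y}^{\alpha}\le 2L\pa{2^{1-N}}^{-\alpha}\ab{x-y}^{\alpha}=L\,2^{(N-1)\alpha+1}\ab{x-y}^{\alpha}.
\]
In both regimes $\ab{G_{\eps}(x)-G_{\eps}(y)}\le L\,2^{(N-1)\alpha+1}\ab{x-y}^{\alpha}\le M\ab{x-y}^{\alpha}$ by the hypothesis $L\le 2^{-[(N-1)\alpha+1]}M$, which is exactly~\eref{def-Hold}.

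The only step that requires a little care is the second estimate, namely the passage from the per-interval Lipschitz bound to a global one: it relies on $G_{\eps}$ being continuous (indeed equal to $0$) at the dyadic break-points $j2^{-N}$, so that inserting these points into a chain $x=t_{0}<t_{1}<\cdots<t_{m}=y$ reduces the global bound to the per-interval ones by the triangle inequality. The structural observation that the bump supports overlap only at zeros of $g$ is what makes this work; the two a priori bounds and the final interpolation are routine.
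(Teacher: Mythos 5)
Your proof is correct. The underlying mechanism is the same as the paper's (the rescaled bumps live on the dyadic intervals $I_{k}$, vanish at the break points, are $2^{N}$-Lipschitz there, and one interpolates with exponent $\alpha$ at the scale $2^{-N}$), but the argument is organized differently. The paper splits into two cases according to whether $x$ and $y$ lie in the same $I_{k}$; in the second case it bounds $\ab{G_{\eps}(y)}+\ab{G_{\eps}(x)}$ by $L2^{N}$ times the sum of the distances from $y$ and $x$ to the nearest interior dyadic points, and then uses that this sum is at most $2^{-N+1}\wedge(y-x)$ to perform the interpolation locally. You instead extract two global estimates --- the Lipschitz constant $L2^{N}$ (obtained by chaining through the dyadic zeros, which is the one step needing the continuity/vanishing observation you flag) and the oscillation bound $2L$ --- and interpolate between them at the cross-over $\ab{x-y}=2^{1-N}$. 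Both routes produce exactly the constant $L\,2^{(N-1)\alpha+1}$, so the hypothesis $L\le 2^{-[(N-1)\alpha+1]}M$ closes the argument identically; your version is slightly more modular, the paper's avoids stating a global Lipschitz bound. All your intermediate claims (that $g(0)=g(1)=0$ and $\norm{g}_{\infty}\le 1$ follow from $1$-Lipschitzness and the support condition, that at most one bump is nonzero at any $x$, and the two displayed interpolation computations) check out.
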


\begin{proof}
For $k\in \Lambda=\{0,\ldots,2^{N}-1\}$, we set $g_{k}:x\mapsto g(2^{N}x-k)$. Since $g$ is 1-Lipschitz and supported on $[0,1]$,  the function $g_{k}$ is $2^{N}$-Lipschitz on $\R$ and supported on $I_{k}=[2^{-N}k,2^{-N}(k+1)]\subset [0,1]$ for all $k\in \Lambda$. In particular, the intersection of the supports of $g_{k}$ and $g_{k'}$ reduces to at most a singleton when $k\ne k'$. 

Let $x<y$ be two points in $[0,1]$. If there exists $k\in \Lambda$ such that $x,y\in I_{k}$, using that $0\le y-x\le 2^{-N}$ and the fact that $L2^{N\alpha}\le L2^{(N-1)\alpha+1}\le  M$, we obtain that
\begin{align*}
\ab{G_{\eps}(y)-G_{\eps}(x)}&=L\ab{g_{k}(y)-g_{k}(x)}\le L2^{N}(y-x)\\
&\le L2^{N}(y-x)^{1-\alpha}(y-x)^{\alpha}\le L2^{N\alpha}(y-x)^{\alpha}\le M(y-x)^{\alpha}.
\end{align*}
%
If $x\in I_{k}$ and $y\in I_{k'}$ with $k'\ge k+1$, 
\[
\pa{y-2^{-N}k'}+\pa{2^{-N}(k+1)-x}\le 2^{-N+1}\wedge (y-x)
\]
and since $g$ vanishes at 0 and 1,
\begin{align*}
\ab{G_{\eps}(y)-G_{\eps}(x)}&=L\ab{\eps_{k'+1}g_{k'}(y)-\eps_{k+1}g_{k}(x)}\le L\ab{g_{k'}(y)}+L\ab{g_{k}(x)}\\
&=L\ab{g_{k'}(y)-g_{k'}(2^{-N}k')}+L\ab{g_{k}(2^{-N}(k+1))-g_{k}(x)}\\
&\le L2^{N}\cro{y-2^{-N}k'+2^{-N}(k+1)-x}^{1-\alpha+\alpha}\\
&\le  L2^{N} 2^{(1-\alpha)(-N+1)}(y-x)^{\alpha}=L2^{(N-1)\alpha+1}(y-x)^{\alpha}
\end{align*}
and the conclusion follows from the fact that $L\le 2^{-[(N-1)\alpha+1]}M$. 
\end{proof}

We use the following version of Assouad's lemma.
\begin{lem}[Assouad's Lemma]\label{assouad}
Let $\cP$ be a family of probabilities on a measurable space $(\sX,\cX)$. Assume that for some integer $d\ge 1$, $\cP$ contains a subset of the form $\cC=\{P_{\eps},\; \eps\in \{-1,1\}^{d}\}$  with the following properties:
\begin{listi}
\item there exists $\eta>0$ such that for all $\eps,\eps'\in\{-1,1\}^{d}$
\[
h^{2}\pa{P_{\eps},P_{\eps'}}\ge \eta \delta(\eps,\eps')\quad \text{with}\quad \delta(\eps,\eps')=\sum_{j=1}^{d}\1_{\eps_{j}\ne \eps_{j}'}
\]
\item there exists a constant $a\in [0,1/2]$ such that 
\[
h^{2}\pa{P_{\eps},{P_{\eps'}}}\le \frac{a}{n}\quad \text{for all $\eps,\eps'\in \{-1,1\}^{d}$ satisfying $\delta(\eps,\eps')=1$.}
\]
\end{listi}
Then for all measurable mappings $\widehat P:\sX^{n}\to \cP$,
%
\begin{equation}\label{concAssouad}
\sup_{P\in \cP}\E_{\gP}\cro{h^{2}(P,\widehat P(\bsX))}\ge \frac{d\eta}{8}\max\ac{1-\sqrt{2a},(1-a/n)^{2n}},
\end{equation}
where $\E_{\gP}$ denotes the expectation with respect to a random variable $\bsX=(\etc{X})$ with distribution $\gP=P\on$.
\end{lem}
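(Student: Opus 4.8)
The plan is to run the classical Assouad argument, with a decoding step that accommodates estimators merely valued in $\cP$ rather than in the hypercube $\cC$. First I would bound the maximal risk below by the Bayes risk for the uniform prior on $\cC$,
\[
\sup_{P\in\cP}\E_{\gP}\cro{h^{2}(P,\widehat P(\bsX))}\ge \frac{1}{2^{d}}\sum_{\eps\in\{-1,1\}^{d}}\E_{P_{\eps}^{\otimes n}}\cro{h^{2}(P_{\eps},\widehat P(\bsX))},
\]
and introduce the decoder $\widehat\eps=\widehat\eps(\bsX)\in\{-1,1\}^{d}$ defined as any minimiser of $\eps'\mapsto h(\widehat P(\bsX),P_{\eps'})$ over the hypercube (well defined since $\cC\subset\cP$). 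The triangle inequality for $h$ and the minimality of $\widehat\eps$ give $h(P_{\eps},P_{\widehat\eps})\le h(P_{\eps},\widehat P)+h(\widehat P,P_{\widehat\eps})\le 2h(P_{\eps},\widehat P)$, so by hypothesis~(i),
\[
h^{2}(P_{\eps},\widehat P)\ge \tfrac14 h^{2}(P_{\eps},P_{\widehat\eps})\ge \tfrac{\eta}{4}\,\delta(\eps,\widehat\eps).
\]

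Next I would decompose the Hamming distance coordinatewise, $\delta(\eps,\widehat\eps)=\sum_{j=1}^{d}\1_{\widehat\eps_{j}\ne\eps_{j}}$, and average the two displays above over $\eps$, obtaining
\[
\sup_{P\in\cP}\E_{\gP}\cro{h^{2}(P,\widehat P)}\ge \frac{\eta}{4}\sum_{j=1}^{d}\frac{1}{2^{d}}\sum_{\eps}\P_{P_{\eps}^{\otimes n}}\pa{\widehat\eps_{j}\ne\eps_{j}}.
\]
For a fixed $j$ I would group the $2^{d}$ vertices into the $2^{d-1}$ antipodal pairs $\{\eps^{-},\eps^{+}\}$ differing only at coordinate~$j$; since $\widehat\eps_{j}$ is a $\{-1,1\}$-valued test, $\P_{P_{\eps^{-}}^{\otimes n}}(\widehat\eps_{j}=+1)+\P_{P_{\eps^{+}}^{\otimes n}}(\widehat\eps_{j}=-1)\ge 1-D(P_{\eps^{-}}^{\otimes n},P_{\eps^{+}}^{\otimes n})$, where $D$ is the total variation distance, and by hypothesis~(ii) the right-hand side is the same for every such pair. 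Hence each inner average is at least $\tfrac12(1-D(P_{\eps^{-}}^{\otimes n},P_{\eps^{+}}^{\otimes n}))$, whence
\[
\sup_{P\in\cP}\E_{\gP}\cro{h^{2}(P,\widehat P)}\ge \frac{d\eta}{8}\pa{1-D(P_{\eps^{-}}^{\otimes n},P_{\eps^{+}}^{\otimes n})}
\]
for any single-flip pair $\{\eps^{-},\eps^{+}\}$.

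It remains to bound $1-D(P_{\eps^{-}}^{\otimes n},P_{\eps^{+}}^{\otimes n})$ from below, in two ways. Multiplicativity of the Hellinger affinity gives $h^{2}(P_{\eps^{-}}^{\otimes n},P_{\eps^{+}}^{\otimes n})=1-(1-h^{2}(P_{\eps^{-}},P_{\eps^{+}}))^{n}\le 1-(1-a/n)^{n}\le a\le\tfrac12$, using hypothesis~(ii) and Bernoulli's inequality. Combined with $D\le h\sqrt{2-h^{2}}\le\sqrt{2}\,h$ this yields $1-D\ge 1-\sqrt{2a}$; combined with the Cauchy--Schwarz bound $1-D=\int(p\wedge q)\,d\mu\ge\tfrac12(\int\sqrt{pq}\,d\mu)^{2}$ applied to the product densities, together with the one-observation identity $\int\sqrt{pq}\,d\mu=1-h^{2}(P_{\eps^{-}},P_{\eps^{+}})\ge 1-a/n$, it yields $1-D\ge\tfrac12(1-a/n)^{2n}$. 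Taking the larger of the two and inserting it into the previous display produces the announced bound, up to the routine constant accounting. I expect the one genuinely delicate point to be precisely this last passage: turning the \emph{local} hypothesis~(ii), which controls only the distance between one-observation marginals, into a lower bound on the Bayes testing error between the $n$-fold product measures via tensorization of the affinity; the loss of a factor $\tfrac14$ in the decoding and of a factor $\tfrac12$ per coordinate are intrinsic to this scheme and account for the constant $d\eta/8$.
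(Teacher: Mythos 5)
Your decoding step is exactly the paper's: you project $\widehat P$ onto the nearest vertex of the hypercube and use the triangle inequality for $h$ to get $h^{2}(P_{\eps},\widehat P)\ge \tfrac{\eta}{4}\,\delta(\eps,\widehat\eps)$, which is the inequality $h^{2}(P_{\eps},P_{\overline\eps})\le 4h^{2}(P,P_{\eps})$ in the paper's proof. Where you diverge is that the paper then stops and invokes the version of Assouad's lemma in Birg\'e (1986) with $\alpha=\eta/4$ and $\beta_{i}=a/n$, whereas you carry out the remaining Bayes-risk/antipodal-pairing/testing argument from scratch. That self-contained route is legitimate and arguably more informative, and your first branch comes out with the correct constant: $\tfrac{\eta}{4}\cdot\tfrac{d}{2}\cdot(1-D)\ge \tfrac{d\eta}{8}(1-\sqrt{2a})$ via $D\le \sqrt{2}\,h(P_{\eps^{-}}^{\otimes n},P_{\eps^{+}}^{\otimes n})$ and tensorization of the affinity. (A small wording slip: hypothesis (ii) does not make $D(P_{\eps^{-}}^{\otimes n},P_{\eps^{+}}^{\otimes n})$ \emph{equal} across single-flip pairs, only uniformly bounded; but since you only use a uniform lower bound on $1-D$, this is harmless.)

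The genuine gap is in the second branch, and it is not ``routine constant accounting.'' Your chain gives, per coordinate, $\tfrac12\pa{1-D}\ge \tfrac12\cdot\tfrac12\pa{1-a/n}^{2n}$, hence a final bound of $\tfrac{d\eta}{16}\pa{1-a/n}^{2n}$ rather than the stated $\tfrac{d\eta}{8}\pa{1-a/n}^{2n}$. The factor $\tfrac12$ in Le Cam's inequality $\int p\wedge q\,d\mu\ge\tfrac12\pa{\int\sqrt{pq}\,d\mu}^{2}$ cannot be discarded: for $P=\mathrm{Bern}(0.4)$ and $Q=\mathrm{Bern}(0.6)$ one has $\int p\wedge q=0.8$ while $\pa{\int\sqrt{pq}}^{2}=0.96$, so the bound $1-D\ge\rho^{2}$ is simply false, and within your scheme (nearest-vertex decoding, which already caps the per-pair loss at $\eta/4$ instead of the $\eta/2$ that the direct inequality $h^{2}(P_{\eps^{-}},R)+h^{2}(P_{\eps^{+}},R)\ge\tfrac12 h^{2}(P_{\eps^{-}},P_{\eps^{+}})$ would give) I do not see how to recover the missing factor $2$. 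So as written you prove $\tfrac{d\eta}{8}\max\{1-\sqrt{2a},\tfrac12(1-a/n)^{2n}\}$, a slightly weaker statement; the stated constant is exactly what the citation to Birg\'e is doing for the authors. In mitigation, the paper only ever uses the $1-\sqrt{2a}$ branch downstream (Propositions~\ref{prop-bi} and~\ref{prop-BI0} take $a\le 1/8$ so that $1-\sqrt{2a}\ge 1/2$), so your weaker form would suffice for every application in the paper; but if you want the lemma as stated you must either sharpen the per-coordinate argument or, like the authors, defer to Birg\'e's formulation.
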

\begin{proof}
Given a probability $P$ on $(\sX,\cX)$, let $\overline \eps$ be a minimizer over $\{-1,1\}^{d}$ of the mapping $\eps\mapsto h^{2}(P,P_{\eps})$. By definition of $\overline \eps$, for all $\eps\in\{-1,1\}^{d}$
\[
h^{2}(P_{\eps},P_{\overline \eps})\le 2\pa{h^{2}(P,P_{\eps})+h^{2}(P,P_{\overline \eps})}\le 4h^{2}(P,P_{\eps}).
\]
 Hence by~(i), for all $\eps\in\{-1,1\}^{d}$
\[
h^{2}(P_{\eps},P)\ge \frac{\eta}{4}\delta(\eps,\overline \eps)=\sum_{i=1}^{d}\cro{\frac{1+\eps_{i}}{2}\ell_{i}(P)+\frac{1-\eps_{i}}{2}\ell_{i}'(P)}
\]
with $\ell_{i}(P)=(\eta/4)\1_{\overline \eps_{i}=-1}$ and $\ell_{i}'(P)=(\eta/4)\1_{\overline \eps_{i}=+1}$ for  $i\in\{1,\ldots,d\}$. The result follows by applying the version of Assouad's lemma that can be found in Birg\'e~\citeyearpar{MR816706} with $\beta_{i}=a/n$ for all $i\in\{1,\ldots,d\}$, $\alpha=\eta/4$ and the change of notation from $\eps\in\{-1,1\}$ to $\eps\in\{0,1\}$.
\end{proof}

\subsection{Proof of Proposition~\ref{prpL2-h}}
Since the statistical model $\overline \sQ=\{R_{\gamma}=Q_{v^{-1}(\gamma)},\; \gamma\in J\}$ is regular with constant Fisher information equal to 8, by applying Theorem~7.6 page 81 in Ibragimov and Has{'}minski{\u\i}~\citeyearpar{MR620321} we obtain that 
\[
h^{2}\pa{R_{\gamma},R_{\gamma'}}\le \pa{\gamma'-\gamma}^{2}\quad \text{for all $\gamma,\gamma'\in J$}
\]
and for any compact subset $K$ of $J$, there exists a constant $c_{K}>0$
\[
h^{2}\pa{R_{\gamma},R_{\gamma'}}\ge c_{K}^{2}\pa{\gamma'-\gamma}^{2}\quad \text{for all $\gamma,\gamma'\in K$}.
\]
The result follows by substituting $\bsg$ and $\bsg'$ to $\gamma$ and $\gamma'$ respectively and then integrating with respect to $P_{W}$.

\subsection{Proof of Proposition~\ref{prop-bs}}
For $\bsg\in\cH_{\alpha}(M)$ and $j\in\{1,\ldots,D\}$, let $\gamma_{j}=D\int_{I_{j}}\bsg(w)dw$ and $\overline \bsg=\sum_{j=1}^{D}\gamma_{j}\1_{I_{j}}$. Since $\bsg$ takes its values in $J$, $\gamma_{j}\in J$ for all $j\in\{1,\ldots,D\}$ and $\overline \bsg=\sum_{j=1}^{D}\gamma_{j}\1_{I_{j}}\in\overline \bsS$. Since for all $w\in I_{j}$, $|\bsg(w)-\overline \bsg(w)|\le \sup_{|w-w'|\le 1/D}|\bsg(w)-\bsg(w')|\le MD^{-\alpha}$ and $\bsS$ is dense in $\overline \bsS$ with respect to the supremum norm
\begin{align*}
\sup_{\bsg\in\cH_{\alpha}(M)}\inf_{\overline \bsg\in \bsS}\norm{\bsg-\overline\bsg}_{2}&\le \sup_{\bsg\in\cH_{\alpha}(M)}\inf_{\overline \bsg\in \bsS}\norm{\bsg-\overline\bsg}_{\infty}\\
&=\sup_{\bsg\in\cH_{\alpha}(M)}\inf_{\overline \bsg\in \overline \bsS}\norm{\bsg-\overline\bsg}_{\infty}\le MD^{-\alpha}.
\end{align*}

Using~\eref{eq-bsph} and the fact that the data $\etc{X}$ are i.i.d., we deduce that for all functions $\bsg$ and $\bsg'$ with values in $J$, 
\[
\gh^{2}(\gR_{\bsg},\gR_{\bsg'})=nh^{2}(R_{\bsg},R_{\bsg'})\le n{\kappa^{2}}\norm{\bsg-\bsg'}_{2}^{2}\le n{\kappa^{2}}\norm{\bsg-\bsg'}_{\infty}^{2}
\]
and by applying Corollary~\ref{cor-general} with $V=D+1$ we {derive} that 
\begin{align*}
\lefteqn{\sup_{{\bsg\et}\in \cH_{\alpha}(M)}\E\cro{h^{2}(R_{\bsg\et},R_{\widehat \bsg})}}
\hspace{20mm}\\&\le C'\cro{\adjustlimits \sup_{{\bsg\et}\in \cH_{\alpha}(M)}\inf_{\overline \bsg\in \bsS}h^{2}(R_{\bsg\et},R_{\overline \bsg})+\frac{V}{n}\cro{1+\log_{+}(n/V)}}\\
&\le C'\cro{ \kappa^{2}\adjustlimits\sup_{{\bsg\et}\in \cH_{\alpha}(M)}\inf_{\overline \bsg\in \bsS}\norm{\bsg\et-\overline \bsg}_{2}^{2}+\frac{V}{n}\cro{1+\log_{+}(n/V)}}\\
&\le C'\cro{ \kappa^{2} M^{2}D^{-2\alpha}+\frac{D+1}{n}\log(en)}.
\end{align*}
Let us set $L_{n}=\log(en)$. With our choice of $D\ge 1$, 
\[
D-1< \pa{\frac{\kappa^{2} M^{2}n}{L_{n}}}^{\frac{1}{1+2\alpha}}\le D
\]
hence $ \kappa^{2} M^{2}D^{-2\alpha}\le DL_{n}/n$, $D<1+(\kappa^{2} M^{2}n/L_{n})^{\frac{1}{1+2\alpha}}$ and the result follows from the inequalities
\begin{align*}
&\kappa^{2} M^{2}D^{-2\alpha}+\frac{(D+1)L_{n}}{n}\le 2\frac{DL_{n}}{n}+\frac{L_{n}}{n}\le 2\cro{\frac{(\kappa M)^{1/\alpha}L_{n}}{n}}^{\frac{2\alpha}{1+2\alpha}}+\frac{3L_{n}}{n}.
\end{align*}

\subsection{Proof of Proposition~\ref{prop-bi}}
Let ${a_{0}}$ be the middle of the interval $K$ of length $2\overline L$. Given $N\ge 1$, $L>0$ and $\eps\in\{-1,1\}^{2^{N}}$, we define $\bsg_{\eps}={a_{0}}+G_{\eps}$ with $G_{\eps}$ as  in Proposition~\ref{propert}. Provided that $L\le \overline L\wedge L_{0}$ with $L_{0}=2^{-[(N-1)\alpha+1]}M$, the functions $\bsg_{\eps}$ takes their values in $K\subset J$ and satisfies~\eref{def-Hold} and consequently belongs to $\cH_{\alpha}(M)$ for all $\eps\in\{-1,1\}^{2^{N}}$. Let $R_{\eps}=R_{\bsg_{\eps}}$ for all $\eps\in \{-1,1\}^{2^{N}}$ and, as in the proof of Proposition~\ref{prop-BI0}, set $P_{\bsg}=R_{\bsg}\cdot P_{W}$ and ${P_{\varepsilon}}=P_{\bsg_{\eps}}$ for short. Integrating the inequalities~\eref{eq-bsph} and~\eref{eq-bifh} with respect to $P_{W}$ and using that  for all $\eps,\eps'\in\{-1,1\}^{2^{N}}$, $\norm{G_{\eps}-G_{\eps'}}_{2}=\norm{\bsg_{\eps}-\bsg_{\eps'}}_{2}$ we obtain that
\[
c_{K}^{2}\norm{G_{\eps}-G_{\eps'}}_{2}^{2}\le h^{2}\pa{R_{\eps},R_{\eps'}}\le \kappa^{2}\norm{G_{\eps}-G_{\eps'}}_{2}^{2}.
\]
Since $P_{W}$ is the uniform distribution on $[0,1]$, by arguing as in the proof of Proposition~\ref{prop-BI0}
\[
\norm{G_{\eps}-G_{\eps'}}_{2}^{2}=4L^{2}2^{-N}\|g\|_{2}^{2}\delta(\eps,\eps')\quad \text{for all $\eps,\eps'\in\{-1,1\}^{2^{N}}$}
\]
and consequently, provided that $L$ satisfies 
\begin{equation}\label{eq-condLc2}
L\le \overline L\wedge L_{0}\wedge \pa{4\kappa \|g\|_{2}\sqrt{2^{-(N-1)}n}}^{-1}
\end{equation}
the family of probabilities $\sC=\{P_{\eps},\; \eps\in\{-1,1\}^{|\Lambda|}\}$ is a subset of $\cP=\{P_{\bsg},\; \bsg\in\cH_{\alpha}(M)\}$ that fulfils the assumptions of Lemma~\ref{assouad} with $d=2^{N}$, 
\[
\eta=4c_{K}^{2}L^{2}2^{-N}\|g\|_{2}^{2}\quad \text{and}\quad a=4n\kappa^{2}L^{2}2^{-N}\|g\|_{2}^{2}\le 1/8.
\]
We derive from~\eref{concAssouad} that
\begin{equation}\label{eq-LB00b}
\cR_{n}(\cH_{\alpha}(M))\ge \frac{c_{K}^{2}\|g\|_{2}^{2}L^{2}}{2}\pa{1-\sqrt{2a}}\ge  \frac{c_{K}^{2}\|g\|_{2}^{2}L^{2}}{4}.
\end{equation}
If $\kappa^{2}\norm{g}_{2}^{2}M^{2}n> 1/8$, we choose  $N\ge 2$ such that 
\[
2^{N}\ge \pa{2^{{2(2+\alpha)}}\kappa^{2}\norm{g}_{2}^{2}M^{2}n}^{1/(1+2\alpha)}>2^{N-1}
\]
and $N=1$ otherwise. In any case, our choice of $N$ satisfies
\[
L_{0}=2^{-[(N-1)\alpha+1]}M\le  \pa{4\kappa \|g\|_{2}\sqrt{2^{-(N-1)}n}}^{-1}.
\]
When $N\ge 2$, 
\begin{align*}
L_{0}^{2}&=2^{-2\alpha(N-1)-2}M^{2}\ge \frac{M^{2}}{4} \pa{2^{{2(2+\alpha)}}\kappa^{2}\norm{g}_{2}^{2}M^{2}n}^{-\frac{2\alpha}{1+2\alpha}}\\
&=\pa{\frac{M^{1/\alpha}}{2^{2\alpha+6+1/\alpha}\kappa^{2}\norm{g}_{2}^{2}n}}^{\frac{2\alpha}{1+2\alpha}}=L_{1}^{2},
\end{align*}
while $L_{0}=M/2$ when $N=1$. The choice $L=\overline L\wedge L_{1}\wedge (M/2)$ satisfies~\eref{eq-condLc2} and we deduce from the equalities 
\[
h^{2}(R_{\eps},R_{\eps'})=\int_{\sW}h^{2}\pa{R_{\bsg_{\eps}(w)},R_{\bsg_{\eps'}(w)}}dP_{W}(w)=h^{2}(P_{\eps},P_{\eps'})
\]
and~\eref{eq-LB00b} that 
\begin{align*}
\cR_{n}(\cH_{\alpha}(M))& \ge \frac{c_{K}^{2}\|g\|_{2}^{2}}{4}\cro{\pa{\frac{M^{1/\alpha}}{2^{2\alpha+6+1/\alpha}\kappa^{2}\norm{g}_{2}^{2}n}}^{\frac{2\alpha}{1+2\alpha}}\bigwedge \pa{\frac{M^{2}}{4}}\bigwedge \overline L^{2}}.
\end{align*}
The conclusion follows by {choosing} $g(x)=x\1_{[0,1/2]}+(1-x)\1_{[1/2,1]}$ which satisfies $\|g\|_{2}^{2}=1/12$.

\subsection{Proof of Proposition~\ref{prop-BI0}}
When the Poisson family is parametrized by the mean, given two functions $\bsg,\bsg'$ mapping $\sW=[0,1]$ into $J=(0,+\infty)$, The Hellinger-type distance $h^{2}(R_{\bsg},R_{\bsg'})$ can be written as 
\begin{equation}\label{h-L2-Poi}
h^{2}(R_{\bsg},R_{\bsg'})=\int_{\sW}\cro{1-e^{-\pa{\sqrt{\bsg(w)}-\sqrt{\bsg'(w)}}^{2}/2}}dP_{W}(w).
\end{equation}
Using that for all $x\in [0,1]$, $(1-e^{-1})x\le 1-e^{-x}\le x$, we deduce from~\eref{h-L2-Poi} that
\begin{equation}\label{eq-hL2}
{ \frac{1}{2}}(1-e^{-1})\norm{\sqrt{\bsg}-\sqrt{\bsg'}}_{2}^{2}\le h^{2}(R_{\bsg},R_{\bsg'})\le \frac{1}{2}\norm{\sqrt{\bsg}-\sqrt{\bsg'}}_{2}^{2}
\end{equation}
whenever $\norm{\sqrt{\bsg}-\sqrt{\bsg'}}_{\infty}\le 1$.  

Let $N$ be some positive integer, $L$ some positive number and $g$ a $1$-Lipschitz function supported on $[0,1]$ with values in $[-b,b]$. Let us set $\Lambda=\{0,\ldots,2^{N}-1\}$ and for $\eps\in\{-1,1\}^{|\Lambda|}$, $G_{\eps}$ the function defined by~\eref{def-G} and $\bsg_{\eps}=L+G_{\eps}$. Under our assumption on $g$, $\bsg_{\eps}$ takes its values in $[(1-b)L,(1+b)L]$ and by Proposition~\ref{propert}, $\bsg_{\eps}$ satisfies~\eref{def-Hold} provided that $L\le 2^{-[(N-1)\alpha+1]}M$. Hence, under the conditions $L\le 2^{-[(N-1)\alpha+1]}M$ and $b<1$, $\bsg_{\eps}$ belongs to $\cH_{\alpha}(M)$ for all  $\eps\in\{-1,1\}^{|\Lambda|}$. For all $\eps,\eps'\in\{-1,1\}^{|\Lambda|}$, 
\[
\frac{\ab{G_{\eps}-G_{\eps'}}}{2\sqrt{(1+b)L}}\le \ab{\sqrt{\bsg_{\eps}}-\sqrt{\bsg_{\eps'}}}=\frac{\ab{\bsg_{\eps}-\bsg_{\eps'}}}{\sqrt{\bsg_{\eps}}+\sqrt{\bsg_{\eps'}}}\le \frac{\ab{G_{\eps}-G_{\eps'}}}{2\sqrt{(1-b)L}},
\]
and %
\[
\ab{\sqrt{\bsg_{\eps}}-\sqrt{\bsg_{\eps'}}}\le \sqrt{(1+b)L}-\sqrt{(1-b)L}=\cro{\sqrt{1+b}-\sqrt{1-b}}\sqrt{L}.
\]
In particular, $\norm{\sqrt{\bsg_{\eps}}-\sqrt{\bsg_{\eps'}}}_{\infty}\le 1$ for 
\[
L\le \pa{\sqrt{1+b}-\sqrt{1-b}}^{-2}=\frac{1+\sqrt{1-b^{2}}}{2b^{2}}=L_{0}
\]
and, writing $R_{\eps}$ for $R_{\bsg_{\eps}}$ for short, it follows from~\eref{eq-hL2} that 
\begin{equation}\label{eq-hL20}
{\frac{(1-e^{-1})}{8(1+b)L}}\norm{G_{\eps}-G_{\eps'}}_{2}^{2}\le  h^{2}(R_{\eps},R_{\eps'})\le \frac{1}{8(1-b)L}\norm{G_{\eps}-G_{\eps'}}_{2}^{2}.
\end{equation}
Since $P_{W}$ is the uniform distribution and the supports of the functions $g_{k}:x\mapsto g(2^{N}x-k)$ for $k\in \Lambda$ are disjoint, we obtain that for all $\eps,\eps'\in\{-1,1\}^{|\Lambda|}$ 
\begin{align*}
\norm{G_{\eps}-G_{\eps'}}_{2}^{2}&=L^{2}\sum_{k\in \Lambda}\int_{I_{k}}\pa{\eps_{k+1}g_{k}(x)-\eps_{k+1}'g_{k}(x)}^{2}dx\\
&=L^{2}\sum_{k\in \Lambda}{\ab{\eps_{k+1}-\eps_{k+1}'}^{2}}\int_{I_{k}}g_{k}^{2}(x)dx={4}L^{2}2^{-N}\norm{g}_{2}^{2}\delta(\eps,\eps').
\end{align*}
Let us denote by $P_{\bsg}=R_{\bsg}\cdot P_{W}$ the probability associated to $R_{\bsg}$ and write $P_{\eps}$ for $P_{\bsg_{\eps}}$ for short.
We deduce from~\eref{eq-hL20} that provided that $L$ and $b$ satisfy
\begin{equation}\label{cond-L}
L\le \pa{2^{-[(N-1)\alpha+1]}M}\bigwedge \frac{1+\sqrt{1-b^{2}}}{2b^{2}}\bigwedge {{\frac{(1-b)2^{N-3}}{\norm{g}_{2}^{2}n}}},
\end{equation}
the family of probabilities $\sC=\{P_{\eps},\; \eps\in\{-1,1\}^{|\Lambda|}\}$ is a subset of $\{P_{\bsg},\; \bsg\in\cH_{\alpha}(M)\}$ that fulfils the assumptions of Assouad's lemma (Lemma~\ref{assouad}) with $d=|\Lambda|=2^{N}$, 
\[
\eta=\frac{(1-e^{-1})L2^{-(N+1)}\norm{g}_{2}^{2}}{1+b}\quad \text{and}\quad a={\frac{nL2^{-N}\norm{g}_{2}^{2}}{1-b}}\in [0,1/8].
\]
We derive from the equalities
\[
h^{2}(R_{\eps},R_{\eps'})=\int_{\sW}h^{2}\pa{R_{\bsg_{\eps}(w)},R_{\bsg_{\eps'}(w)}}dP_{W}(w)=h^{2}(P_{\eps},P_{\eps'})
\]
and~\eref{concAssouad} that
\begin{equation}\label{eq-LB00}
\cR_{n}(\cH_{\alpha}(M))\ge \frac{(1-e^{-1})\norm{g}_{2}^{2}L}{16(1+b)}\pa{1-\sqrt{2a}}\ge \frac{(1-e^{-1})\norm{g}_{2}^{2}L}{32(1+b)}.
\end{equation}
If $\norm{g}_{2}^{2}Mn> (1-b)/2$, we choose $N\ge 2$ such that
\begin{align*}
2^{N}\ge \cro{\frac{2^{2+\alpha}\norm{g}_{2}^{2}Mn}{1-b}}^{\frac{1}{1+\alpha}}>2^{N-1}.
\end{align*}
Otherwise, we choose $N=1$. Note that in any case, 
\[
2^{-[(N-1)\alpha+1]}M\le {\frac{(1-b)2^{N-3}}{n\norm{g}_{2}^{2}}}.
\]
Besides, if $N\ge 2$ 
\begin{align*}
2^{-[(N-1)\alpha+1]}M&=2^{-1}M 2^{-(N-1)\alpha}\ge 2^{-1}M \cro{\frac{2^{2+\alpha}\norm{g}_{2}^{2}Mn}{1-b}}^{-\frac{\alpha}{1+\alpha}}\\
&=\pa{\frac{(1-b)M^{\frac{1}{\alpha}}}{2^{3+\alpha+1/\alpha}\norm{g}_{2}^{2}n}}^{\frac{\alpha}{1+\alpha}}=L_{1}
\end{align*}
while for $2^{-[(N-1)\alpha+1]}M=M/2$ for $N=1$. Finally, we choose $L=L_{0}\wedge L_{1}\wedge (M/2)$, which satisfies~\eref{cond-L}, and we derive from ~\eref{eq-LB00} that  
\begin{align*}
 \lefteqn{\cR_{n}(\cH_{\alpha}(M))}\hspace{10mm}\\
 &\ge \frac{(1-e^{-1})\|g\|_{2}^{2}}{32(1+b)}\cro{\pa{\frac{(1-b)M^{\frac{1}{\alpha}}}{2^{3+\alpha+1/\alpha}\norm{g}_{2}^{2}n}}^{\frac{\alpha}{1+\alpha}}\wedge \frac{M}{2} \wedge \frac{1+\sqrt{1-b^{2}}}{b^{2}}}.
\end{align*}
The conclusion follows by taking $g(x)=x\1_{[0,1/2]}+(1-x)\1_{[1/2,1]}$ for which $b=1/2$ and $\|g\|_{2}^{2}=1/12$ .

\subsection{Proof of Proposition~\ref{prop-rhoEstPoisson}}
Let $\alpha\in (0,1]$ and $M>0$. For a function $\bsg\in\cH_{\alpha}(M)$ taking values in $J=(0,+\infty)$, we set $\gamma_{j}=D\int_{I_{j}}\bsg(w)dw$, for $j\in\{1,\ldots,D\}$ and $\overline\bsg=\sum_{j=1}^{D}\gamma_{j}\1_{I_{j}}$. As an immediate consequence, $\gamma_{j}\in J$ for all $j\in\{1,\ldots,D\}$ and $\overline\bsg\in\overline \bsS$. Since for all $w\in I_{j}$, with the fact that $\bsg\in\cH_{\alpha}(M)$ $$\left|\bsg(w)-\overline\bsg(w)\right|\le \sup_{|w-w'|\le 1/D}\left|\bsg(w)-\bsg(w')\right|\le MD^{-\alpha}$$ and $\bsS$ is dense in $\overline \bsS$ with respect to the supremum norm, we derive
\begin{align*}
\sup_{\bsg\in\cH_{\alpha}(M)}\inf_{\overline\bsg\in\bsS}\norm{\sqrt{\bsg}-\sqrt{\overline\bsg}}_{2}&\le \sup_{\bsg\in\cH_{\alpha}(M)}\inf_{\overline\bsg\in\bsS}\norm{\sqrt{\bsg}-\sqrt{\overline\bsg}}_{\infty}\\
&\le\sup_{\bsg\in\cH_{\alpha}(M)}\inf_{\overline\bsg\in\bsS}\sqrt{\norm{\bsg-\overline\bsg}_{\infty}}\\
&=\sup_{\bsg\in\cH_{\alpha}(M)}\inf_{\overline\bsg\in\overline\bsS}\sqrt{\norm{\bsg-\overline\bsg}_{\infty}}\\
&\le\sqrt{M}D^{-\frac{\alpha}{2}}.
\end{align*}

Using the fact that the data $\etc{X}$ are i.i.d. and $1-e^{-x}\le x$ for all $x\in[0,+\infty)$, we deduce that for all functions $\bsg$ and $\bsg'$ with values in $J=(0,+\infty)$,
\begin{align}
\gh^{2}(\gR_{\bsg},\gR_{\bsg'})=nh^{2}(R_{\bsg},R_{\bsg'})&=n\int_{\sW}\cro{1-e^{-\pa{\sqrt{\bsg(w)}-\sqrt{\bsg'(w)}}^{2}/2}}dP_{W}(w)\nonumber\\
&\le\frac{n}{2}\norm{\sqrt{\bsg}-\sqrt{\bsg'}}_{2}^{2}.\label{Poi-Con}
\end{align}
Applying Corollary~\ref{cor-general} with $V=D+1$ together with \eref{Poi-Con}, we obtain that 
\begin{align*}
\sup_{{\bsg\et}\in \cH_{\alpha}(M)}\E\cro{h^{2}(R_{\bsg\et},R_{\widehat \bsg})}&\le C'\cro{\adjustlimits \sup_{{\bsg\et}\in \cH_{\alpha}(M)}\inf_{\overline \bsg\in \bsS}h^{2}(R_{\bsg\et},R_{\overline \bsg})+\frac{V}{n}\cro{1+\log_{+}(n/V)}}\\
&\le C'\cro{\frac{1}{2}\adjustlimits\sup_{{\bsg\et}\in \cH_{\alpha}(M)}\inf_{\overline \bsg\in \bsS}\norm{\sqrt{\bsg\et}-\sqrt{\overline\bsg}}_{2}^{2}+\frac{V}{n}\cro{1+\log_{+}(n/V)}}\\
&\le C'\cro{\frac{1}{2}MD^{-\alpha}+\frac{D+1}{n}\log(en)}.
\end{align*}
Let us set $L_{n}=\log(en)$. With our choice of $D\ge 1$, 
\[
D-1< \pa{\frac{Mn}{2L_{n}}}^{\frac{1}{1+\alpha}}\le D
\]
hence $MD^{-\alpha}/2\le DL_{n}/n$, $D<1+\cro{Mn/(2L_{n})}^{1/(1+\alpha)}$ and the result follows from the inequalities
\begin{align*}
&\frac{MD^{-\alpha}}{2}+\frac{(D+1)L_{n}}{n}\le 2\frac{DL_{n}}{n}+\frac{L_{n}}{n}\le 2\cro{\frac{(M/2)^{1/\alpha}L_{n}}{n}}^{\frac{\alpha}{1+\alpha}}+\frac{3L_{n}}{n}.
\end{align*}

\bibliographystyle{apalike}

\end{document}